\def\rg{\hbox to 30pt{\rightarrowfill}}
\def\lg{\hbox to 30pt{\leftarrowfill}}
          \newtheorem{theorem}{Theorem}[section]
      \newtheorem{definition}[theorem]{Definition}
      \newtheorem{proposition}[theorem]{Proposition}
      \newtheorem{corollary}[theorem]{Corollary}
      \newtheorem{lemma}[theorem]{Lemma}
      \newcommand{\CC}{{\mathbb C}}
      \newcommand{\NN}{{\mathbb N}}
      \newcommand{\ZZ}{{\mathbb Z}}
      \newcommand{\DD}{{\mathbb D}}
      \newcommand{\RR}{{\mathbb R}}
      \newcommand{\FF}{{\mathbb F}}
      \newcommand{\TT}{{\mathbb T}}
      \newcommand{\cD}{{\mathcal D}}
      \newcommand{\cE}{{\mathcal E}}
      \newcommand{\cG}{{\mathcal G}}
      \newcommand{\cH}{{\mathcal H}}
      \newcommand{\cL}{{\mathcal L}}
      \newcommand{\cM}{{\mathcal M}}
      \newcommand{\cN}{{\mathcal N}}
      \newcommand{\cP}{{\mathcal P}}
      \newcommand{\cR}{{\mathcal R}}
      \newcommand{\rank}{\hbox{\rm{rank}}\,}
      \newdimen\expt
      \def\boxit#1{\setbox0\hbox{$\displaystyle{#1}$}
            \hbox{\lower.4\expt
       \hbox{\lower3\expt\hbox{\lower\dp0
            \hbox{\vbox{\hrule height.4\expt
       \hbox{\vrule width.4\expt\hskip3\expt
            \vbox{\vskip3\expt\box0\vskip2\expt}%
       \hskip3\expt\vrule width.4\expt}\hrule height.4\expt}}}}}}
\begin{document}
       \pagestyle{myheadings}
      \markboth{ Gelu Popescu}{  Euler characteristic on noncommutative  polyballs  }

      \title [   Euler characteristic on noncommutative   polyballs  ]
      {  Euler characteristic on noncommutative   polyballs }
        \author{Gelu Popescu}
\date{July 9, 2014 (revised version)}
      \thanks{Research supported in part by an NSF grant}
      \subjclass[2000]{Primary:  46L52;  32A70;  Secondary: 47A13; 47A15}
      \keywords{Noncommutative polyball;  Euler characteristic; Curvature invariant;  Berezin transform;
      Fock space; Creation operators; Invariant subspaces.
}

      \address{Department of Mathematics, The University of Texas
      at San Antonio \\ San Antonio, TX 78249, USA}
      \email{\tt gelu.popescu@utsa.edu}

\begin{abstract}
In  this paper we introduce  and study the Euler characteristic (denoted by $\chi$)   associated with  algebraic modules generated by  arbitrary elements of certain noncommutative polyballs.  We provide several asymptotic formulas for  $\chi$   and prove some of its basic properties. We show  that the Euler characteristic is a complete unitary invariant  for the finite rank Beurling type invariant subspaces of the tensor product of full Fock spaces $F^2(H_{n_1})\otimes \cdots \otimes F^2(H_{n_k})$, and  prove that  its range   coincides with the interval $[0,\infty)$.
 We obtain an analogue of Arveson's version of  the Gauss-Bonnet-Chern  theorem from Riemannian  geometry, which connects the curvature to the Euler characteristic. In particular, we prove that if $\cM$ is an  invariant subspace of  $F^2(H_{n_1})\otimes \cdots \otimes F^2(H_{n_k})$, $n_i\geq 2$, which is graded  (generated by multi-homogeneous polynomials),  then the curvature and the Euler characteristic of the orthocomplement  of $\cM$ coincide.
\end{abstract}

      \maketitle








\bigskip

\section*{Introduction}

Arveson  (\cite{Arv1}, \cite{Arv2}, \cite{Arv3}) introduced and studied the  curvature invariant and the Euler characteristic for commuting $n$-tuples of bounded linear operators $T=(T_1,\ldots, T_n)$ acting on a Hilbert space $\cH$ such that $T_1T_1^*+\cdots +T_nT_n^*\leq I$ and the defect operator $\Delta_T:=(I-T_1T_1^*-\cdots -T_nT_n^*)^{1/2}$ has finite rank. Based on certain asymptotic formulas for both the curvature and Euler characteristic, he  obtained, as the main result of the paper,  an operator-theoretic version of the Gauss-Bonnet-Chern formula of Riemannian geometry. More precisely, he proved that  for any graded (generated by homogeneous polynomials) invariant subspace $\cM$ of a finite direct sum of copies of  the symmetric Fock space $F_s^2(H_n)$  with $n$ generators, the curvature  and the Euler characteristic of the othocomplement  of  $\cM$ coincide. This result was generalized by Fang (\cite{Fang1}) to invariant subspaces generated by arbitrary polynomials. Fang also obtained a version of  the Gauss-Bonnet-Chern formula for the Hardy space  $H^2(\DD^n)$  over the polydisc(\cite{Fang}) and for the Dirichlet space over the unit disc (\cite{Fang2}, \cite{R}). The theory of Arveson's curvature on the symmetric Fock space   was significantly expanded  due to the work  by Greene, Richter, and Sundberg  \cite{GRS},  Fang  \cite{Fang1}, and   Gleason, Richter, and Sundberg  \cite{GlRS}.

In the noncommutative setting,  the author (\cite{Po-curvature}, \cite{Po-varieties}) and, independently,
Kribs \cite{Kr} defined and studied the curvature and Euler characteristic  for
arbitrary  elements $T$ with
  rank\,$\Delta_T<\infty$ in the unit ball
 $$
 [B(\cH)^{n}]_1^-:=\left\{
(X_{1},\ldots, X_{n})\in B(\cH)^{n}:\ X_{1} X_{1}^*+\cdots +X_{n}X_{n}^*\leq  I
 \right\}
$$
and, in particular,  for the invariant (resp.~coinvariant) subspaces of the  full Fock space  $F^2(H_n)$ with $n$ generators. Some of these results were extended by  Muhly and Solel \cite{MuSo4} to a class of completely positive maps on semifinite factors.
  A noncommutative analogue  (for the full Fock space)  of the  Arveson's version of  the Gauss-Bonnet-Chern  theorem was obtained in \cite{Po-curvature}.

In \cite{Po-curvature-polyballs},  we  developed a theory of curvature invariant and  multiplicity invariant
  for  tensor products of full
Fock spaces $F^2(H_{n_1})\otimes \cdots \otimes F^2(H_{n_k})$  and
also for   tensor products  of   symmetric Fock spaces
$F_s^2(H_{n_1})\otimes \cdots \otimes F_s^2(H_{n_k})$.  To prove the existence of the curvature and its basic properties in these  settings   required a new approach  based on noncommutative Berezin transforms and  multivariable operator theory on polyballs and varieties (see \cite{Po-poisson}, \cite{Po-automorphism}, \cite{Po-Berezin-poly}, and \cite{Po-Berezin3}), and also certain summability results for completely positive maps which are trace contractive.
In particular, we obtained new proofs for the existence  of the curvature on  the full Fock space $F^2(H_{n})$,   the Hardy space $H^2(\DD^k)$ (which corresponds
    to $n_1=\cdots=n_k=1$), and the symmetric Fock space $F_s^2(H_{n})$.

In  the present paper, which  is a continuation of \cite{Po-curvature-polyballs},  we introduce  and study the Euler characteristic  associated with the elements of a class of noncommutative polyballs, and obtain an analogue of Arveson's version of  the Gauss-Bonnet-Chern  theorem from Riemannian  geometry, which connects the curvature to the Euler characteristic of some associated  algebraic modules.

To present our results we need a few preliminaries.
Throughout this paper, we denote by $B(\cH)$ the algebra of bounded
linear operators on a Hilbert space $\cH$.
Let $B(\cH)^{n_1}\times_c\cdots \times_c B(\cH)^{n_k}$ be
   the set of all tuples  ${\bf X}:=({ X}_1,\ldots, { X}_k)$ in $B(\cH)^{n_1}\times\cdots \times B(\cH)^{n_k}$
     with the property that the entries of ${X}_s:=(X_{s,1},\ldots, X_{s,n_s})$  are commuting with the entries of
      ${X}_t:=(X_{t,1},\ldots, X_{t,n_t})$  for any $s,t\in \{1,\ldots, k\}$, $s\neq t$.
  Note that, for each $i\in \{1,\ldots,k\}$,  the operators $X_{i,1},\ldots, X_{i,n_i}$ are not necessarily commuting.
  Let ${\bf n}:=(n_1,\ldots, n_k)\in \NN^k$, where $n_i \in\NN:=\{1,2,\ldots\}$ and $i\in \{1,\ldots, k\}$, and define  the {\it regular polyball}
$$
{\bf B_n}(\cH):=\left\{ {\bf X}=(X_1,\ldots, X_k)\in B(\cH)^{n_1}\times_c\cdots \times_c B(\cH)^{n_k}: \ {\bf \Delta_{X}^p}(I)\geq 0 \ \text{ for }\ {\bf 0}\leq {\bf p}\leq (1,\ldots,1)\right\},
$$
where
 the {\it defect mapping} ${\bf \Delta_{X}^p}:B(\cH)\to  B(\cH)$ is defined by
$$
{\bf \Delta_{X}^p}:=\left(id -\Phi_{X_1}\right)^{p_1}\circ \cdots \circ\left(id -\Phi_{ X_k}\right)^{p_k}, \qquad p_i\in\{0,1\},
$$
 and
$\Phi_{X_i}:B(\cH)\to B(\cH)$  is the completely positive linear map defined by   $\Phi_{X_i}(Y):=\sum_{j=1}^{n_i}   X_{i,j} Y X_{i,j} ^*$ for $Y\in B(\cH)$.  We use the convention that $(id-\Phi_{f_i,X_i})^0=id$.
 For information on completely bounded (resp. positive) maps we refer to \cite{Pa-book}.

Let $H_{n_i}$ be
an $n_i$-dimensional complex  Hilbert space.
  We consider the full Fock space  of $H_{n_i}$ defined by
$$F^2(H_{n_i}):=\CC1\oplus \bigoplus_{p\geq 1} H_{n_i}^{\otimes p},$$
where $H_{n_i}^{\otimes p}$ is the
(Hilbert) tensor product of $p$ copies of $H_{n_i}$. Let $\FF_{n_i}^+$ be the unital free semigroup on $n_i$ generators
$g_{1}^i,\ldots, g_{n_i}^i$ and the identity $g_{0}^i$.
  Set $e_\alpha^i :=
e^i_{j_1}\otimes \cdots \otimes e^i_{j_p}$ if
$\alpha=g^i_{j_1}\cdots g^i_{j_p}\in \FF_{n_i}^+$
 and $e^i_{g^i_0}:= 1\in \CC$.
Note that $\{e^i_\alpha:\alpha\in\FF_{n_i}^+\}$ is an orthonormal
basis of $F^2(H_{n_i})$.  The length of $\alpha\in
\FF_{n_i}^+$ is defined by $|\alpha|:=0$ if $\alpha=g_0^i$  and
$|\alpha|:=p$ if
 $\alpha=g_{j_1}^i\cdots g_{j_p}^i$, where $j_1,\ldots, j_p\in \{1,\ldots, n_i\}$.

 For each $  n_i\in \NN$, $i\in\{1,\ldots,k\}$, and  $j\in\{1,\ldots,n_i\}$, let  $S_{i,j}:F^2(H_{n_i})\to
F^2(H_{n_i})$   be
 the {\it left creation  operator}  defined  by
$
S_{i,j} e_\alpha^i:=  e^i_{g_j \alpha}$, $ \alpha\in \FF_{n_i}^+$,
 and define
 the operator ${\bf S}_{i,j}$ acting on the tensor Hilbert space
$F^2(H_{n_1})\otimes\cdots\otimes F^2(H_{n_k})$ by setting
$${\bf S}_{i,j}:=\underbrace{I\otimes\cdots\otimes I}_{\text{${i-1}$
times}}\otimes S_{i,j}\otimes \underbrace{I\otimes\cdots\otimes
I}_{\text{${k-i}$ times}}.
$$
 The $k$-tuple ${\bf S}:=({\bf S}_1,\ldots, {\bf S}_k)$, where  ${\bf S}_i:=({\bf S}_{i,1},\ldots,{\bf S}_{i,n_i})$, is  an element  in the
regular polyball $ {\bf B_n}(\otimes_{i=1}^kF^2(H_{n_i}))$ and it
  is  called  {\it universal model} associated
  with the abstract
  polyball ${\bf B_n}:=\{{\bf B_n}(\cH):\ \cH \text{ is a Hilbert space}\}$. The noncommutative Hardy algebra $F^\infty({\bf B_n})$ is the sequential SOT-(resp. WOT-, $w^*$-) closure of all polynomials in ${\bf S}_{i,j}$ and the identity.

Let  ${\bf T}=({ T}_1,\ldots, { T}_k)\in {\bf B_n}(\cH)$ with $T_i:=(T_{i,1},\ldots, T_{i,n_i})$.
We  use the notation $T_{i,\alpha_i}:=T_{i,j_1}\cdots T_{i,j_p}$
  if $\alpha_i\in \FF_{n_i}^+$ and $\alpha_i=g_{j_1}^i\cdots g_{j_p}^i$, and
   $T_{i,g_0^i}:=I$.
The {\it noncommutative Berezin kernel} associated with any element
   ${\bf T}$ in the noncommutative polyball ${\bf B_n}(\cH)$ is the operator
   $${\bf K_{T}}: \cH \to F^2(H_{n_1})\otimes \cdots \otimes  F^2(H_{n_k}) \otimes  \overline{{\bf \Delta_{T}}(I) (\cH)}$$
   defined by
   $$
   {\bf K_{T}}h:=\sum_{\beta_i\in \FF_{n_i}^+, i=1,\ldots,k}
   e^1_{\beta_1}\otimes \cdots \otimes  e^k_{\beta_k}\otimes {\bf \Delta_{T}}(I)^{1/2} T_{1,\beta_1}^*\cdots T_{k,\beta_k}^*h,\qquad h\in \cH,
   $$
where the defect operator is defined by
$
{\bf \Delta_{T}}(I)  :=(id-\Phi_{T_1})\cdots (id-\Phi_{T_k})(I).
$
The noncommutative Berezin kernel  ${\bf K_{T}}$ is a contraction  and
    ${\bf K_{T}} { T}^*_{i,j}= ({\bf S}_{i,j}^*\otimes I)  {\bf K_{T}}
    $
    for any $i\in \{1,\ldots, k\}$ and $j\in \{1,\ldots, n_i\}$.
More about the theory of noncommutative Berezin kernels on polybals and polydomains can be found in \cite{Po-poisson}, \cite{Po-domains}, \cite{Po-automorphism}, \cite{Po-Berezin-poly},  and \cite{Po-Berezin3}.

  In a recent paper \cite{Po-curvature-polyballs}, we  introduced the {\it curvature}   of any   element  ${\bf T}\in  {\bf B_n}(\cH)$  with trace class defect, i.e. $\text{\rm trace\,}[{\bf \Delta_{T}}(I)]<\infty$,  by setting
\begin{equation*}
\text{\rm curv}\,({\bf T}):=
\lim_{m\to\infty}\frac{1}{\left(\begin{matrix} m+k\\ k\end{matrix}\right)}\sum_{{q_1\geq 0,\ldots, q_k\geq 0}\atop {q_1+\cdots +q_k\leq m}} \frac{\text{\rm trace\,}\left[ {\bf K_{T}^*} (P_{q_1}^{(1)}\otimes \cdots \otimes P_{q_k}^{(k)}\otimes I_\cH){\bf K_{T}}\right]}{\text{\rm trace\,}\left[P_{q_1}^{(1)}\otimes \cdots \otimes P_{q_k}^{(k)}\right]},
\end{equation*}
where ${\bf K_T}$ is the Berezin kernel of ${\bf T}$ and  $P_{q_i}^{(i)}$ is the orthogonal projection of the full Fock space $F^2(H_{n_i})$ onto the span of all vectors $e_{\alpha_i}^i$ with  $\alpha\in  \FF_{n_i}^+$ and $|\alpha_i|=q_i$.
The curvature is a unitary invariant  that measures how far  ${\bf T}$ is  from being  ``free'', i.e. a  multiple of the universal model ${\bf S}$. We proved the existence of the curvature and  established many asymptotic formulas and its basic properties. These results  were used to develop a theory of curvature  (resp.~multiplicity) invariant
  for  tensor products of full
Fock spaces $F^2(H_{n_1})\otimes \cdots \otimes F^2(H_{n_k})$  and
also for   tensor products  of   symmetric Fock spaces
$F_s^2(H_{n_1})\otimes \cdots \otimes F_s^2(H_{n_k})$.

Throughout the present paper, unless otherwise specified, we assume that ${\bf n}:=(n_1,\ldots, n_k)\in \NN^k$ with $n_i\geq 2$. Let ${\bf T}=({ T}_1,\ldots, {T}_k)\in B(\cH)^{n_1}\times_c\cdots \times_c B(\cH)^{n_k}$ be  a $k$-tuple such that  its defect operator
${\bf \Delta_{T}}(I)$ has  finite rank.
For each ${\bf q}=(q_1,\ldots, q_k)\in \ZZ_+^k$, where $\ZZ_+:=\{0\}\cup \NN$, we define the linear manifold
$$
M_{\bf q}({\bf T}):=\text{\rm span}\,\left\{T_{1,\alpha_1}\cdots T_{k,\alpha_k}h: \  \alpha_i\in \FF_{n_i}^+, |\alpha_i|\leq q_i,  h\in {\bf \Delta_{T}}(I) (\cH)\right\}.
$$
In Section 1, we introduce the {\it Euler characteristic }   of ${\bf T}$ by setting
\begin{equation*}
\chi({\bf T}):=
 \lim_{m\to\infty}  \frac{\dim M_{{\bf q}^{(m)}}({\bf T})}{\prod_{i=1}^k (1+ n_i+\cdots  + n_i^{q_i^{(m)}})},
\end{equation*}
where   ${\bf q}^{(m)}=({q}^{(m)}_1,\ldots, { q}^{(m)}_k)$ is a  cofinal sequence in $\ZZ_+^k$. We show the Euler characteristic exists and provide several asymptotic formulas including the following:
\begin{equation*}\begin{split}
\chi({\bf T})&=
 \lim_{{\bf q}=(q_1,\ldots, q_k)\in \ZZ_+^k}
\frac{\rank\left[ {\bf K_{T}^*}(P_{\leq(q_1,\ldots, q_k)}  \otimes I){\bf K_{T}}\right]}{\rank\left[P_{\leq(q_1,\ldots, q_k)}\right]}\\
&= \lim_{{\bf q}=(q_1,\ldots, q_k)\in \ZZ_+^k}\frac{\rank \left[(id-\Phi_{T_1}^{q_1+1})\circ \cdots \circ (id-\Phi_{T_k}^{q_k+1})(I)\right]}
{\prod_{i=1}^k(1+n_i+\cdots + n_i^{q_i})},
\end{split}
\end{equation*}
where ${\bf K_T}$ is the Berezin kernel   and $\Phi_{T_1},\ldots, \Phi_{T_k}$ are the  completely positive maps associated with ${\bf T}$. It turns out that $$
0\leq \text{\rm curv}({\bf T})\leq  \chi({\bf T})\leq \rank [{\bf \Delta_{T}}(I)].
$$
We remark that the condition $n_i\geq 2$ for all $i\in \{1,\ldots, k\}$ is needed to prove the existence of the Euler characteristic (see  the proof of Theorem \ref{Eul}). However, there are indications that the result remains true if one ore more of the $n_i$'s assume the value one. For now, this remains an open problem. We should add the fact that the curvature invariant exists  if $n_i\geq 1$ (see \cite{Po-curvature-polyballs}). On the other hand, an important question that remains open is whether one can drop the condition that $T_{i,j}$ commutes with $T_{s,t}$ for $i\neq s$.

We say that $\cM$
 is an invariant subspace of the tensor product $F^2(H_{n_1})\otimes \cdots \otimes F^2(H_{n_k})\otimes \cH$  or  that $\cM$  is    invariant  under  ${\bf S}\otimes I_\cH$ if it is invariant under each operator
  ${\bf S}_{i,j}\otimes I_\cH$. When $({\bf S}\otimes I_\cH)|_\cM$ has finite rank defect, we say that $\cM$ has finite rank.
Given two invariant subspaces $\cM$ and $\cN$ under ${\bf S}\otimes I_\cH$,  we say that they are unitarily equivalent if there is a unitary operator $U:\cM\to \cN$ such that $U({\bf S}_{i,j}\otimes I_\cH)|_\cM=({\bf S}_{i,j}\otimes I_\cH)|_\cN U$.

In Section 2,  we present a more direct and more transparent proof for the characterization of the  invariant subspaces of $\otimes_{i=1}^kF^2(H_{n_i})$ with positive defect operator, i.e $\Delta_\cM:={\bf \Delta_{S\otimes {\it I}_\cH}}(P_\cM)\geq 0$.  This result complements the corresponding one from \cite{Po-Berezin-poly} and   brings new insight concerning the structure of the invariant subspaces of the tensor product $\otimes_{i=1}^kF^2(H_{n_i})$. The invariant subspaces with positive defect operators are the so-called {\it Beurling type invariant subspaces}.

We show  that  the Euler characteristic completely classifies the
 finite rank  Beurling type invariant subspaces  of ${\bf S}\otimes I_\cH$  which do not contain reducing subspaces. In particular, the Euler characteristic classifies  the  finite rank  Beurling type invariant subspaces  of $ F^2(H_{n_1})\otimes \cdots \otimes  F^2(H_{n_k})$ (see Theorem \ref{classification}).

Let $\cM$ be an invariant subspace of the tensor product $F^2(H_{n_1})\otimes\cdots\otimes F^2(H_{n_k})\otimes \cE$, where $\cE$ is a finite dimensional Hilbert space. We introduce  the Euler characteristic   of  the orthocomplement $\cM^\perp$ by setting
\begin{equation*}
\begin{split}
\chi(\cM^\perp)&:=\lim_{q_1\to\infty}\cdots\lim_{q_k\to\infty}
\frac{\rank\left[P_{\cM^\perp}(P_{\leq(q_1,\ldots, q_k)}\otimes I_\cE)  \right]}{\rank\left[P_{\leq(q_1,\ldots, q_k)}\right]},
\end{split}
\end{equation*}
where $P_{\leq(q_1,\ldots, q_k)}$  is the orthogonal projection of $\otimes_{i=1}^k F^2(H_{n_i})$ onto the  span of all vectors of the form $e_{\alpha_1}^1\otimes\cdots \otimes e_{\alpha_k}^k$, where $\alpha_i\in \FF_{n_i}^+$, $|\alpha_i|\leq q_i$.
In Section 2, we show that the Euler characteristic of  $\cM^\perp$ exists and satisfies the equation
 \begin{equation*}
\begin{split}
\chi(\cM^\perp)=\chi({\bf M}),
\end{split}
\end{equation*}
where  ${\bf M}$ is the compression of ${\bf S}\otimes I_\cE$ to the orthocomplement of $\cM$.  This is used to show that for any $t\in [0,1]$ there is an invariant subspace $\cM(t)$ of $F^2(H_{n_1})\otimes\cdots\otimes F^2(H_{n_k})$ such that $\chi(\cM(t)^\perp)=t$ and, consequently, the range of the Euler characteristic coincides with the interval $[0,\infty)$. If $k=1$, the corresponding result was proved in \cite{Po-curvature} and \cite{Kr}.
Moreover, if $k\geq 2$,  we prove that  for each $t\in(0,1)$,  there exists an uncountable family $\{T^{(\omega)}(t)\}_{\omega\in \Omega}$ of  non-isomorphic pure elements  of rank  one defect  in the regular polyball  such that
$$
\chi(T^{(\omega)}(t))=t, \qquad  \text{ for all } \omega\in \Omega.
$$

 In Section 3,  we provide a characterization of the graded invariant subspaces of  $F^2(H_{n_1})\otimes\cdots\otimes F^2(H_{n_k})$ with positive defect operator $\Delta_\cM$.
We  also prove that if $\cM$ is any graded  invariant subspace of the tensor product $F^2(H_{n_1})\otimes\cdots\otimes F^2(H_{n_k})$, then
 \begin{equation*}
\begin{split}
\lim_{q_1\to\infty}\cdots\lim_{q_k\to\infty}
\frac{\rank\left[P_{\cM^\perp}P_{\leq(q_1,\ldots, q_k)}  \right]}{\rank\left[P_{\leq(q_1,\ldots, q_k)}\right]}
=\lim_{q_1\to\infty}\cdots\lim_{q_k\to\infty}
\frac{\text{\rm trace}\,\left[P_{\cM^\perp}P_{\leq(q_1,\ldots, q_k)}  \right]}{\text{\rm trace}\, \left[P_{\leq(q_1,\ldots, q_k)}\right]}.
\end{split}
\end{equation*}
This is equivalent to
$$\chi(P_{\cM^\perp}{\bf S}|_{\cM^\perp})
=\text{\rm curv}\,(P_{\cM^\perp}{\bf S}|_{\cM^\perp}),
$$
which is  an analogue of Arveson's version \cite{Arv2}  of the  Gauss-Bonnet-Chern theorem from Riemannian geometry in our setting.
We recall
\cite{Po-Berezin-poly} that any  pure element with rank-one defect in the polyball ${\bf B}_{\bf n}(\cH)$ has the form $P_{\cM^\perp}{\bf S}|_{\cM^\perp}$, where
$\cM$ is an  invariant subspace of the tensor product $F^2(H_{n_1})\otimes\cdots\otimes F^2(H_{n_k})$. It remains an open problem whether, as in the commutative setting (\cite{Fang1}, \cite{Fang2}), the result above holds true for any invariant subspace $\cM$ generated by arbitrary polynomials.  We mention that  a version of the Gauss-Bonnet-Chern theorem for graded pure elements of finite rank  defect in the noncommutative polyball is also obtained.
Finally, we remark that  the results of this paper  can be  re-formulated in terms of Hilbert modules \cite{DoPa} over  the complex semigroup algebra $\CC[\FF_{n_1}^+\times \cdots \times \FF_{n_k}^+]$ generated by  direct product of  free semigroups. In this setting, the Hilbert module associated with the universal model  of the abstract polyball ${\bf B_n}$ plays the role of rank-one free module in the algebraic theory \cite{K}.

\bigskip

 \section{Euler characteristic on noncommutative polyballs}

In this section we prove the existence of the Euler characteristic of any element ${\bf T}$ in the noncommutative polyball and provide several asymptotic formulas  in terms of the noncommutative Berezin kernel and the completely positive maps associated with ${\bf T}\in {\bf B_n}(\cH)$. Basic properties of the Euler characteristic are also proved.

Let  ${\bf T}=({ T}_1,\ldots, { T}_k)\in B(\cH)^{n_1}\times_c\cdots \times_c B(\cH)^{n_k} $ with $T_i:=(T_{i,1},\ldots, T_{i,n_i})$, and let $\cD\subset \cH$ be a finite dimensional subspace.  For each ${\bf q}=(q_1,\ldots, q_k)\in \ZZ_+^k$, where $\ZZ_+=\{0,1,\ldots\}$, we define
$$
M_{\bf q}({\bf T},\cD):=\text{\rm span}\,\left\{T_{1,\alpha_1}\cdots T_{k,\alpha_k}h: \  \alpha_i\in \FF_{n_i}^+, |\alpha_i|\leq q_i,  h\in \cD\right\}.
$$
We also use the notation $M_{(q_1,\ldots, q_k)}$ when ${\bf T}$ and  $\cD$ are  understood and we want to emphasis the coordinates $q_1,\ldots q_k$.
Given two $k$-tuples ${\bf q}=(q_1,\ldots, q_k)$ and ${\bf p}=(p_1,\ldots, p_k)$ in $\ZZ_+^k$, we consider the partial order ${\bf q}\leq {\bf p}$ defined by $q_i\leq p_i$ for any $i\in \{1,\ldots, k\}$. We consider $\ZZ_+^k$ as a directed set with respect to this partial order.

\begin{theorem} \label{Eul} Let $\cD\subset \cH$ be a finite dimensional subspace and let  ${\bf T}\in B(\cH)^{n_1}\times_c\cdots \times_c B(\cH)^{n_k}$, where
$n_i\geq 2$ for any $i\in \{1,\ldots,k\}$. Then the following iterated limits exist and are  equal
$$
 \lim_{q_{\sigma(1)}\to\infty}\cdots\lim_{q_{\sigma(k)}\to\infty} \frac{\dim M_{{\bf q}}({\bf T},{\cD})}{n_1^{q_1}\cdots n_k^{q_k}},
$$
where  $\sigma$  is any  permutation of $\{1,\ldots, k\}$ and ${\bf q}=(q_1,\ldots, q_k)\in \ZZ_+^k$. Moreover, these limits coincide with
$$
\lim_{{\bf q}=(q_1,\ldots, q_k)\in \ZZ_+^k} \frac{\dim M_{{\bf q}}({\bf T},{\cD})}{n_1^{q_1}\cdots n_k^{q_k}}.
$$
\end{theorem}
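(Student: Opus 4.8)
The plan is to establish the existence of the iterated limits first, and then show they all agree with the net limit over the directed set $\ZZ_+^k$. The key structural observation is that $\dim M_{\bf q}({\bf T},\cD)$ is monotone nondecreasing in each coordinate $q_i$ (enlarging any $q_i$ only adds generators to the spanning set), and is bounded above. A natural upper bound comes from counting the spanning words: the number of multi-indices $(\alpha_1,\ldots,\alpha_k)$ with $|\alpha_i|\le q_i$ is exactly $\prod_{i=1}^k(1+n_i+\cdots+n_i^{q_i})$, so $\dim M_{\bf q}({\bf T},\cD)\le (\dim\cD)\prod_{i=1}^k(1+n_i+\cdots+n_i^{q_i})$. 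Dividing by $n_1^{q_1}\cdots n_k^{q_k}$ and using that $1+n_i+\cdots+n_i^{q_i}=\frac{n_i^{q_i+1}-1}{n_i-1}\le \frac{n_i}{n_i-1}\,n_i^{q_i}$ (this is exactly where $n_i\ge 2$ is used, so that $\frac{n_i}{n_i-1}$ is finite) gives a uniform bound on the quotient by $(\dim\cD)\prod_{i=1}^k\frac{n_i}{n_i-1}$.

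First I would fix a permutation $\sigma$ and run the iterated limit from the inside out. For the innermost variable $q_{\sigma(k)}$, with all other coordinates frozen, the sequence $q_{\sigma(k)}\mapsto \dim M_{\bf q}/n_{\sigma(k)}^{q_{\sigma(k)}}$ need not be monotone, so I cannot simply invoke monotone convergence on the quotient. The right move is to work with the increments: set $d_{\bf q}:=\dim M_{\bf q}$ and analyze how $d_{\bf q}$ grows as one coordinate increases. The crucial estimate is a near-multiplicative bound — passing from $q_i$ to $q_i+1$ multiplies the number of available words in the $i$-th slot by roughly $n_i$, and one must show the dimension grows by a comparable factor in the limit. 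Concretely, I expect to prove that $\lim_{q_i\to\infty} d_{(\ldots,q_i,\ldots)}/n_i^{q_i}$ exists for fixed other coordinates by establishing that the normalized increments form a Cauchy sequence, using the subadditivity/superadditivity relations coming from $M_{(\ldots,q_i+1,\ldots)}\supseteq \sum_{j} T_{i,j}M_{(\ldots,q_i,\ldots)}+M_{(\ldots,q_i,\ldots)}$.

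The hard part will be controlling the interchange of the iterated limits and identifying them with the net limit. **I would** handle this by a squeeze argument: the net limit $\lim_{{\bf q}\in\ZZ_+^k} d_{\bf q}/(n_1^{q_1}\cdots n_k^{q_k})$, \emph{if it exists}, automatically equals every iterated limit, since cofinal approach along coordinate axes is a special case of approach in the directed set. So the real content is to prove the net converges. For this I would show the normalized quantity is, up to vanishing error, independent of the \emph{path} of approach, by comparing $d_{\bf q}$ at two nearby multi-indices and bounding the discrepancy. The uniform bound from the first paragraph gives relative compactness (the quotients lie in a fixed interval), so it suffices to show all subnet limits coincide; combining the per-coordinate limit existence from the second paragraph with the monotonicity in each coordinate lets me sandwich $d_{\bf q}/(\prod n_i^{q_i})$ between quantities whose limits match, forcing convergence of the net.

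Finally, once the net limit $L$ is shown to exist, **I would** close the argument by noting that for any permutation $\sigma$ the iterated limit is squeezed between $\liminf$ and $\limsup$ of the net along the relevant cofinal directions, both of which equal $L$; hence every iterated limit equals $L$ as well. **The main obstacle** I anticipate is the per-coordinate limit existence in the absence of monotonicity of the quotient — establishing the Cauchy property of the normalized dimension sequence in a single variable, which is precisely the step where the geometric-series estimate and the hypothesis $n_i\ge 2$ are indispensable, and where the net-vs-iterated subtlety is genuinely resolved rather than merely reformulated.
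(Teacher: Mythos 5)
Your overall architecture is the same as the paper's: normalize by $n_1^{q_1}\cdots n_k^{q_k}$, establish the uniform bound $\dim M_{\bf q}({\bf T},\cD)\le(\dim\cD)\prod_{i=1}^k(1+n_i+\cdots+n_i^{q_i})$, prove single-variable convergence by controlling increments with a geometric series (the place where $n_i\ge2$ enters), iterate, and then reconcile the iterated limits with the net limit. But the one concrete tool you write down at the crux is not strong enough. The relation $M_{(\ldots,q_i+1,\ldots)}= M_{(\ldots,q_i,\ldots)}+\sum_{j}T_{i,j}M_{(\ldots,q_i,\ldots)}$ only yields $\dim M_{(\ldots,q_i+1,\ldots)}\le(1+n_i)\dim M_{(\ldots,q_i,\ldots)}$, hence, for $x_{q_i}:=\dim M_{\bf q}/n_i^{q_i}$, the increment bound $(x_{q_i+1}-x_{q_i})^+\le x_{q_i}/n_i$, which is bounded but not summable and gives no Cauchy property. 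The decomposition that actually works, and that the paper uses, zeroes out the $i$-th coordinate in the ``constant'' term: $M_{(q_1,\ldots,q_i,\ldots,q_k)}=M_{(q_1,\ldots,0,\ldots,q_k)}+\sum_{j=1}^{n_i}T_{i,j}M_{(q_1,\ldots,q_i-1,\ldots,q_k)}$ (valid because $T_{i,j}$ commutes past the factors with index $s\neq i$). Since $\dim M_{(q_1,\ldots,0,\ldots,q_k)}$ does not grow with $q_i$, this gives $(x_{q_i}-x_{q_i-1})^+\le n_i^{-q_i}\prod_{s\ne i}(1+\cdots+n_s^{q_s})\dim\cD$, which is geometrically summable; nonnegativity of $x_{q_i}$ then forces the negative parts of the increments to be summable as well, so the sequence has bounded variation and converges. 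Without this sharper recursion, the obstacle you correctly name as the main one is not actually overcome.

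The second place where real work is missing is the passage from the iterated limits to the net limit (and the equality of the iterated limits over different permutations). The paper's device for both is a one-sided almost-monotonicity obtained by telescoping the same increment bound along a lattice path: for ${\bf p}\ge{\bf q}$ one gets $a_{\bf p}\le a_{\bf q}+\epsilon({\bf q})$, where $a_{\bf q}:=\dim M_{\bf q}/(n_1^{q_1}\cdots n_k^{q_k})$ and $\epsilon({\bf q})$ is a tail of a geometric series; one then argues by contradiction using cofinal sets on which $a_{\bf q}$ would be too large or too small. Your ``sandwich between quantities whose limits match'' does not identify this estimate, and monotonicity of $\dim M_{\bf q}$ alone cannot produce it, since $a_{\bf q}$ is monotone in neither direction. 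One logical point you handle correctly but should keep in the right order: net convergence does not by itself give the iterated limits (slices with some coordinates frozen are not cofinal in $\ZZ_+^k$), so the existence of every inner limit must be proved first, and only then does the comparison argument identify all the limits with one another.
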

\begin{proof} If ${\bf q}=(q_1,\ldots, q_k)\in \ZZ_+^k$, we use the notation
$M_{(q_1,\ldots, q_k)}:=M_{{\bf q}}({\bf T},{\cD})$.
For  $i\in \{1,\ldots, k\}$,  note that
\begin{equation*}
M_{(q_1,\ldots, q_k)}=M_{(q_1,\ldots q_{i-1}, 0, q_{i+1}\ldots, q_k)} +
T_{i,1}M_{(q_1,\ldots q_{i-1}, q_i-1, q_{i+1}\ldots, q_k)}+\cdots + T_{i,n_i}M_{(q_1,\ldots q_{i-1}, q_i-1, q_{i+1}\ldots, q_k)}.
\end{equation*}
Consequently, we deduce that

\begin{equation} \label{ni}
\dim M_{(q_1,\ldots, q_k)}\leq  n_i \dim M_{(q_1,\ldots q_{i-1}, q_i-1, q_{i+1}\ldots, q_k)} +\dim M_{(q_1,\ldots q_{i-1}, 0, q_{i+1}\ldots, q_k)}
 \end{equation}
for any $(q_1,\ldots, q_k)\in \ZZ_+^k$. Iterating this inequality, we obtain
\begin{equation} \label{ni2}
\dim M_{(q_1,\ldots, q_k)}\leq (1+n_i+\cdots + n_i^{q_i}) \dim M_{(q_1,\ldots q_{i-1}, 0, q_{i+1}\ldots, q_k)},
\end{equation}
which implies
\begin{equation} \label{ni3}
\frac{\dim M_{(q_1,\ldots, q_k)}}{\prod_{i=1}^k (1+n_i+\cdots + n_i^{q_i}) }
\leq \dim M_{(0,\ldots, 0)}=\dim \cD
\end{equation}
for any $(q_1,\ldots, q_k)\in \ZZ_+^k$.  On the other hand, relations \eqref{ni}, \eqref{ni2}, and \eqref{ni3}  imply
\begin{equation}\label{ni4}
0\leq \frac{\dim M_{(q_1,\ldots, q_k)}}{n_i^{q_i}}\leq  \frac{ \dim M_{(q_1,\ldots q_{i-1}, q_i-1, q_{i+1}\ldots, q_k)}}{n_i^{q_i-1}}+\frac{1}{n_i^{q_i}}\prod_{{s=1}\atop{s\neq i}}^k(1+n_s+\cdots + n_s^{q_s})\dim \cD.
\end{equation}
For $x\in \RR$ we set $x^+:=\max \{x,0\}$ and $x^-:=\max \{-x,0\}$. Denote $x_{q_i}:=\frac{\dim M_{(q_1,\ldots, q_k)}}{n_i^{q_i}}$ and note that
$$
0\leq x_N=x_0+\sum_{p=1}^N (x_p-x_{p-1})^+-\sum_{p=1}^N(x_p-x_{p-1})^-.
$$
Hence, using relation \eqref{ni4}, we deduce that
$$
\sum_{p=1}^N(x_p-x_{p-1})^-\leq x_0+\sum_{p=1}^N (x_p-x_{p-1})^+
\leq x_0+\left(\sum_{p=1}^N\frac{1}{n_i^p}\right) \prod_{{s=1}\atop{s\neq i}}^k(1+n_s+\cdots + n_s^{q_s})\dim \cD.
$$
Consequently,   we obtain
$$
x_0+ \sum_{p=1}^N |x_p-x_{p-1}|\leq 2x_0+2\sum_{p=1}^N (x_p-x_{p-1})^+
$$
for any $N\in \NN$.
Since $n_i\geq 2$, the inequalities above show that  the sequence
$x_N=x_0+\sum_{p=1}^N (x_p-x_{p-1})$ is convergent as $N\to \infty$. Therefore,
\begin{equation}\label{LIM}
\lim_{q_i\to\infty}\frac{\dim M_{(q_1,\ldots, q_k)}}{n_i^{q_i}}\quad \text{  exists for any } \quad q_1,\ldots q_{i-1},  q_{i+1}\ldots, q_k\in \ZZ_+.
\end{equation}

The next step in our proof is to show that the iterated limit
$$
 \lim_{q_{k}\to\infty}\cdots\lim_{q_{1}\to\infty} \frac{\dim M_{(q_1,\ldots, q_k)}}{n_1^{q_1}\cdots n_k^{q_k}} \quad \text{ exists.}
$$
We use an inductive argument. Due to  relation \eqref{LIM},  the limit
$\lim_{q_1\to\infty}\frac{\dim M_{(q_1,\ldots, q_k)}}{n_1^{q_1}}$  exists for any  $q_2, \ldots, q_k$ in $\ZZ_+$. Assume that $1\leq p\leq k-1$ and that  the iterated limit
$$
 y(q_{p+1},\ldots, q_k):=\lim_{q_{p}\to\infty}\cdots\lim_{q_{1}\to\infty} \frac{\dim M_{(q_1,\ldots, q_k)}}{n_1^{q_1}\cdots n_p^{q_p}}
 $$  exists  for any  $q_{p+1}, \ldots, q_k$ in $\ZZ_+$.
Due to relation \eqref{ni4}, we have
\begin{equation*}
0\leq \frac{\dim M_{(q_1,\ldots, q_k)}}{n_1^{q_1}\cdots n_p^{q_p}}\leq  n_{p+1}\frac{ \dim M_{(q_1,\ldots q_{p}, q_{p+1}-1, q_{p+2}\ldots, q_k)}}{n_1^{q_1}\cdots n_p^{q_p}}+\frac{1}{n_1^{q_1}\cdots n_p^{q_p}}\prod_{{s=1}\atop{s\neq p+1}}^k(1+n_s+\cdots + n_s^{q_s})\dim \cD.
\end{equation*}
Consequently, taking the limits as $q_1\to\infty$, \ldots, $q_p\to \infty$,  we deduce that
$$
\frac{y(q_{p+1},\ldots, q_k)} {n_{p+1}^{q_{p+1}}}\leq
\frac{y(q_{p+1}-1,q_{p+2},\ldots, q_k)} {n_{p+1}^{q_{p+1}-1}}
+\frac{1}{n_{p+1}^{q_{p+1}}}\prod_{s=1}^p \frac{n_s}{n_s-1}
\prod_{s=p+2}^k(1+n_s+\cdots + n_s^{q_s})\dim \cD.
$$
Setting
$z_{q_{p+1}}:=\frac{y(q_{p+1},\ldots, q_k)} {n_{p+1}^{q_{p+1}}}$,
we have
$$
0\leq z_{q_{p+1}}\leq z_{q_{p+1}-1}+ \frac{1}{n_{p+1}^{q_{p+1}}}\prod_{s=1}^p \frac{n_s}{n_s-1}
\prod_{s=p+2}^k(1+n_s+\cdots + n_s^{q_s})\dim \cD, \qquad q_{p+1}\in \NN.
$$
Similarly to the proof that $x_{q_i}$ is convergent  (see relation \eqref{LIM}), one can show that the sequence $\{z_{q_{p+1}}\}$ is convergent as $q_{p+1}\to \infty$ for any $q_{p+2},\ldots, q_k$ in $\ZZ_+$. Therefore,
 the iterated limit
$$
 y(q_{p+2},\ldots, q_k):= \lim_{q_{p+1}\to\infty}\cdots\lim_{q_{1}\to\infty} \frac{\dim M_{(q_1,\ldots, q_k)}}{n_1^{q_1}\cdots n_{p+1}^{q_{p+1}}} \quad \text{ exists},
$$
which proves our assertion. Since the entries of ${X}_s:=(X_{s,1},\ldots, X_{s,n_s})$  are commuting with the entries of
      ${X}_t:=(X_{t,1},\ldots, X_{t,n_t})$  for any $s,t\in \{1,\ldots, k\}$, $s\neq t$, similar arguments as above show that, for  any  permutation $\sigma$   of $\{1,\ldots, k\}$,
      the iterated limit
$
 L_\sigma:=\lim_{q_{\sigma(1)}\to\infty}\cdots\lim_{q_{\sigma(k)}\to\infty} a_{(q_1,\ldots, q_k)}
$
exists, where $a_{(q_1,\ldots, q_k)}:=\frac{\dim M_{(q_1,\ldots, q_k)}}{n_1^{q_1}\cdots n_k^{q_k}}$.

The next step  is to show that all these iterated limits are equal. We proceed by contradiction.
Without loss of generality we can assume that
$
L_{id}
< L_\sigma
$
for some permutation $\sigma$.
Let $\epsilon>0$ be such that $\epsilon <\frac{L_\sigma-L_{id}}{4}$. Since $n_i\geq 2$ and $L_{id}=\lim_{q_{1}\to\infty}\cdots\lim_{q_{k}\to\infty}a_{(q_1,\ldots, q_k)}$,    we can  choose  natural numbers $N_1\leq N_2\leq \ldots \leq N_k$ such that
$\sum_{j=N_i+1}^\infty \frac{1}{n_i^j}<\frac{\epsilon}{kM} $ for any $i\in \{1,\ldots, k\}$, where $M:=(\dim\cD) \prod_{i=1}^k\frac{n_i}{n_i-1}$,
and such that
$$|a_{(N_1,\ldots, N_k)}-L_{id}|<\epsilon.
$$
Taking into account that $
 L_\sigma=\lim_{q_{\sigma(1)}\to\infty}\cdots\lim_{q_{\sigma(k)}\to\infty} a_{(q_1,\ldots, q_k)},
$
we can choose natural numbers $C_1,\ldots, C_k$ such that $C_i>\max\{N_1,\ldots, N_k\}$ and
$$
|a_{(C_1,\ldots, C_k)}-L_\sigma|<\epsilon.
$$
Consequently, we obtain
\begin{equation}
\label{AN}
a_{(C_1,\ldots, C_k)}-a_{(N_1,\ldots, N_k)}>2\epsilon.
\end{equation}
For each $(q_1,\ldots, q_k)\in \ZZ_+^k$, set
$$
d_{(q_1,\ldots, q_k)}:=\frac{1}{n_1^{q_1}\cdots n_k^{q_k}}
\prod_{i=1}^k (1+n_i+\cdots n_i^{q_i}) \dim\cD
$$
and note that $d_{(q_1,\ldots, q_k)}\leq (\dim\cD) \prod_{i=1}^k\frac{n_i}{n_i-1}$.
Using  relation \eqref{ni}, we deduce that
\begin{equation*}
\begin{split}
&a_{(C_1,C_2,\ldots, C_k)}\\
&=a_{(N_1,N_2,\ldots, N_k)} +[(a_{(N_1+1,N_2,\ldots, N_k)}-a_{(N_1,N_2,\ldots, N_k)})+\cdots +(a_{(C_1,N_2,\ldots, N_k)}-a_{(C_1-1,N_2,\ldots, N_k)})]\\
&\quad + [(a_{(C_1,N_2+1,N_3,\ldots, N_k)}-a_{(C_1,N_2,N_3,\ldots, N_k)})+\cdots +(a_{(C_1,C_2,N_3,\ldots, N_k)}-a_{(C_1,C_2-1,N_3\ldots, N_k)})]\\
&\quad \vdots\\
&\quad  + [(a_{(C_1,C_2,\ldots,C_{k-1}, N_k+1)}-a_{(C_1,C_2,\ldots, C_{k-1}, N_k)})+\cdots +(a_{(C_1,C_2,\ldots, C_k)}-a_{(C_1,C_2,\ldots, C_{k-1}, C_k-1)})]\\
&\leq a_{(N_1,N_2,\ldots, N_k)}
+\sum_{j=N_1+1}^{C_1} \frac{1}{n_1^j} \frac{\dim M_{(0,N_2,\ldots, N_k)}}{n_2^{N_2}\cdots n_k^{N_k}}\\
&\quad +\sum_{j=N_2+1}^{C_2} \frac{1}{n_2^j} \frac{\dim M_{(C_1,0,N_3,\ldots, N_k)}}{n_1^{C_1}n_3^{N_3}\cdots n_k^{N_k}} +\cdots
+\sum_{j=N_k+1}^{C_k} \frac{1}{n_k^j} \frac{\dim M_{(C_1,0,N_3,\ldots, N_k)}}{n_1^{C_1}n_2^{N_2}\cdots n_{k-1}^{N_{k-1}}}\\
&\leq
a_{(N_1,N_2,\ldots, N_k)} + d_{(0,N_2,\ldots, N_k)}
\left(\sum_{j=N_1+1}^{C_1} \frac{1}{n_1^j}\right)\\
&\quad +
d_{(C_1,0,N_3,\ldots, N_k)}
\left(\sum_{j=N_2+1}^{C_2} \frac{1}{n_2^j}\right)+\cdots +
d_{(C_1,\ldots,C_{k-1},0)}
\left(\sum_{j=N_k+1}^{C_k} \frac{1}{n_1^j}\right)\\
&\leq
a_{(N_1,N_2,\ldots, N_k)} + \frac{\epsilon}{kM}  k (\dim\cD) \prod_{i=1}^k\frac{n_i}{n_i-1}\\
&=a_{(N_1,N_2,\ldots, N_k)}  +\epsilon.
\end{split}
\end{equation*}
Consequently, we have
 $$0\leq a_{(C_1,C_2,\ldots, C_k)} -a_{(N_1,N_2,\ldots, N_k)}  \leq \epsilon,
 $$
which contradicts relation \eqref{AN}. Therefore, we must have $L_{id}=L_\sigma$. This completes the proof that the  iterated limits $\lim_{q_{\sigma(1)}\to\infty}\cdots\lim_{q_{\sigma(k)}\to\infty} a_{(q_1,\ldots, q_k)}$ exist and are equal for any permutation $\sigma$ of the set $\{1,\ldots, k\}$.

Now, we show that the net $\{a_{(q_1,\ldots, q_k)}\}_{(q_1,\ldots, q_k)\in \ZZ_+^k}$ is convergent and
$$
\lim_{(q_1,\ldots, q_k)\in \ZZ_+^k} a_{(q_1,\ldots, q_k)}=L_{id}.
$$
Assume, by contradiction, that the net $\{a_{(q_1,\ldots, q_k)}\}_{(q_1,\ldots, q_k)\in \ZZ_+^k}$ is not  convergent to $L_{id}$. Then we can find  $\epsilon_0>0$ such that for any ${\bf q}\in \ZZ_+^k$ there exists ${\bf p}\in \ZZ_+^k$ with ${\bf p}\geq {\bf q}$ such that
$$
a_{\bf p}\in (-\infty, L_{id}-3\epsilon_0]\cup [L_{id}+3 \epsilon_0, \infty).
$$
In particular, for each ${\bf q}^{(n)}=(n,\ldots,n)\in \ZZ_+^k$ there exists ${\bf p}^{(n)}\in \ZZ_+^k$ with ${\bf p}^{(n)}>{\bf q}^{(n)}$ and such that the relation above holds. This implies that at least one of the sets
$$
\Gamma_-:=\{{\bf q}\in \ZZ_+^k:\ a_{\bf q}\leq L-3\epsilon_0\}\quad \text{ and }\quad \Gamma_+:=\{{\bf q}\in \ZZ_+^k:\ a_{\bf q}\geq L+3\epsilon_0\}
$$
 is cofinal in $\ZZ_+^k$.  Assume that $\Gamma_-$ is cofinal in $\ZZ_+^k$.  Then, using that $n_i\geq 2$  for $i\in \{1,\ldots, k\}$, we can find natural numbers $D_1,\ldots, D_k$ such that
 $\sum_{j=D_i+1}^\infty \frac{1}{n_i^j}<\frac{\epsilon_0}{kM}$ for $i\in \{1,\ldots, k\}$, where $M:=(\dim\cD) \prod_{i=1}^k\frac{n_i}{n_i-1}$,
and such that
$$a_{(D_1,\ldots, D_k)}\leq L_{id}-3\epsilon_0.
$$
Since $L_{id}=\lim_{q_{1}\to\infty}\cdots\lim_{q_{k}\to\infty}a_{(q_1,\ldots, q_k)}$, we can choose natural numbers $R_1,\ldots, R_k$ such that $R_i>C_i$ for $i\in \{1,\ldots, k\}$ and such that
$$
a_{(R_1,\ldots, R_k)}\in (L_{id}-\epsilon_0, L+\epsilon_0).
$$
 Consequently, we deduce that $a_{(R_1,\ldots, R_k)}-a_{(D_1,\ldots, D_k)}\geq 2\epsilon_0$.
 On the other hand, as in the first part of the proof, one can show that
 $$
 a_{(R_1,\ldots, R_k)}\leq a_{(D_1,\ldots, D_k)}+\epsilon_0,
 $$
 which is a contradiction.

   Now, assume that $\Gamma_+$ is a cofinal set in $\ZZ_+^k$. Since $n_i\geq 2$ and $L_{id}=\lim_{q_{1}\to\infty}\cdots\lim_{q_{k}\to\infty}a_{(q_1,\ldots, q_k)}$,    we can  choose  natural numbers $N_1\leq N_2\leq \ldots \leq N_k$ such that
$\sum_{j=N_i+1}^\infty \frac{1}{n_i^j}<\frac{\epsilon}{kM} $ for any $i\in \{1,\ldots, k\}$, where $M:=(\dim\cD) \prod_{i=1}^k\frac{n_i}{n_i-1}$,
and such that
$|a_{(N_1,\ldots, N_k)}-L_{id}|<\epsilon.
$
  Taking into account that $\Gamma_+$ is a cofinal set in $\ZZ_+^k$, we can find natural numbers $N_1',\ldots, N_k'$ such that $N_i'\geq N_i$ for any $i\in \{1,\ldots,k\}$ and such that
$$
a_{(N_1',\ldots, N_k')}\geq L_{id}+3\epsilon_0.
$$
 Consequently,   we have   $a_{(N_1',\ldots, N_k')}-a_{(N_1,\ldots, N_k)}\geq 2\epsilon_0$.
 Once again, as in the first part of the proof,  since$(N_1',\ldots, N_k')\geq (N_1,\ldots, N_k)$ one can show that
 $a_{(N_1',\ldots, N_k')}-a_{(N_1,\ldots, N_k)}\leq\epsilon_0$, which is a contradiction.
     This completes the proof.
\end{proof}

 Let ${\bf T}=({ T}_1,\ldots, {T}_k)\in B(\cH)^{n_1}\times_c\cdots \times_c B(\cH)^{n_k}$ be  a $k$-tuple such that  its defect operator defined by
${\bf \Delta_{T}}(I):=(id-\Phi_{T_1})\cdots (id-\Phi_{T_k})(I)$ has  finite rank. In this case, we say that ${\bf T}$ has finite rank defect or that ${\bf T}$ has  finite rank.
For each ${\bf q}=(q_1,\ldots, q_k)\in \ZZ_+^k$, we define
$$
M_{\bf q}({\bf T}):=\text{\rm span}\,\left\{T_{1,\alpha_1}\cdots T_{k,\alpha_k}h: \  \alpha_i\in \FF_{n_i}^+, |\alpha_i|\leq q_i,  h\in {\bf \Delta_{T}}(I) (\cH)\right\}.
$$
Let $\{{\bf q}^{(m)}\}_{m=1}^\infty$  with ${\bf q}^{(m)}=({q}^{(m)}_1,\ldots, { q}^{(m)}_k)$ be an increasing  cofinal sequence in $\ZZ_+^k$. Then
 $$
 M_{{\bf q}^{(1)}}({\bf T})\subseteq M_{{\bf q}^{(2)}}({\bf T})\subseteq \cdots \subseteq M({\bf T}) \quad \text{ and } \quad
\bigcup_{m=1}^\infty M_{{\bf q}{(m)}}({\bf T})=M({\bf T}),
$$
where
$$
M({\bf T}):=\text{\rm span}\,\left\{T_{1,\alpha_1}\cdots T_{k,\alpha_k}h: \  \alpha_i\in \FF_{n_i}^+,   h\in {\bf \Delta_{T}}(I)^{1/2} (\cH)\right\}.
$$
We introduce the {\it Euler characteristic }   of ${\bf T}$ by setting
\begin{equation*}
\chi({\bf T}):=
 \lim_{m\to\infty}  \frac{\dim M_{{\bf q}^{(m)}}({\bf T})}{\prod_{i=1}^k (1+ n_i+\cdots  + n_i^{q_i^{(m)}})}.
\end{equation*}
  We remark that the Euler characteristic does not depend on any topology associated with the Hilbert space $\cH$ but  depends solely on the linear algebra of  the linear submanifold  $M({\bf T})$ of $\cH$.

For each $q_i\in \{0,1,\ldots\}$ and  $i\in \{1,\ldots, k\}$, let $P_{q_i}^{(i)}$ be the orthogonal projection of the full Fock space $F^2(H_{n_i})$ onto the span of all vectors $e_{\alpha_i}^i$ with  $\alpha\in  \FF_{n_i}^+$ and $|\alpha_i|=q_i$. We recall that $P_{\leq(q_1,\ldots, q_k)}$  is  the orthogonal projection of $\otimes_{i=1}^k F^2(H_{n_i})$ onto the  span of all vectors of the form $e_{\alpha_1}^1\otimes\cdots \otimes e_{\alpha_k}^k$, where $\alpha_i\in \FF_{n_i}^+$, $|\alpha_i|\leq q_i$.
In what follows,  we  prove the existence of  the Euler characteristic associated with each  element in the regular polyball and establish several  asymptotic formulas.

\begin{theorem} \label{Euler1} Let  \, ${\bf T}=({ T}_1,\ldots, { T}_k)$ be  in the regular polyball  $ {\bf B_n}(\cH)$ and have  finite rank defect. If ${\bf n}:=(n_1,\ldots, n_k)$ with $n_i\geq 2$,  then the Euler characteristic of ${\bf T}$ exists and satisfies the  asymptotic formulas
\begin{equation*}\begin{split}
\chi({\bf T})&=\lim_{{\bf q}=(q_1,\ldots, q_k)\in \ZZ_+^k} \frac{\dim M_{{\bf q}}({\bf T})}{n_1^{q_1}\cdots n_k^{q_k}} \prod_{i=1}^k \left(1-\frac{1}{n_i}\right)\\
&=
 \lim_{{\bf q}=(q_1,\ldots, q_k)\in \ZZ_+^k}
\frac{\rank\left[ {\bf K_{T}^*}(P_{\leq(q_1,\ldots, q_k)}  \otimes I){\bf K_{T}}\right]}{\rank\left[P_{\leq(q_1,\ldots, q_k)}\right]}\\
&= \lim_{{\bf q}=(q_1,\ldots, q_k)\in \ZZ_+^k}\frac{\rank \left[(id-\Phi_{T_1}^{q_1+1})\circ \cdots \circ (id-\Phi_{T_k}^{q_k+1})(I)\right]}
{\prod_{i=1}^k(1+n_i+\cdots + n_i^{q_i})},
\end{split}
\end{equation*}
where ${\bf K_T}$ is the Berezin kernel   and $\Phi_{T_1},\ldots, \Phi_{T_k}$ are the  completely positive maps associated with ${\bf T}$.
\end{theorem}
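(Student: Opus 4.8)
The plan is to express every quantity in the statement through the single combinatorial invariant $\dim M_{\bf q}({\bf T})$ and then invoke Theorem \ref{Eul}. Since ${\bf T}$ has finite rank defect, the subspace $\cD:={\bf \Delta_{T}}(I)(\cH)$ is finite dimensional and $M_{\bf q}({\bf T})=M_{\bf q}({\bf T},\cD)$, so Theorem \ref{Eul} guarantees that the net $\dim M_{\bf q}({\bf T})/(n_1^{q_1}\cdots n_k^{q_k})$ converges over $\ZZ_+^k$. The elementary identity $1+n_i+\cdots+n_i^{q_i}=(n_i^{q_i+1}-1)/(n_i-1)$ gives $\lim_{q_i\to\infty} n_i^{q_i}/(1+n_i+\cdots+n_i^{q_i})=1-1/n_i$, where the hypothesis $n_i\geq 2$ keeps these factors finite and positive. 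Multiplying the two convergent nets shows that $\dim M_{\bf q}({\bf T})/\prod_{i=1}^k(1+n_i+\cdots+n_i^{q_i})$ converges over $\ZZ_+^k$; since an increasing cofinal sequence is cofinal as a map into $\ZZ_+^k$, the associated subsequence of values has the same limit. Hence the limit defining $\chi({\bf T})$ exists, is independent of the cofinal sequence, and equals the first displayed formula.

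It then remains to establish the two rank identities
\begin{equation*}
\rank\left[{\bf K_{T}^*}(P_{\leq(q_1,\ldots, q_k)}\otimes I){\bf K_{T}}\right]=\dim M_{\bf q}({\bf T})=\rank\left[(id-\Phi_{T_1}^{q_1+1})\circ\cdots\circ(id-\Phi_{T_k}^{q_k+1})(I)\right],
\end{equation*}
together with $\rank[P_{\leq(q_1,\ldots, q_k)}]=\prod_{i=1}^k(1+n_i+\cdots+n_i^{q_i})$, after which dividing by the last quantity turns the first formula into the second and third. For the Berezin kernel identity I would use that $P_{\leq(q_1,\ldots, q_k)}\otimes I$ is an orthogonal projection, so ${\bf K_{T}^*}(P_{\leq(q_1,\ldots, q_k)}\otimes I){\bf K_{T}}=A^*A$ with $A:=(P_{\leq(q_1,\ldots, q_k)}\otimes I){\bf K_{T}}$, whence its rank equals $\rank A=\rank A^*$. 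I then evaluate $A^*={\bf K_{T}^*}(P_{\leq(q_1,\ldots, q_k)}\otimes I)$ on basis vectors: from the definition of the Berezin kernel, ${\bf K_{T}^*}(e^1_{\beta_1}\otimes\cdots\otimes e^k_{\beta_k}\otimes\xi)=T_{k,\beta_k}\cdots T_{1,\beta_1}{\bf \Delta_{T}}(I)^{1/2}\xi$, and because distinct blocks commute this reorders to $T_{1,\beta_1}\cdots T_{k,\beta_k}{\bf \Delta_{T}}(I)^{1/2}\xi$. Letting $|\beta_i|$ range up to $q_i$ and using that ${\bf \Delta_{T}}(I)$ and ${\bf \Delta_{T}}(I)^{1/2}$ have the same finite dimensional range shows $\ran A^*=M_{\bf q}({\bf T})$, which is the left identity.

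For the completely positive map identity I would use that $\Phi_{T_i}$ and $\Phi_{T_s}$ commute for $i\neq s$ (again because distinct blocks commute), together with the telescoping $id-\Phi_{T_i}^{q_i+1}=\sum_{p_i=0}^{q_i}\Phi_{T_i}^{p_i}\circ(id-\Phi_{T_i})$, to expand
\begin{equation*}
(id-\Phi_{T_1}^{q_1+1})\circ\cdots\circ(id-\Phi_{T_k}^{q_k+1})(I)=\sum_{|\alpha_i|\leq q_i}T_{1,\alpha_1}\cdots T_{k,\alpha_k}\,{\bf \Delta_{T}}(I)\,T_{k,\alpha_k}^*\cdots T_{1,\alpha_1}^*.
\end{equation*}
This is a finite sum $\sum C_{\alpha}C_{\alpha}^*$ with $C_{\alpha}:=T_{1,\alpha_1}\cdots T_{k,\alpha_k}{\bf \Delta_{T}}(I)^{1/2}$, so its range equals $\sum_{|\alpha_i|\leq q_i}\ran C_{\alpha}=M_{\bf q}({\bf T})$, giving the right identity and thus the third formula.

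The two range computations are routine once set up; the only points requiring care are the finite rank hypothesis (so that $\ran {\bf \Delta_{T}}(I)=\ran {\bf \Delta_{T}}(I)^{1/2}$ and all ranges are closed, with no completion issues) and the commutation of distinct blocks (used to normalize the orderings of the words $T_{k,\beta_k}\cdots T_{1,\beta_1}$). The genuine analytic content—existence and coincidence of the iterated and net limits—is entirely absorbed into Theorem \ref{Eul}; here one merely transports that convergence across the equalities above, so I do not expect a serious obstacle beyond this bookkeeping.
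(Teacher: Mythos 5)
Your proposal is correct, and for the existence of $\chi({\bf T})$ and the first two formulas it follows the paper's proof: apply Theorem \ref{Eul} with $\cD={\bf \Delta_{T}}(I)(\cH)$, observe that $n_i^{q_i}/(1+n_i+\cdots+n_i^{q_i})\to 1-1/n_i$, and identify $\ran\left[{\bf K_{T}^*}(P_{\leq(q_1,\ldots,q_k)}\otimes I)\right]$ with $M_{\bf q}({\bf T})$ exactly as the paper does. Where you genuinely diverge is the third formula. The paper establishes the full operator identity ${\bf K_{T}^*}(P_{\leq(q_1,\ldots,q_k)}\otimes I){\bf K_{T}}=(id-\Phi_{T_1}^{q_1+1})\circ\cdots\circ(id-\Phi_{T_k}^{q_k+1})(I)$, and the hard step there is proving $(id-\Phi_{T_1})\circ\cdots\circ(id-\Phi_{T_k})({\bf K_{T}^*}{\bf K_{T}})={\bf \Delta_{T}}(I)$ via the WOT-limit representation of ${\bf K_{T}^*}{\bf K_{T}}$ and an iterated interchange of limits. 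You bypass all of that: you only need equality of ranks, so you expand $(id-\Phi_{T_1}^{q_1+1})\circ\cdots\circ(id-\Phi_{T_k}^{q_k+1})(I)=\sum_{|\alpha_i|\leq q_i}C_\alpha C_\alpha^*$ with $C_\alpha=T_{1,\alpha_1}\cdots T_{k,\alpha_k}{\bf \Delta_{T}}(I)^{1/2}$ by telescoping and commutation, and compute the range of this finite sum of positive finite-rank operators as $\sum_\alpha\ran C_\alpha=M_{\bf q}({\bf T})$. That is a legitimate and more elementary argument; the one fact you should state explicitly (you call it ``routine'') is that for finitely many positive finite-rank operators $A_j$ one has $\ran\bigl(\sum_j A_j\bigr)=\sum_j\ran A_j$, which follows from $\ker\bigl(\sum_j A_j\bigr)=\bigcap_j\ker A_j$ and finite-dimensionality; positivity is essential here, since the claim fails for general sums. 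What your shortcut gives up is the operator identity \eqref{KPK} itself, which the paper reuses later (e.g.\ in the Gauss--Bonnet--Chern argument), but for the statement of Theorem \ref{Euler1} your rank-only version suffices.
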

\begin{proof}
Applying Theorem \ref{Eul} when $\cD={\bf \Delta_{T}}(I)^{1/2} (\cH)$,  we deduce that $\lim_{m\to\infty}  \frac{\dim M_{{\bf q}^{(m)}}({\bf T})}{n_1^{q_1^{(m)}}\cdots n_k^{q_k^{(m)}}}$ exists and  does not depend on the choice of the cofinal  sequence $\{{\bf q}^{(m)}=(q_1^{(m)},\ldots, q_k^{(m)})\}_{m=1}^\infty$ in $\ZZ_+^k$.  Moreover, the limit coincides with
$$
\lim_{{\bf q}=(q_1,\ldots, q_k)\in \ZZ_+^k} \frac{\dim M_{{\bf q}}({\bf T})}{n_1^{q_1}\cdots n_k^{q_k}}=\lim_{q_{1}\to\infty}\cdots\lim_{q_{k}\to\infty} \frac{\dim M_{\bf q}({\bf T})}{n_1^{q_1}\cdots n_k^{q_k}}.
$$
 Consequently, the Euler characteristic of ${\bf T}$ exists and  the first  equality  in the theorem holds.
The  noncommutative Berezin kernel associated with
   ${\bf T}\in {\bf B_n}(\cH)$ is the operator
   $${\bf K_{T}}: \cH \to F^2(H_{n_1})\otimes \cdots \otimes  F^2(H_{n_k}) \otimes  \overline{{\bf \Delta_{T}}(I) (\cH)}$$
   defined by
   $$
   {\bf K_{T}}h:=\sum_{\beta_i\in \FF_{n_i}^+, i=1,\ldots,k}
   e^1_{\beta_1}\otimes \cdots \otimes  e^k_{\beta_k}\otimes {\bf \Delta_{T}}(I)^{1/2} T_{1,\beta_1}^*\cdots T_{k,\beta_k}^*h, \qquad h\in \cH.
   $$
 It is easy to see that
 $$
 {\bf K_{T}^*} (e^1_{\beta_1}\otimes \cdots \otimes  e^k_{\beta_k}\otimes  h)=T_{1,\beta_1}\cdots T_{k,\beta_k}{\bf \Delta_{T}}(I)^{1/2}h
 $$
 for any $\beta_i\in \FF_{n_i}^+$ and $ i\in \{1,\ldots,k\}$. This shows that the range of
 ${\bf K_{T}^*} (P_{\leq(q_1,\ldots, q_k)}\otimes I)$  is equal to $M_{(q_1,\ldots, q_k)}({\bf T}):=M_{{\bf q}}({\bf T})$.  Hence, the range of
$ {\bf K_{T}^*}(P_{\leq(q_1,\ldots, q_k)}  \otimes I){\bf K_{T}}$ is equal to $M_{{\bf q}}({\bf T})$. Since
$$\rank\left[P_{\leq(q_1,\ldots, q_k)}\right]=\prod_{i=1}^k(1+n_i+\cdots + n_i^{q_i}),
$$
 the results above imply
 $$
 \chi({\bf T})=\lim_{q_1\to\infty}\cdots\lim_{q_k\to\infty}
\frac{\rank\left[ {\bf K_{T}^*}(P_{\leq(q_1,\ldots, q_k)}  \otimes I){\bf K_{T}}\right]}{\rank\left[P_{\leq(q_1,\ldots, q_k)}\right]}.
$$
The next step in our proof is to show that
\begin{equation}\label{KPK}
{\bf K_{T}^*}(P_{\leq(q_1,\ldots, q_k)}  \otimes I){\bf K_{T}}=(id-\Phi_{T_1}^{q_1+1})\circ \cdots \circ (id-\Phi_{T_k}^{q_k+1})(I)
\end{equation}
for any $(q_1,\ldots, q_k)\in \ZZ_+^k$.
Let   ${\bf S}:=({\bf S}_1,\ldots, {\bf S}_k)\in {\bf B_n}(\otimes_{i=1}^kF^2(H_{n_i}))$ with  ${\bf S}_i:=({\bf S}_{i,1},\ldots,{\bf S}_{i,n_i})$  be  the  universal model associated
  with the abstract
  polyball ${\bf B_n}$.  It is easy to see that     ${\bf S}:=({\bf S}_1,\ldots, {\bf S}_k)$ is  a pure $k$-tuple, i.e. $\Phi_{{\bf S}_i}^p(I)\to 0$ strongly as $p\to\infty$,   and
   $$(id-\Phi_{{\bf S}_1})\circ \cdots \circ (id-\Phi_{{\bf S}_k})(I)={\bf P}_\CC,
   $$
   where ${\bf P}_\CC$ is the
 orthogonal projection from $\otimes_{i=1}^k F^2(H_{n_i})$ onto $\CC 1\subset \otimes_{i=1}^k F^2(H_{n_i})$, where $\CC 1$ is identified with $\CC 1\otimes\cdots \otimes \CC 1$.
  Taking into account that
   ${\bf K_{T}} { T}^*_{i,j}= ({\bf S}_{i,j}^*\otimes I)  {\bf K_{T}}
    $
for any $i\in \{1,\ldots, k\}$ and $j\in \{1,\ldots, n_i\}$, we deduce that
\begin{equation*}
\begin{split}
{\bf K_{T}^*} (P_{q_1}^{(1)}\otimes \cdots \otimes P_{q_k}^{(k)}\otimes I_\cH){\bf K_{T}}
&={\bf K_{T}^*}\left[ (\Phi_{{\bf S}_1}^{q_1}-  \Phi_{{\bf S}_1}^{q_1+1})\circ \cdots \circ (\Phi_{{\bf S}_k}^{q_k}-  \Phi_{{\bf S}_k}^{q_k+1})(I)\otimes I_\cH \right]{\bf K_{T}}\\
&={\bf K_{T}^*}\left[ \Phi_{{\bf S}_1}^{q_1} \circ \cdots \circ \Phi_{{\bf S}_k}^{q_k}\circ(id-\Phi_{{\bf S}_k})\circ \cdots \circ (id-\Phi_{{\bf S}_1})(I) \otimes I_\cH \right]{\bf K_{T}}\\
&=\Phi_{T_1}^{q_1}\circ \cdots \circ \Phi_{T_k}^{q_k}\circ(id-\Phi_{T_1})\circ \cdots \circ (id-\Phi_{T_k})({\bf K_{T}^*}{\bf K_{T}}).
\end{split}
\end{equation*}
On the other hand, since
\begin{equation} \label{KTKT}
{\bf K_{T}^*}{\bf K_{T}}=
\lim_{q_k\to\infty}\ldots \lim_{q_1\to\infty}  (id-\Phi_{T_k}^{q_k})\circ \cdots \circ (id-\Phi_{T_1}^{q_1})(I),
\end{equation}
where the limits are in the weak  operator topology, we can prove that
\begin{equation} \label{id-Q}
 (id-\Phi_{T_1})\circ \cdots \circ (id-\Phi_{T_k})({\bf K_{T}^*}{\bf K_{T}})={\bf \Delta_{T}}(I).
\end{equation}
Indeed,   note that    $\{(id-\Phi_{T_k}^{q_k})\circ \cdots \circ (id-\Phi_{T_1}^{q_1})(I)\}_{{\bf q}=(q_1,\ldots, q_k)\in \ZZ_+^k}$ is an  increasing sequence of positive operators and
 $$
 (id-\Phi_{T_k}^{q_k})\circ \cdots \circ (id-\Phi_{T_1}^{q_1})(I)
 =\sum_{s_k=0}^{q_k-1}\Phi_{T_k}^{s_k}\circ\cdots \sum_{s_1=0}^{q_1-1}\Phi_{T_1}^{s_1}\circ
(id-\Phi_{T_k})\circ \cdots \circ (id-\Phi_{T_1})(I).
$$
 Since $\Phi_{T_1},\ldots, \Phi_{T_k}$ are commuting WOT-continuous completely positive linear maps  and  $\lim_{q_i\to \infty}\Phi_{T_i}^{q_i}(I)$ exists  in the weak operator topology for each $i\in\{1,\ldots,k\}$, we have
\begin{equation*}
\begin{split}
(id-\Phi_{T_1})({\bf K_{T}^*}{\bf K_{T}})
&=\lim_{q_k\to\infty}\ldots \lim_{q_1\to\infty}  (id-\Phi_{T_k}^{q_k})\circ \cdots \circ (id-\Phi_{T_1}^{q_1})\circ(id-\Phi_{T_1})(I)\\
&=\lim_{q_k\to\infty}\ldots \lim_{q_2\to\infty}  (id-\Phi_{T_k}^{q_k})\circ \cdots \circ (id-\Phi_{T_2}^{q_2})
\left[\lim_{q_1\to\infty}(id-\Phi_{T_1}^{q_1})\circ(id-\Phi_{T_1})(I)\right]\\
&=\lim_{q_k\to\infty}\ldots \lim_{q_2\to\infty}  (id-\Phi_{T_k}^{q_k})\circ \cdots \circ (id-\Phi_{T_2}^{q_2})\circ(id-\Phi_{T_1})(I).
\end{split}
\end{equation*}
Applying now $id-\Phi_{T_2}$, a similar reasoning leads to
$$
(id-\Phi_{T_2})\circ(id-\Phi_{T_1})({\bf K_{T}^*}{\bf K_{T}})=\lim_{q_k\to\infty}\ldots \lim_{q_3\to\infty}  (id-\Phi_{T_k}^{q_k})\circ \cdots \circ (id-\Phi_{T_3}^{q_3})\circ(id-\Phi_{T_1})\circ(id-\Phi_{T_2})(I).
$$
Continuing this process, we obtain relation \eqref{id-Q}. Combining \eqref{id-Q} with the relation preceding \eqref{KTKT},  we have
\begin{equation*}
 {\bf K_{T}^*} (P_{q_1}^{(1)}\otimes \cdots \otimes P_{q_k}^{(k)}\otimes I){\bf K_{T}}=
  \Phi_{T_1}^{q_1}\circ \cdots \circ \Phi_{T_k}^{q_k}({\bf \Delta_{T}}(I))
  \end{equation*}
  for any $q_1,\ldots,q_k\in \ZZ^+$, which implies
  \begin{equation*}
  \begin{split}
{\bf K_{T}^*}(P_{\leq(q_1,\ldots, q_k)}  \otimes I){\bf K_{T}}&=
\sum_{s_k=0}^{q_k}\Phi_{T_k}^{s_k}\circ\cdots \sum_{s_1=0}^{q_1}\Phi_{T_1}^{s_1}\circ
(id-\Phi_{T_k})\circ \cdots \circ (id-\Phi_{T_1})(I)\\
&=(id-\Phi_{T_1}^{q_1+1})\circ \cdots \circ (id-\Phi_{T_k}^{q_k+1})(I)
\end{split}
\end{equation*}
and proves relation \eqref{KPK}. Now, one can easily complete the proof.
\end{proof}

We remark that,  in Theorem \ref{Euler1},  the limit over  ${\bf q}=(q_1,\ldots, q_k)\in \ZZ_+^k$ can be replaced by the limit over any  increasing  cofinal sequence in $\ZZ_+^k$,  or by any iterated limit  $\lim_{q_{\sigma(1)}\to\infty}\cdots\lim_{q_{\sigma(k)}\to\infty}$,
where $\sigma$ is a permutation of $\{1,\ldots, k\}$.

\begin{corollary} If ${\bf T}\in  {\bf B_n}(\cH)$ has   finite rank defect, then
$$
0\leq \text{\rm curv}({\bf T})\leq  \chi({\bf T})\leq \rank [{\bf \Delta_{T}}(I)].
$$
\end{corollary}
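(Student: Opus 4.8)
The plan is to treat the three inequalities separately, reducing each to an estimate that is already available. The leftmost inequality $0\leq\text{\rm curv}({\bf T})$ is immediate: each operator ${\bf K_{T}^*}(P_{q_1}^{(1)}\otimes\cdots\otimes P_{q_k}^{(k)}\otimes I){\bf K_{T}}$ is positive, so its trace is nonnegative, and $\text{\rm curv}({\bf T})$ is a limit of Cesàro averages of such nonnegative quantities divided by the positive numbers $\text{\rm trace}[P_{q_1}^{(1)}\otimes\cdots\otimes P_{q_k}^{(k)}]=n_1^{q_1}\cdots n_k^{q_k}$.

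For the rightmost inequality I would apply relation \eqref{ni3} from the proof of Theorem \ref{Eul} with the choice $\cD={\bf \Delta_{T}}(I)^{1/2}(\cH)$, so that $M_{\bf q}({\bf T},\cD)=M_{\bf q}({\bf T})$ and $\dim\cD=\rank[{\bf \Delta_{T}}(I)]$. That relation already gives, for every ${\bf q}\in\ZZ_+^k$,
$$
\frac{\dim M_{\bf q}({\bf T})}{\prod_{i=1}^k(1+n_i+\cdots+n_i^{q_i})}\leq\dim\cD=\rank[{\bf \Delta_{T}}(I)],
$$
and passing to the limit over ${\bf q}$, which exists by Theorem \ref{Euler1}, yields $\chi({\bf T})\leq\rank[{\bf \Delta_{T}}(I)]$.

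The crux is the middle inequality $\text{\rm curv}({\bf T})\leq\chi({\bf T})$, where a trace-normalized limit must be compared with a rank-normalized one. The elementary point is that a positive finite-rank operator $A$ with $\|A\|\leq 1$ satisfies $\text{\rm trace}(A)\leq\rank(A)$, because each of its nonzero eigenvalues lies in $(0,1]$. Since ${\bf K_{T}}$ is a contraction and $P_{\leq(q_1,\ldots,q_k)}\otimes I$ is a projection, the operator ${\bf K_{T}^*}(P_{\leq(q_1,\ldots,q_k)}\otimes I){\bf K_{T}}$ is a positive contraction, and it has finite rank since its range is $M_{\bf q}({\bf T})$ (as computed in the proof of Theorem \ref{Euler1}) and ${\bf \Delta_{T}}(I)$ has finite rank. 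Recalling from \cite{Po-curvature-polyballs} that the curvature admits the net asymptotic formula
$$
\text{\rm curv}({\bf T})=\lim_{{\bf q}\in\ZZ_+^k}\frac{\text{\rm trace}[{\bf K_{T}^*}(P_{\leq(q_1,\ldots,q_k)}\otimes I){\bf K_{T}}]}{\text{\rm trace}[P_{\leq(q_1,\ldots,q_k)}]},
$$
and using that $\text{\rm trace}[P_{\leq(q_1,\ldots,q_k)}]=\rank[P_{\leq(q_1,\ldots,q_k)}]$, the pointwise inequality $\text{\rm trace}(A)\leq\rank(A)$ applied to $A={\bf K_{T}^*}(P_{\leq(q_1,\ldots,q_k)}\otimes I){\bf K_{T}}$ gives, for each ${\bf q}$,
$$
\frac{\text{\rm trace}[{\bf K_{T}^*}(P_{\leq(q_1,\ldots,q_k)}\otimes I){\bf K_{T}}]}{\text{\rm trace}[P_{\leq(q_1,\ldots,q_k)}]}\leq\frac{\rank[{\bf K_{T}^*}(P_{\leq(q_1,\ldots,q_k)}\otimes I){\bf K_{T}}]}{\rank[P_{\leq(q_1,\ldots,q_k)}]}.
$$
Taking the limit over ${\bf q}\in\ZZ_+^k$ and invoking the rank asymptotic formula for $\chi({\bf T})$ from Theorem \ref{Euler1} then gives $\text{\rm curv}({\bf T})\leq\chi({\bf T})$.

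I expect the main obstacle to be organizational rather than computational: I must present both quantities as limits of the \emph{same} directed net over $\ZZ_+^k$ so that the term-by-term inequality $\text{\rm trace}\leq\rank$ transfers to the limit. This is precisely why I rely on the net form of the curvature from \cite{Po-curvature-polyballs} rather than its defining Cesàro average; without that form one would instead have to bound the Cesàro average of the trace ratios by the rank ratios directly, comparing $\rank[\Phi_{T_1}^{q_1}\circ\cdots\circ\Phi_{T_k}^{q_k}({\bf \Delta_{T}}(I))]$ against $\rank[{\bf K_{T}^*}(P_{\leq(q_1,\ldots,q_k)}\otimes I){\bf K_{T}}]$, a more delicate counting argument.
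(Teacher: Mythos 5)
Your proposal is correct and follows essentially the same route as the paper: the middle inequality via the termwise bound $\text{\rm trace}(A)\leq\rank(A)$ for the positive contraction $A={\bf K_{T}^*}(P_{\leq(q_1,\ldots,q_k)}\otimes I){\bf K_{T}}$ combined with the trace and rank asymptotic formulas for the curvature and the Euler characteristic, and the rightmost inequality via relation \eqref{ni3} with $\cD={\bf \Delta_{T}}(I)^{1/2}(\cH)$. The only cosmetic difference is that you invoke the net form of the curvature limit where the paper quotes the iterated-limit form, and you spell out the trivial nonnegativity of the curvature, which the paper leaves implicit.
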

\begin{proof}
 According to \cite{Po-curvature-polyballs}, we have $$\text{\rm curv}({\bf T})=\lim_{q_1\to\infty}\cdots\lim_{q_k\to\infty}
\frac{\text{\rm trace\,}\left[ {\bf K_{T}^*}(P_{\leq(q_1,\ldots, q_k)}  \otimes I_\cH){\bf K_{T}}\right]}{\text{\rm trace\,}\left[P_{\leq(q_1,\ldots, q_k)}\right]}.$$
 Since  $\text{\rm trace\,}\left[ P_{q_1}^{(1)}\otimes \cdots \otimes P_{q_k}^{(k)}\right]=\rank\left[ P_{q_1}^{(1)}\otimes \cdots \otimes P_{q_k}^{(k)}\right]$ and ${\bf K_{T}^*}(P_{\leq(q_1,\ldots, q_k)}  \otimes I_\cH){\bf K_{T}}$ is a positive contraction, Theorem \ref{Euler1} implies $\text{\rm curv}({\bf T})\leq  \chi({\bf T})$. The inequality $\chi({\bf T})\leq \rank [{\bf \Delta_{T}}(I)]$ is due to the inequality \eqref{ni3}, in the particular case when $\cD={\bf \Delta_{T}}(I)^{1/2} (\cH)$.
\end{proof}

\begin{corollary} If ${\bf T}\in  {\bf B_n}(\cH)$  and ${\bf T}'\in {\bf B_n}(\cH')$  have finite rank defects,   then ${\bf T}\oplus {\bf T}'\in {\bf B_n}(\cH\oplus\cH')$ has finite rank  defect and
$$
 \chi({\bf T}\oplus {\bf T}')=\chi({\bf T})+  \chi({\bf T}').
$$
If, in addition, $\dim\cH'<\infty$, then $ \chi({\bf T}\oplus {\bf T}')=\chi({\bf T})$.
\end{corollary}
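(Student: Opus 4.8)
The plan is to reduce everything to the block-diagonal structure of the direct sum and then invoke the asymptotic formulas of Theorem \ref{Euler1}. Writing each $(T\oplus T')_{i,j}:=T_{i,j}\oplus T'_{i,j}$ as a block-diagonal operator on $\cH\oplus\cH'$, I would first record that the cross-commutation relations pass to the direct sum, so that ${\bf T}\oplus {\bf T}'\in B(\cH\oplus\cH')^{n_1}\times_c\cdots\times_c B(\cH\oplus\cH')^{n_k}$. Since $\Phi_{(T\oplus T')_i}(Y\oplus Y')=\Phi_{T_i}(Y)\oplus\Phi_{T'_i}(Y')$ for block-diagonal $Y\oplus Y'$, every defect mapping splits as ${\bf \Delta_{T\oplus T'}^{\bf p}}(I)={\bf \Delta_{T}^{\bf p}}(I)\oplus {\bf \Delta_{T'}^{\bf p}}(I)$; positivity of both summands (the hypothesis ${\bf T},{\bf T}'\in {\bf B_n}$) gives ${\bf T}\oplus {\bf T}'\in {\bf B_n}(\cH\oplus\cH')$, and $\rank[{\bf \Delta_{T\oplus T'}}(I)]=\rank[{\bf \Delta_{T}}(I)]+\rank[{\bf \Delta_{T'}}(I)]<\infty$, so ${\bf T}\oplus {\bf T}'$ has finite rank defect.

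Next I would prove that the module itself splits. Because ${\bf \Delta_{T\oplus T'}}(I)(\cH\oplus\cH')={\bf \Delta_{T}}(I)(\cH)\oplus{\bf \Delta_{T'}}(I)(\cH')$ and
$$(T\oplus T')_{1,\alpha_1}\cdots(T\oplus T')_{k,\alpha_k}(h\oplus h')=(T_{1,\alpha_1}\cdots T_{k,\alpha_k}h)\oplus(T'_{1,\alpha_1}\cdots T'_{k,\alpha_k}h'),$$
letting $h$ and $h'$ vary independently (and isolating either summand by setting the other of $h,h'$ equal to zero) shows $M_{\bf q}({\bf T}\oplus {\bf T}')=M_{\bf q}({\bf T})\oplus M_{\bf q}({\bf T}')$, hence $\dim M_{\bf q}({\bf T}\oplus {\bf T}')=\dim M_{\bf q}({\bf T})+\dim M_{\bf q}({\bf T}')$ for every ${\bf q}\in \ZZ_+^k$. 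Dividing by $\prod_{i=1}^k(1+n_i+\cdots+n_i^{q_i})$ and passing to the limit over ${\bf q}\in \ZZ_+^k$, which exists for each summand by Theorem \ref{Euler1}, yields $\chi({\bf T}\oplus {\bf T}')=\chi({\bf T})+\chi({\bf T}')$.

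Finally, for the case $\dim\cH'<\infty$ I would observe that $M_{\bf q}({\bf T}')\subseteq \cH'$ forces $\dim M_{\bf q}({\bf T}')\leq\dim\cH'$ for all ${\bf q}$, while the denominator $\prod_{i=1}^k(1+n_i+\cdots+n_i^{q_i})\to\infty$ since each $n_i\geq 2$; therefore $\chi({\bf T}')=0$ and $\chi({\bf T}\oplus {\bf T}')=\chi({\bf T})$. The argument is essentially bookkeeping; the only point that genuinely needs care is the block-diagonal splitting of the defect operator and of its range ${\bf \Delta_{T\oplus T'}}(I)(\cH\oplus\cH')$, on which both the finite-rank claim and the decomposition of $M_{\bf q}$ rest. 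One could equally run the argument through the rank formula of Theorem \ref{Euler1}, using $\rank[A\oplus B]=\rank[A]+\rank[B]$ together with $(id-\Phi_{(T\oplus T')_i}^{q_i+1})=(id-\Phi_{T_i}^{q_i+1})\oplus(id-\Phi_{T'_i}^{q_i+1})$.
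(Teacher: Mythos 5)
Your proposal is correct; the paper states this corollary without proof, and your argument supplies exactly the bookkeeping one would expect: the block-diagonal splitting $\Phi_{(T\oplus T')_i}(Y\oplus Y')=\Phi_{T_i}(Y)\oplus\Phi_{T'_i}(Y')$ gives ${\bf \Delta_{T\oplus T'}^{\bf p}}(I)={\bf \Delta_{T}^{\bf p}}(I)\oplus {\bf \Delta_{T'}^{\bf p}}(I)$, hence membership in the polyball, additivity of the rank of the defect, the decomposition $M_{\bf q}({\bf T}\oplus {\bf T}')=M_{\bf q}({\bf T})\oplus M_{\bf q}({\bf T}')$, and additivity of $\chi$ after dividing by $\prod_{i=1}^k(1+n_i+\cdots+n_i^{q_i})$ and invoking Theorem \ref{Euler1}. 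The final observation that $\dim M_{\bf q}({\bf T}')\leq \dim\cH'$ forces $\chi({\bf T}')=0$ when $\dim\cH'<\infty$ is likewise correct.
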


Given a function $\kappa :\NN\to \NN$ and ${\bf n}^{(i)}:=(n_1^{(i)},\ldots, n_{\kappa(i)}^{(i)})\in \NN^{\kappa(i)}$ for  $i\in \{1,\ldots,p\}$,
we consider the polyball ${\bf B}_{{\bf n}^{(i)}}(\cH_i)$, where $\cH_i$ is a Hilbert space.
 Let ${\bf X}^{(i)}\in{\bf B}_{{\bf n}^{(i)}}(\cH_i)$ with ${\bf X}^{(i)}:=(X_1^{(i)},\ldots, X_{\kappa(i)}^{(i)})$ and
 $X_r^{(i)}:=(X_{r,1}^{(i)},\ldots, X^{(i)}_{r,n_r^{(i)}})\in B(\cH_i)^{n_r^{(i)}}$  for
 $r\in \{1,\ldots, \kappa(i)\}$.
 If ${\bf X}:=({\bf X}^{(1)},\ldots, {\bf X}^{(p)})\in {\bf B}_{{\bf n}^{(1)}}(\cH_1)\times \cdots \times {\bf B}_{{\bf n}^{(p)}}(\cH_p)$, we define the ampliation
 $\widetilde {\bf X}$ by setting
 $\widetilde {\bf X}:=(\widetilde {\bf X}^{(1)},\ldots, \widetilde {\bf X}^{(p)})$, where
 $\widetilde {\bf X}^{(i)}:=(\widetilde X_1^{(i)},\ldots, \widetilde X_{\kappa(i)}^{(i)})$ and
 $\widetilde X_r^{(i)}:=(\widetilde X_{r,1}^{(i)},\ldots, \widetilde X^{(i)}_{r,n_r^{(i)}})$  for
 $r\in \{1,\ldots, \kappa(i)\}$,  and
 $$
 \widetilde X_{r,s}^{(i)}:=I_{\cH_1}\otimes \cdots \otimes I_{\cH_{i-1}}\otimes X_{r,s}^{(i)}\otimes I_{\cH_{i+1}}\otimes I_{\cH_p}
 $$
 for all $i\in \{1,\ldots,p\}$, $r\in \{1,\ldots, \kappa(i)\}$, and $s\in \{1,\ldots, n_r^{(i)}\}$.

 \begin{theorem} \label{multiplicative-prop} Let  ${\bf X}:=({\bf X}^{(1)},\ldots, {\bf X}^{(p)})\in {\bf B}_{{\bf n}^{(1)}}(\cH_1)\times \cdots \times {\bf B}_{{\bf n}^{(p)}}(\cH_p)$ be such that each ${\bf X}^{(i)}$ has finite rank defect. Then the ampliation $\widetilde {\bf X}$ is in the regular polyball ${\bf B}_{({\bf n}^{(1)},\ldots, {\bf n}^{(p)})}(\cH_1\otimes \cdots \otimes \cH_p)$, has finite rank defect,
 and the Euler characteristic satisfies the relation
 $$
 \chi(\widetilde {\bf X})=\prod_{i=1}^p \chi({\bf X}^{(i)}).
 $$
 \end{theorem}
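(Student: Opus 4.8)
The plan is to reduce the identity to the third asymptotic formula of Theorem~\ref{Euler1}, namely
\[
\chi({\bf T})=\lim_{{\bf q}}\frac{\rank\left[(id-\Phi_{T_1}^{q_1+1})\circ\cdots\circ(id-\Phi_{T_k}^{q_k+1})(I)\right]}{\prod_i(1+n_i+\cdots+n_i^{q_i})},
\]
and to exploit the fact that the ampliation places the operators of distinct blocks in tensorially separated, mutually commuting positions.

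First I would record how the completely positive maps transform. Since $\widetilde X_{r,s}^{(i)}$ acts nontrivially only on the $i$-th factor of $\cH_1\otimes\cdots\otimes\cH_p$, the map $\Phi_{\widetilde X_r^{(i)}}$ equals $\Phi_{X_r^{(i)}}$ applied in the $i$-th slot and the identity elsewhere; hence $\Phi_{\widetilde X_r^{(i)}}^{m}$ likewise applies $\Phi_{X_r^{(i)}}^{m}$ in the $i$-th slot, and maps coming from different blocks commute. Decomposing a $0$--$1$ multi-index ${\bf p}$ for $\widetilde{\bf X}$ as $({\bf p}^{(1)},\ldots,{\bf p}^{(p)})$ then yields the tensor factorization ${\bf \Delta}_{\widetilde{\bf X}}^{{\bf p}}(I)=\bigotimes_{i=1}^p {\bf \Delta}_{{\bf X}^{(i)}}^{{\bf p}^{(i)}}(I_{\cH_i})$. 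Each factor is positive because ${\bf X}^{(i)}\in{\bf B}_{{\bf n}^{(i)}}(\cH_i)$, and a tensor product of positive operators is positive, so $\widetilde{\bf X}$ lies in the regular polyball; the $\times_c$ commutation constraints hold because distinct blocks act on disjoint slots. Taking ${\bf p}=(1,\ldots,1)$ gives ${\bf \Delta}_{\widetilde{\bf X}}(I)=\bigotimes_{i=1}^p {\bf \Delta}_{{\bf X}^{(i)}}(I_{\cH_i})$, which has finite rank $\prod_i\rank[{\bf \Delta}_{{\bf X}^{(i)}}(I_{\cH_i})]$.

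Next I would apply the rank formula to $\widetilde{\bf X}$. Writing ${\bf q}=({\bf q}^{(1)},\ldots,{\bf q}^{(p)})$ and using the same slot-separation, the numerator operator factors as $\bigotimes_{i=1}^p D_i$, where $D_i:=(id-\Phi_{X_1^{(i)}}^{q_1^{(i)}+1})\circ\cdots\circ(id-\Phi_{X_{\kappa(i)}^{(i)}}^{q_{\kappa(i)}^{(i)}+1})(I_{\cH_i})$. Since the range of a tensor product of operators is the tensor product of their ranges, the rank of $\bigotimes_i D_i$ equals $\prod_i\rank[D_i]$; and the denominator $\rank[P_{\leq{\bf q}}]$ factors as $\prod_{i=1}^p\prod_{r=1}^{\kappa(i)}(1+n_r^{(i)}+\cdots+(n_r^{(i)})^{q_r^{(i)}})$. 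Therefore the ratio in Theorem~\ref{Euler1} for $\widetilde{\bf X}$ equals, term by term, the product over $i$ of the corresponding ratios for ${\bf X}^{(i)}$.

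Finally I would pass to the limit. By Theorem~\ref{Euler1} the net defining $\chi(\widetilde{\bf X})$ converges, so I may evaluate it along the cofinal sequence all of whose coordinates equal a common $m\to\infty$; along this sequence each ${\bf q}^{(i)}$ tends to infinity in $\ZZ_+^{\kappa(i)}$, so the $i$-th factor tends to $\chi({\bf X}^{(i)})$ by Theorem~\ref{Euler1} applied to ${\bf X}^{(i)}$, and the finite product of these factors tends to $\prod_{i=1}^p\chi({\bf X}^{(i)})$. I expect the only delicate point to be the bookkeeping that turns the single large composed defect expression into a clean tensor product over blocks; once the slot-separation identity for $\Phi_{\widetilde X_r^{(i)}}$ is established, rank-multiplicativity of tensor products and the factorization of $P_{\leq{\bf q}}$ render the remainder mechanical.
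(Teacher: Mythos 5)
Your proposal is correct and follows essentially the same route as the paper: both establish the tensor factorization ${\bf \Delta}_{\widetilde{\bf X}}^{({\bf m}^{(1)},\ldots,{\bf m}^{(p)})}(I)=\bigotimes_i{\bf \Delta}_{{\bf X}^{(i)}}^{{\bf m}^{(i)}}(I_{\cH_i})$ to get membership in the polyball and finite rank, then apply the third asymptotic formula of Theorem \ref{Euler1} together with rank-multiplicativity of tensor products and the factorization of the denominator. Your explicit remark about evaluating the convergent net along a cofinal sequence is a slightly more careful justification of the step the paper states as "which is equal to the product," but the argument is the same.
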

 \begin{proof} For each  ${\bf m}^{(i)}\in \ZZ_+^{\kappa(i)}$ with $0\leq {\bf m}^{(i)}\leq (1,\ldots, 1)$ and $i\in \{1,\ldots, p\}$, we have
 $$
 {\bf \Delta}_{\widetilde {\bf X}}^{({\bf m}^{(1)},\ldots, {\bf m}^{(p)})}(I_{\cH_1\otimes \cdots \otimes \cH_p})
 ={\bf \Delta}^{{\bf m}^{(1)}}_{{\bf X}^{(1)}}(I_{\cH_1})\otimes \cdots \otimes {\bf \Delta}^{{\bf m}^{(p)}}_{{\bf X}^{(p)}}(I_{\cH_p})\geq 0.
 $$
  Consequently, $\widetilde {\bf X}$ is in the regular polyball ${\bf B}_{({\bf n}^{(1)},\ldots, {\bf n}^{(p)})}(\cH_1\otimes \cdots \otimes \cH_p)$ and
 $$
 \rank\left[{\bf \Delta}_{\widetilde {\bf X}} (I_{\cH_1\otimes \cdots \otimes \cH_p})\right]
 =\rank\left[{\bf \Delta}_{{\bf X}^{(1)}}(I_{\cH_1})\right]\cdots \rank\left[{\bf \Delta}_{{\bf X}^{(p)}}(I_{\cH_p})\right]<\infty.
 $$
 Let ${\bf q}^{(i)}:=(q_1^{(i)}, \ldots, q_{\kappa(i)}^{(i)})\in \ZZ_+^{\kappa(i)} $
 for $i\in \{1,\ldots, p\}$. According to Theorem \ref{Euler1}, the Euler characteristic $\chi(\widetilde{\bf X})$ is equal
\begin{equation*}
\begin{split}
\lim
\frac{\rank\left[(id- \Phi_{X_1^{(1)}}^{q_1^{(1)}+1})\circ \cdots \circ
(id-\Phi_{X_{\kappa(1)}^{(1)}}^{q_{\kappa(1)}^{(1)}+1})(I_{\cH_1})
\otimes \cdots \otimes
(id-\Phi_{X_1^{(p)}}^{q_1^{(p)}+1})\circ \cdots \circ
(id- \Phi_{X_{\kappa(p)}^{(p)}}^{q_{\kappa(p)}^{(p)}+1})(I_{\cH_p})\right]}
{\prod_{r=1}^{\kappa(1)}(1+n_r^{(1)}+\cdots +(n_r^{(1)})^{q^{(1)}_r})\cdots \prod_{r=1}^{\kappa(p)}(1+n_r^{(p)}+\cdots +(n_r^{(p)})^{q^{(p)}_r})},
\end{split}
\end{equation*}
where the limit is taken  over ${{\bf q}^{(1)}\in \ZZ_+^{\kappa(1)},\ldots,{\bf q}^{(p)}\in \ZZ_+^{\kappa(p)}}$,
which is equal to to the product
$$
\prod_{i=1}^p\lim_{{\bf q}^{(i)}\in \ZZ_+^{\kappa(i)}}
\frac{\rank\left[ (id- \Phi_{X_1^{(i)}}^{q_1^{(i)}+1})\circ \cdots \circ
(id-\Phi_{X_{\kappa(i)}^{(i)}}^{q_{\kappa(i)}^{(i)}+1})(I_{\cH_i})
           \right]}
{\prod_{r=1}^{\kappa(i)}(1+n_r^{(i)}+\cdots +(n_r^{(i)})^{q^{(i)}_r})}.
$$
Due to Theorem \ref{Euler1}, the latter  product is equal to
$\prod_{i=1}^p \chi({\bf X}^{(i)})$.
The proof is complete.
\end{proof}

\bigskip

\section{Invariant subspaces, Euler characteristic, and classification}

In this section, we give a  characterization of the invariant subspaces of the tensor product $\otimes_{i=1}^k F^2(H_{n_i})$ with positive defect operators. We show that the Euler characteristic completely classifies the  finite rank  Beurling type invariant subspaces  of $\otimes_{i=1}^k F^2(H_{n_i})$ and prove some of its basic properties, including the fact that its  range   is the interval $[0,\infty)$.

Let ${\bf S}:=({\bf S}_1,\ldots, {\bf S}_k)$, where ${\bf S}_i:=({\bf S}_{i,1},\ldots, {\bf S}_{i, n_i})$, be the universal model of the abstract polyball ${\bf B_n}$, and let $\cH$ be a Hilbert space. We say that $\cM$
 is an invariant subspace of $\otimes_{i=1}^k F^2(H_{n_i})\otimes
  \cH$ or  that $\cM$  is    invariant  under  ${\bf S}\otimes I_\cH$ if it is invariant under each operator
  ${\bf S}_{i,j}\otimes I_\cH$ for $i\in \{1,\ldots, k\}$ and $j\in \{1,\ldots, n_j\}$.

 \begin{definition} Given two invariant subspaces $\cM$ and $\cN$ under ${\bf S}\otimes I_\cH$,  we say that they are unitarily equivalent if there is a unitary operator $U:\cM\to \cN$ such that $U({\bf S}_{i,j}\otimes I_\cH)|_\cM=({\bf S}_{i,j}\otimes I_\cH)|_\cN U$ for any $i\in \{1,\ldots, k\}$  and  $j\in \{1,\ldots, n_i\}$.
 \end{definition}

We   define
 the {\it right creation  operator} $R_{i,j}:F^2(H_{n_i})\to
F^2(H_{n_i})$     by setting
\begin{equation*}
R_{i,j} e_\alpha^i:=  e^i_ {\alpha {g_j}}, \qquad \alpha\in \FF_{n_i}^+,
\end{equation*}
 and
 the operator ${\bf R}_{i,j}$ acting on the tensor Hilbert space
$F^2(H_{n_1})\otimes\cdots\otimes F^2(H_{n_k})$. We also denote ${\bf R}:=({\bf R}_1,\ldots, {\bf R}_k)$, where  ${\bf R}_i:=({\bf R}_{i,1},\ldots,{\bf R}_{i,n_i})$.
Let $\varphi=\sum_{\beta_i\in \FF_{n_i}^+} c_{\beta_1,\ldots,\beta_k}
   e^1_{\beta_1}\otimes \cdots \otimes  e^k_{\beta_k}$ be in $\otimes_{i=1}^kF^2(H_{n_i})$ and consider the formal power series
   $\widetilde\varphi({\bf R}_{i,j}):=\sum_{\beta_i\in \FF_{n_i}^+} c_{\beta_1,\ldots,\beta_k}
   {\bf R}_{1,\widetilde\beta_1} \cdots  {\bf R}_{k,\widetilde
   \beta_k}$, where
$\widetilde \beta_i=g_{j_p}^i\cdots g_{j_1}^i$ denotes the reverse of
$\beta_i=g_{j_1}^i\cdots g_{j_p}^i\in \FF_{n_i}^+$.
   Note that  $\widetilde\varphi({\bf R}_{i,j})(e^1_{\gamma_1}\otimes \cdots \otimes  e^k_{\gamma_k})$ is in $\otimes_{i=1}^kF^2(H_{n_i})$. We say that $\varphi$ is a right multiplier of $\otimes_{i=1}^kF^2(H_{n_i})$ if
   $$
   \sup_{p\in \cP, \|p\|\leq 1}\| \widetilde\varphi({\bf R}_{i,j})p\|<\infty,
   $$
   where $\cP$ is the set of all polynomials $\sum a_{\alpha_1,\ldots,\alpha_k}
   e^1_{\alpha_1}\otimes \cdots \otimes  e^k_{\alpha_k}$ in $\otimes_{i=1}^kF^2(H_{n_i})$. In this case, there is a unique bounded operator acting on $\otimes_{i=1}^kF^2(H_{n_i})$, denoted also by  $\widetilde\varphi({\bf R}_{i,j})$, such that
  $$
   \widetilde\varphi({\bf R}_{i,j})p=\sum_{\beta_i\in \FF_{n_i}^+} c_{\beta_1,\ldots,\beta_k}
   {\bf R}_{1,\widetilde\beta_1} \cdots  {\bf R}_{k,\widetilde
   \beta_k}p,\qquad p\in \cP.
   $$
The set of all  operators $\widetilde\varphi({\bf R}_{i,j})$ satisfying the conditions above is a Banach algebra denoted by $R^\infty({\bf B_n})$. We proved in \cite{Po-Berezin-poly} that $F^\infty({\bf B_n})'=R^\infty({\bf B_n})$ and
$F^\infty({\bf B_n})''=F^\infty({\bf B_n})$, where $'$ stands for the commutant.

According to Theorem 5.1 from \cite{Po-Berezin-poly}, if a subspace $\cM\subset \otimes_{i=1}^k F^2(H_{n_i})\otimes \cH$ is  co-invariant  under each operator ${\bf S}_{i,j}\otimes I_\cH$, then
\begin{equation*}
\overline{\text{\rm span}}\,\left\{\left({\bf S}_{1,\beta_1}\cdots {\bf S}_{k,\beta_k}\otimes
I_\cH\right)\cM:\ \beta_1\in \FF_{n_1}^+,\ldots, \beta_k\in \FF_{n_k}^+\right\}=\otimes_{i=1}^k F^2(H_{n_i})\otimes \cE,
\end{equation*}
where $\cE:=({\bf P}_\CC\otimes I_\cH)(\cM)$.
Consequently,
 a subspace
$\cR\subseteq \otimes_{i=1}^k F^2(H_{n_i})\otimes \cH$ is
 reducing under
${\bf S}\otimes I_\cH$
if and only if   there exists a subspace $\cG\subseteq \cH$ such
that
$
 \cR=\otimes_{i=1}^k F^2(H_{n_i})\otimes \cG.
$

 It is well known \cite{Ru} that the lattice of the invariant subspaces for the Hardy space $H^2(\DD^k)$ is very complicated and contains many  invariant subspaces which are not of Beurling type. The same complicated situation occurs in the case of the tensor product $\otimes_{i=1}^k F^2(H_{n_i})$.
Following the classical case \cite{Be}, we say that $\cM$ is a {\it Beurling type invariant subspace} for ${\bf S}\otimes I_\cH$ if there is  an inner  multi-analytic operator
 $\Psi:\otimes_{i=1}^k F^2(H_{n_i})\otimes \cE \to
 \otimes_{i=1}^k F^2(H_{n_i})\otimes \cH$ with respect to ${\bf S}$,
 where  $\cE$ is a Hilbert space, such that
$\cM=\Psi\left[\otimes_{i=1}^k F^2(H_{n_i})\otimes \cE\right]$. In this case, $\Psi$ can be chosen to be isometric. We recall that $\Psi$ is  multi-analytic  if $\Psi({\bf S}_{i,j}\otimes I_\cH)=({\bf S}_{i,j}\otimes I_\cE)\Psi$ for all $i,j$. More about multi-analytic operators on Fock spaces can be found in \cite{Po-charact} and \cite{Po-analytic}.
In \cite{Po-Berezin-poly}, we proved that $\cM$ is a Beurling type invariant subspace  for ${\bf S}\otimes I_\cH$
if and only if   $$
   (id-\Phi_{{\bf S}_1\otimes I_\cH})\circ \cdots \circ
    (id-\Phi_{{\bf S}_k\otimes I_\cH})(P_\cM)\geq 0,
   $$
   where $P_\cM$ is the orthogonal projection onto $\cM$.
We introduce the {\it defect operator}  of $\cM$ be setting $\Delta_\cM:={\bf \Delta_{S\otimes {\it I}_\cH}}(P_\cM)$.

In what follows, we present a more direct and more transparent proof for the characterization of the  invariant subspaces of $\otimes_{i=1}^kF^2(H_{n_i})$ with positive defect operator,  which complements the corresponding result from \cite{Po-Berezin-poly} (see Corollary 5.3).

\begin{theorem}\label{Beurling} A subspace  $\cM\subseteq \otimes_{i=1}^kF^2(H_{n_i})$, $n_i\geq 1$,  is  invariant under the universal model ${\bf S}$ and    has positive  defect operator $\Delta_\cM$ if and only if there is a sequence $\{\psi_s\}_{s=1}^N$, where $N\in \NN$ or $N=\infty$,  of right multipliers of  $\otimes_{i=1}^kF^2(H_{n_i})$ such that $\{\widetilde\psi_s({ \bf R}_{i,j})\}_{s=1}^N$ are isometries with orthogonal ranges and
$$
P_\cM=\sum_{s=1}^N \widetilde\psi_s({\bf R}_{i,j})\widetilde\psi_s({\bf R}_{i,j})^*,
$$
 where the convergence is in the strong operator topology. The sequence $\{\psi_s\}_{s=1}^N$ is uniquely determined up to a unitary equivalence.

In addition, we can choose the sequence $\{\psi_s\}_{s=1}^N$ such that  each $\psi_s$ is in the range of $\Delta_\cM$.
We  also have $N=\rank(\Delta_\cM^{1/2})$
and
$$
  \Delta_\cM\xi=\sum_{t=1}^N\left<\xi, \psi_s\right>\psi_s, \qquad \xi\in \otimes_{i=1}^kF^2(H_{n_i}).
  $$
\end{theorem}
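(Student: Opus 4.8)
The plan is to prove both implications, reading off the quantitative statements (the formula for $\Delta_\cM$, the value of $N$, the membership $\psi_s\in\ran\Delta_\cM$) as byproducts. I would first dispatch the easy direction. Suppose $P_\cM=\sum_{s=1}^N\widetilde\psi_s({\bf R}_{i,j})\widetilde\psi_s({\bf R}_{i,j})^*$ with the $\widetilde\psi_s({\bf R}_{i,j})$ isometries having orthogonal ranges. Each lies in $R^\infty({\bf B_n})=F^\infty({\bf B_n})'$, hence commutes with every ${\bf S}_{i,j}$, so each $\ran\widetilde\psi_s({\bf R}_{i,j})$ is ${\bf S}$-invariant and therefore $\cM$ is invariant. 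Writing $\Lambda_i:=id-\Phi_{{\bf S}_i}$ and using ${\bf S}_{i,a}\widetilde\psi_s=\widetilde\psi_s{\bf S}_{i,a}$ one computes $\Lambda_i(\widetilde\psi_s\widetilde\psi_s^*)=\widetilde\psi_s\Lambda_i(I)\widetilde\psi_s^*$, and iterating over $i=1,\dots,k$ gives $\Delta_\cM=\sum_s\widetilde\psi_s{\bf P}_\CC\widetilde\psi_s^*\ge 0$. Setting $\psi_s:=\widetilde\psi_s({\bf R}_{i,j})1$, the isometry and orthogonality hypotheses make $\{\psi_s\}$ orthonormal; since ${\bf P}_\CC=\langle\,\cdot\,,1\rangle 1$ this yields the stated formula $\Delta_\cM\xi=\sum_s\langle\xi,\psi_s\rangle\psi_s$, so $\Delta_\cM$ is the projection onto $\overline{\text{span}}\{\psi_s\}$ and $N=\rank(\Delta_\cM^{1/2})$ is immediate.

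For the converse I would assume $\cM$ is ${\bf S}$-invariant with $\Delta_\cM\ge 0$. Because ${\bf S}$ is pure, the telescoping identity $\sum_{p=0}^{N}\Phi_{{\bf S}_i}^{p}\circ\Lambda_i=id-\Phi_{{\bf S}_i}^{N+1}\to id$ (strongly, on positive operators) inverts each $\Lambda_i$ and produces the reconstruction formula $P_\cM=\sum_{\beta_1,\dots,\beta_k}{\bf S}_{1,\beta_1}\cdots{\bf S}_{k,\beta_k}\,\Delta_\cM\,{\bf S}_{k,\beta_k}^*\cdots{\bf S}_{1,\beta_1}^*$ in the strong operator topology. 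The decisive and, I expect, hardest step is to show that $\Delta_\cM$ is the orthogonal projection onto the wandering subspace $\cW:=\cM\ominus\bigvee_{i=1}^k\cM_i$, where $\cM_i:=\overline{\text{span}}\{{\bf S}_{i,j}\cM\}$ and $\Phi_{{\bf S}_i}(P_\cM)=P_{\cM_i}$. The engine is the elementary fact that three projections with $C\le A$, $C\le B$ and $I-A-B+C\ge 0$ must satisfy $AB=BA=C$ (test the quadratic form on $\ran A$ and use $C\le B$); applied with $A=P_{\cM_i}$, $B=P_{\cM_l}$ and the corresponding $C=\Phi_{{\bf S}_i}\Phi_{{\bf S}_l}(P_\cM)$, and pushed through an induction on $k$ that peels off one variable at a time, this forces the projections $\Phi_{{\bf S}_J}(P_\cM)$ to be products of commuting projections, whence $\Delta_\cM=\prod_{i=1}^k(P_\cM-P_{\cM_i})=P_\cW$ and, in particular, $\ran\Delta_\cM=\cW$ is orthogonal to every ${\bf S}_{i,j}\cM$.

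With this structure in hand the multipliers are produced directly. I would choose an orthonormal basis $\{\psi_s\}_{s=1}^N$ of $\ran\Delta_\cM=\cW$, which gives $N=\rank(\Delta_\cM^{1/2})$, the formula $\Delta_\cM\xi=\sum_s\langle\xi,\psi_s\rangle\psi_s$, and $\psi_s\in\ran\Delta_\cM$. The wandering relation $\cW\perp{\bf S}_{i,j}\cM$, combined with the isometry structure of the ${\bf S}_{i,j}$, shows by the standard Fock-space orthogonality computation that $\{{\bf S}_{1,\beta_1}\cdots{\bf S}_{k,\beta_k}\psi_s\}_{\beta,s}$ is an orthonormal system whose closed span equals $\cM$ by the reconstruction formula. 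Defining $\widetilde\psi_s({\bf R}_{i,j})$ as the operator determined by $\widetilde\psi_s({\bf R}_{i,j})1=\psi_s$ and commutation with ${\bf S}$, so that $\widetilde\psi_s({\bf R}_{i,j})(e^1_{\beta_1}\otimes\cdots\otimes e^k_{\beta_k})={\bf S}_{1,\beta_1}\cdots{\bf S}_{k,\beta_k}\psi_s$, both boundedness (hence that $\psi_s$ is a genuine right multiplier) and the isometry property fall out of this orthonormality; orthonormality across distinct $s$ gives orthogonal ranges; and $\sum_s\widetilde\psi_s({\bf R}_{i,j})\widetilde\psi_s({\bf R}_{i,j})^*$ is precisely the projection onto $\cM$, i.e. $P_\cM$.

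Finally, for uniqueness I would note that any two admissible systems $\{\psi_s\}$ and $\{\phi_t\}$ are orthonormal bases of the same space $\ran\Delta_\cM$, hence related by a unitary matrix, and commutation with ${\bf S}$ promotes this into a unitary equivalence of the two representations; alternatively, uniqueness can be read off the essentially unique isometric multi-analytic Beurling representation $\cM=\Psi[\otimes_{i=1}^kF^2(H_{n_i})\otimes\cE]$ recorded in \cite{Po-Berezin-poly}, whose columns relative to an orthonormal basis of $\cE$ are exactly the $\widetilde\psi_s({\bf R}_{i,j})$.
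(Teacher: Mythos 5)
Your argument is correct in its overall architecture, but it takes a genuinely different route from the paper on the hard direction. The paper never identifies the wandering subspace directly: it observes that ${\bf A}:={\bf S}|_\cM$ is a pure element of ${\bf B_n}(\cM)$ (this is where $\Delta_\cM\geq 0$ enters), takes its noncommutative Berezin kernel ${\bf K_A}$, which is then an isometry, and manufactures the multipliers as $\psi_s={\bf K_A^*}(1\otimes\chi_s)=\Delta_\cM^{1/2}\chi_s$ for an orthonormal basis $\{\chi_s\}$ of the defect space; the identity $\sum_s\widetilde\psi_s({\bf R}_{i,j})\widetilde\psi_s({\bf R}_{i,j})^*=P_\cM$ then falls out of ${\bf K_A^*}{\bf K_A}=I_\cM$, and the isometry property of each $\widetilde\psi_s({\bf R}_{i,j})$ is obtained by a reducing-subspace argument rather than by exhibiting an explicit orthonormal system. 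Your route — prove $\Delta_\cM=P_\cW$ for the wandering subspace $\cW=\cM\ominus\bigvee_i\cM_i$ via the three-projections lemma, then run the classical Beurling--Halmos translation argument — is more elementary and more geometric, and it makes the identities $N=\rank(\Delta_\cM^{1/2})$ and $\Delta_\cM\xi=\sum_s\langle\xi,\psi_s\rangle\psi_s$ transparent (they say exactly that $\Delta_\cM$ is the projection onto $\cW$), whereas the paper has to extract them a posteriori. The price is that your approach leans on the lattice structure of the projections $\Phi_{{\bf S}_{i_1}}\circ\cdots\circ\Phi_{{\bf S}_{i_r}}(P_\cM)$, which the Berezin-kernel argument sidesteps entirely.

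One step you state too quickly: for $k\geq 3$ the hypothesis is only that the \emph{full} alternating sum $\Delta_\cM=(id-\Phi_{{\bf S}_1})\circ\cdots\circ(id-\Phi_{{\bf S}_k})(P_\cM)$ is positive, while your application of the three-projections lemma to the pair $(P_{\cM_i},P_{\cM_l})$ needs $P_\cM-P_{\cM_i}-P_{\cM_l}+\Phi_{{\bf S}_i}\Phi_{{\bf S}_l}(P_\cM)\geq 0$, and the later stages of the induction need positivity of expressions such as $(id-\Phi_{{\bf S}_3})(id-\Phi_{{\bf S}_1}\Phi_{{\bf S}_2})(P_\cM)$. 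These do follow, but require an argument: since each $\Phi_{{\bf S}_j}$ is pure, summing the Neumann series $\sum_{n\geq 0}\Phi_{{\bf S}_j}^n$ against the full defect recovers the partial defects $(id-\Phi_{{\bf S}_{i_1}})\circ\cdots\circ(id-\Phi_{{\bf S}_{i_r}})(P_\cM)$ as unordered sums of positive terms (this is essentially the cited Taylor-type representation around the defect), and the remaining positivity claims decompose as positive combinations of these via identities like $id-\Phi_1\Phi_2=(id-\Phi_1)+\Phi_1(id-\Phi_2)$. With that supplied, the induction closes, $\Delta_\cM=P_\cW$, and the rest of your argument (orthonormality of $\{{\bf S}_{1,\beta_1}\cdots{\bf S}_{k,\beta_k}\psi_s\}$, completeness from the reconstruction formula, isometry and orthogonal ranges of the $\widetilde\psi_s({\bf R}_{i,j})$, and uniqueness) is sound.
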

\begin{proof} Assume that $\cM\subseteq \otimes_{i=1}^kF^2(H_{n_i})$ is an invariant subspace under ${\bf S}$ and $\Delta_\cM:={\bf \Delta_{S}}(P_\cM)\geq 0$.
Let  ${\bf A}=({ A}_1,\ldots, { A}_k)$ with $A_i:=({\bf S}_{i,1}|_\cM,\ldots, {\bf S}_{i,n_i}|_\cM)$ and note that ${\bf A}$ is a pure element in ${\bf B_n}(\cM)$.
 The  noncommutative Berezin kernel associated with
   ${\bf A}$,
   $${\bf K_{A}}: \cM \to \left(\otimes_{i=1}^kF^2(H_{n_i})\right) \otimes  \overline{{\bf \Delta_{A}}(I_\cM)^{1/2} (\cM)},
   $$
is an isometry. Note that $\Delta_\cM(\cM)\subseteq \cM$ and  $\Delta_\cM|_\cM={\bf \Delta_{A}}(I_\cM)$.  Consequently,  we have $\Delta_\cM(\cM^\perp)\subseteq \cM^\perp$ and
 ${\bf \Delta_{A}}(I_\cM)^{1/2} (\cM)=\Delta_\cM^{1/2}(\otimes_{i=1}^kF^2(H_{n_i}))$.
Consider the extended Berezin kernel
$${\bf \widehat K_{A}}: \otimes_{i=1}^kF^2(H_{n_i}) \to \left(\otimes_{i=1}^kF^2(H_{n_i})\right) \otimes  \overline{\Delta_\cM^{1/2}(\otimes_{i=1}^kF^2(H_{n_i}))}
   $$
   defined by ${\bf \widehat K_{A}}|_\cM:={\bf K_{A}}$ and ${\bf \widehat K_{A}}|_{\cM^\perp}:=0$.
  Let $\{\chi_s\}_{s=1}^N$ be an orthonormal basis for the defect space $\overline{\Delta_\cM^{1/2}(\otimes_{i=1}^kF^2(H_{n_i}))}$ and  set
  $$
  \psi_s:={\bf \widehat K_{A}^*}(1\otimes \chi_s)={\bf K_{A}^*}(1\otimes \chi_s)=
  {\bf \Delta_{A}}(I_\cM)^{1/2}\chi_s=
  \Delta_\cM^{1/2}\chi_s.
  $$
 If $p=\sum a_{\alpha_1,\ldots, \alpha_k} e^1_{\alpha_1}\otimes\cdots \otimes  e^k_{\alpha_k} $ is any polynomial in  $\otimes_{i=1}^kF^2(H_{n_i})$, we have
 \begin{equation*}
 \begin{split}
 {\bf K_{A}^*}(p\otimes \chi_s)=p({\bf S}_{i,j}|_\cM) \Delta_\cM^{1/2}\chi_s=
 p({\bf S}_{i,j})\psi_s=p\psi_s=\widetilde\psi_s({\bf R}_{i,j})p.
 \end{split}
 \end{equation*}
 Hence,
 \begin{equation*}
 \|\widetilde\psi_s({\bf R}_{i,j})p\|=\|{\bf K_{A}^*}(p\otimes \chi_s)\|\leq \|p\|.
 \end{equation*}
  Since the polynomials are dense in $\otimes_{i=1}^kF^2(H_{n_i})$, we deduce that $ \psi_s$ is a  right multiplier of  $\otimes_{i=1}^kF^2(H_{n_i})$.
  For each $s\in \{1,\ldots, N\}$,  we define the linear operator
  $\Gamma_s: \otimes_{i=1}^kF^2(H_{n_i})\to \otimes_{i=1}^kF^2(H_{n_i})$ by setting
  $$
  \left< \Gamma_sf,g\right>:=\left< {\bf \widehat K_{A}} f, g\otimes \chi_s\right>, \qquad f,g\in \otimes_{i=1}^kF^2(H_{n_i}).
  $$
   Note that
   \begin{equation*}
   \begin{split}
   \sum_{s=1}^N \|\Gamma_sf\|^2&=\sum_{s=1}^N \sum_{\alpha_i\in \FF_{n_i}^+}
   |\left< \Gamma_sf,e^1_{\alpha_1}\otimes \cdots \otimes e^k_{\alpha_k}\right>|^2
   =\sum_{s=1}^N \sum_{\alpha_i\in \FF_{n_i}^+}
   \left|\left<{\bf \widehat K_{A}}f,(e^1_{\alpha_1}\otimes \cdots \otimes e^k_{\alpha_k})\otimes \chi_s\right>\right|^2\\
   &=\|{\bf \widehat K_{A}}f\|^2\leq \|{\bf K_A} f\|^2=\|f\|^2
   \end{split}
   \end{equation*}
   for any $f\in \otimes_{i=1}^kF^2(H_{n_i})$. Consequently,
   $
   \sum_{s=1}^N \Gamma_s^* \Gamma_s
   $
   is convergent in the strong operator topology and
   $$
    \sum_{s=1}^N \Gamma_s^* \Gamma_s={\bf \widehat K_{A}^*}{\bf \widehat K_{A}}=
    \left[\begin{matrix} {\bf K_{A}^*}{\bf K_{A}}&0\\
    0&0 \end{matrix}\right]=\left[\begin{matrix} I_\cM&0\\
    0&0 \end{matrix}\right]=P_\cM.
   $$
    Note also that
   \begin{equation*}
   \begin{split}
   \left< \Gamma_s f,p\right>=\left<f, {\bf K_A^*}(p\otimes \chi_s)\right>
   =\left<f,  \widetilde\psi_s({\bf R}_{i,j}) p\right>=\left<\widetilde\psi_s({\bf R}_{i,j})^*f,   p\right>.
   \end{split}
   \end{equation*}
  Since the polynomials are dense in $\otimes_{i=1}^kF^2(H_{n_i})$, we deduce that
  $\Gamma_s=\widetilde\psi_s({\bf R}_{i,j})^*$ and, consequently, $\sum_{s=1}^N \widetilde\psi_s({\bf R}_{i,j})\widetilde\psi_s({\bf R}_{i,j})^* =P_\cM$, where the  convergence is  in the strong operator topology.
  On the other hand, since $\Phi_{{\bf S}_i}$ is WOT-continuous, so is the defect map
  ${\bf \Delta_S}:=(id-\Phi_{{\bf S}_1})\circ\cdots\circ (id -\Phi_{{\bf S}_k})$. Consequently,
  \begin{equation*}
  \begin{split}
  \Delta_\cM&={\bf \Delta_{S}}(P_\cM)=\sum_{s=1}^N \widetilde\psi_s({\bf R}_{i,j}){\bf \Delta_S}(I)\widetilde\psi_s({\bf R}_{i,j})^* \\
  &=\sum_{s=1}^N \widetilde\psi_s({\bf R}_{i,j})P_\CC\widetilde\psi_s({\bf R}_{i,j})^*.
  \end{split}
  \end{equation*}
 Hence, we deduce that
 $$
  \Delta_\cM\xi=\sum_{s=1}^N \widetilde\psi_s({\bf R}_{i,j})\left<\widetilde\psi_s({\bf R}_{i,j})^*\xi, 1\right>
  =\sum_{t=1}^N\left<\xi, \psi_s\right>\psi_s, \qquad \xi\in \otimes_{i=1}^kF^2(H_{n_i}).
  $$
Let
    $\Psi:(\otimes_{i=1}^k F^2(H_{n_i}))\otimes
\CC^N \to \otimes_{i=1}^k F^2(H_{n_i})$  be the bounded operator having the $1\times N$ matrix representation
$$
[\widetilde\psi_1({\bf R}_{i,j}), \widetilde\psi_2({\bf R}_{i,j}), \ldots],
$$
  where $\CC^\infty$ stands for $\ell^2(\NN)$.
 Note that $\Psi$ is a multi-analytic operator with respect to the universal model ${\bf S}$ and $\Psi \Psi^*=P_\cM$. Therefore, $\Psi$ is a partial isometry.
Since ${\bf S}_{i,j}$ are isometries, the initial space of $\Psi$, i.e.
$$\Psi^*(\otimes_{i=1}^k F^2(H_{n_i}))
 =\{x\in (\otimes_{i=1}^k F^2(H_{n_i}))\otimes \CC^N: \ \|\Psi x\|=\|x\|\}$$
  is
 reducing under each operator ${\bf S}_{i,j}\otimes I_{\CC^N}$.
 Consequently, since ${\bf \Delta_{S}}(I)=P_\CC$,  to prove that $\Psi$ is an isometry, it is enough to show that
$$\CC= P_\CC \widetilde\psi_s({\bf R}_{i,j})^* (\otimes_{i=1}^k F^2(H_{n_i}))
 \quad \text{ for each }  s\in \{1,\ldots, N\}.
 $$
The latter equality is true since
$$
P_\CC\widetilde\psi_s({\bf R}_{i,j})^* ( \psi_s)=\left<\widetilde\psi_s({\bf R}_{i,j})^* (\psi_s),1\right>=\| \psi_s\|^2=1.
$$
Therefore, $\Psi$ is an isometry.

  To prove the converse of the theorem, assume that there is a sequence $\{\psi_s\}_{s=1}^N$ of right multipliers of  $\otimes_{i=1}^kF^2(H_{n_i})$ such that $\{\widetilde\psi_s({ \bf R}_{i,j})\}_{s=1}^N$ are isometries with orthogonal ranges and
$$
P_\cM=\sum_{s=1}^N \widetilde\psi_s({\bf R}_{i,j})\widetilde\psi_s({\bf R}_{i,j})^*.
$$
Since ${\bf S}_{i,j}$ commutes with ${\bf R}_{r,q}$ for any $i,r\in \{1,\ldots, k\}$, $j\in \{1,\ldots, n_i\}$, and $q\in \{1,\ldots, n_r\}$,  and $\{\widetilde\psi_s({\bf R}_{i,j})\}_{s=1}^N$ are isometries with orthogonal ranges,  we deduce that
$P_\cM {\bf S}_{i,j}P_\cM= {\bf S}_{i,j}P_\cM$. Therefore, $\cM$ is an  invariant subspace under the universal model ${\bf S}$. Moreover, we have
$$
\Delta_\cM=(id-\Phi_{{\bf S}_1})\circ\cdots\circ (id -\Phi_{{\bf S}_k})(P_\cM)=\sum_{s=1}^N \widetilde\psi_s({\bf R}_{i,j}){\bf \Delta_S}(I)\widetilde\psi_s({\bf R}_{i,j})^*\geq 0.
$$

Now, we prove that the sequence $\{\psi_s\}_{s=1}^N$ is uniquely determined up to a unitary equivalence.   Let $\{\psi_s'\}_{s=1}^{N'}$  be another sequence
of right multipliers of  $\otimes_{i=1}^kF^2(H_{n_i})$ such that $\{\widetilde\psi_s'({\bf R}_{i,j})\}_{s=1}^{N'}$ are isometries with orthogonal ranges and
$$
P_\cM=\sum_{s=1}^{N'} \widetilde\psi_s'({\bf R}_{i,j})\widetilde\psi_s'({\bf R}_{i,j})^*.
$$
As above, we have
$$
  \Delta_\cM\xi=
  \sum_{t=1}^N\left<\xi, \psi_s\right>\psi_s=\sum_{t=1}^{N'}\left<\xi, \psi_s'\right>\psi_s', \qquad \xi\in \otimes_{i=1}^kF^2(H_{n_i}).
  $$
 Consequently, if $\Delta_\cM$ has finite rank then $\{\psi_s\}_{s=1}^N$ and $\{\psi_s'\}_{s=1}^{N'}$    are orthonormal bases  for the range of
 $\Delta_\cM$, thus $N=N'\in \NN$.    If $\Delta_\cM$  does not have finite rank, similar arguments, show that $N=N'=\infty$.
Now, note that
$$
\cM=\bigoplus_{s=1}^N \left(\otimes_{i=1}^kF^2(H_{n_i})\right)\psi_s=\bigoplus_{s=1}^N \left(\otimes_{i=1}^kF^2(H_{n_i})\right)\psi_s',
$$
and $\{(e^1_{\alpha_1}\otimes \cdots \otimes e^k_{\alpha_k})\psi_s:\ \alpha_i\in \FF_{n_i}^+, s\in \{1,\ldots, N\}\}$ and $\{(e^1_{\alpha_1}\otimes \cdots \otimes e^k_{\alpha_k})\psi_s':\ \alpha_i\in \FF_{n_i}^+, s\in \{1,\ldots, N\}\}$
 are orthonormal bases for $\cM$.
 Define the unitary operator $U:\cM\to \cM$ by setting
 $$U(  (e^1_{\alpha_1}\otimes \cdots \otimes e^k_{\alpha_k})\psi_s):=(e^1_{\alpha_1}\otimes \cdots \otimes e^k_{\alpha_k})\psi_s.
 $$
 It is clear than $U({\bf S}_{i,j}|_\cM)=({\bf S}_{i,j}|_\cM)U$ for any
 $i\in \{1,\ldots, k\}$ and  $j\in \{1,\ldots, n_i\}$. The proof is complete.
 \end{proof}

   If $\cM$  is a  Beurling type invariant subspace  of  ${\bf S}\otimes I_\cH$, then $({\bf S}\otimes I_\cH)|_\cM:=(({\bf S}_1\otimes I_\cH)|_\cM,\ldots, ({\bf S}_k\otimes I_\cH)|_\cM)$ is in the polyball ${\bf B_n}(\cM)$, where  $({\bf S}_i\otimes I_\cH)|_\cM:=(({\bf S}_{i,1}\otimes I_\cH)|_\cM,\ldots, ({\bf S}_{i, n_i}\otimes I_\cH)|_\cM)$. We say that $\cM$ has finite rank if $({\bf S}\otimes I_\cH)|_\cM$ has finite rank.
The next result shows that the Euler characteristic  completely classifies the
 finite rank  Beurling type invariant subspaces  of ${\bf S}\otimes I_\cH$  which do not contain reducing subspaces. In particular, the Euler characteristic classifies  the  finite rank  Beurling type invariant subspaces  of $ F^2(H_{n_1})\otimes\cdots \otimes F^2(H_{n_k})$.

\begin{theorem}\label{classification} Let $\cM$ and $\cN$ be  invariant subspaces  of $\otimes_{i=1}^k F^2(H_{n_i})\otimes \cH$ which do not contain reducing subspaces of ${\bf S}\otimes I_\cH$ and such that the defect operators $\Delta_\cM$ and $\Delta_\cN$ are positive and  have finite ranks. Then $\cM$ and $\cN$ are unitarily equivalent if and only if
$$
\chi(({\bf S}\otimes I_\cH)|_\cM)=\chi(({\bf S}\otimes I_\cH)|_\cN).
$$
\end{theorem}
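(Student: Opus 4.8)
The plan is to reduce each subspace to the standard Beurling model ${\bf S}\otimes I_\cE$ and to show that the Euler characteristic of the restricted tuple recovers precisely the wandering multiplicity $\dim\cE$; the theorem then follows because this multiplicity is a complete invariant for the models.

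First I would dispose of the easy implication. If $\cM$ and $\cN$ are unitarily equivalent, there is a unitary $U:\cM\to\cN$ with $U({\bf S}_{i,j}\otimes I_\cH)|_\cM=({\bf S}_{i,j}\otimes I_\cH)|_\cN U$ for all $i,j$. Thus the $k$-tuples $({\bf S}\otimes I_\cH)|_\cM$ and $({\bf S}\otimes I_\cH)|_\cN$ are unitarily equivalent, and $U$ carries the linear manifolds $M_{\bf q}$ used to define $\chi$ in Theorem \ref{Eul} onto one another, so that $\dim M_{\bf q}$ agrees for every ${\bf q}$. Hence $\chi(({\bf S}\otimes I_\cH)|_\cM)=\chi(({\bf S}\otimes I_\cH)|_\cN)$; that is, the Euler characteristic is a unitary invariant.

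For the converse, the crux is to identify $\chi$ with the wandering multiplicity. Since $\Delta_\cM\geq 0$ and $\cM$ is invariant, Theorem \ref{Beurling} (in the version for ${\bf S}\otimes I_\cH$ from \cite{Po-Berezin-poly}) furnishes an isometric inner multi-analytic operator $\Psi_\cM:\left(\otimes_{i=1}^k F^2(H_{n_i})\right)\otimes\cE_\cM\to\left(\otimes_{i=1}^k F^2(H_{n_i})\right)\otimes\cH$ with range $\cM$ and $\dim\cE_\cM=\rank(\Delta_\cM)<\infty$. As $\Psi_\cM$ is an isometry intertwining ${\bf S}\otimes I_{\cE_\cM}$ with ${\bf S}\otimes I_\cH$, it implements a unitary equivalence $({\bf S}\otimes I_\cH)|_\cM\cong{\bf S}\otimes I_{\cE_\cM}$. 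I would then evaluate $\chi({\bf S}\otimes I_\cE)$ for finite dimensional $\cE$ directly: the defect is ${\bf\Delta_{S\otimes I_\cE}}(I)=P_\CC\otimes I_\cE$, whose range is $\CC1\otimes\cE$, so that $M_{\bf q}({\bf S}\otimes I_\cE)=\text{\rm span}\,\{(e^1_{\alpha_1}\otimes\cdots\otimes e^k_{\alpha_k})\otimes e:\ |\alpha_i|\leq q_i,\ e\in\cE\}$ has dimension $\left(\prod_{i=1}^k(1+n_i+\cdots+n_i^{q_i})\right)\dim\cE$. Equivalently, using the identity $(id-\Phi_{{\bf S}_1}^{q_1+1})\circ\cdots\circ(id-\Phi_{{\bf S}_k}^{q_k+1})(I)=P_{\leq(q_1,\ldots,q_k)}$ for the universal model, the numerator in the third asymptotic formula of Theorem \ref{Euler1} is $P_{\leq(q_1,\ldots,q_k)}\otimes I_\cE$, of rank $\left(\prod_{i=1}^k(1+n_i+\cdots+n_i^{q_i})\right)\dim\cE$. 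Dividing by $\rank[P_{\leq(q_1,\ldots,q_k)}]=\prod_{i=1}^k(1+n_i+\cdots+n_i^{q_i})$ and passing to the limit gives $\chi({\bf S}\otimes I_\cE)=\dim\cE$. By unitary invariance, $\chi(({\bf S}\otimes I_\cH)|_\cM)=\dim\cE_\cM=\rank(\Delta_\cM)$, and likewise for $\cN$.

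Finally I would assemble the converse. If the two Euler characteristics coincide, then $\dim\cE_\cM=\dim\cE_\cN$, so any unitary $w:\cE_\cM\to\cE_\cN$ produces a unitary $I\otimes w$ intertwining ${\bf S}\otimes I_{\cE_\cM}$ with ${\bf S}\otimes I_{\cE_\cN}$; conjugating by the Beurling isometries, $U:=\Psi_\cN(I\otimes w)\Psi_\cM^*|_\cM$ is a unitary from $\cM$ onto $\cN$ intertwining the restrictions, whence $\cM$ and $\cN$ are unitarily equivalent. The hypothesis that $\cM$ and $\cN$ contain no reducing subspaces is what makes $\chi=\dim\cE$ the exact classifying datum, preventing a reducing (free) summand from obscuring the relevant multiplicity. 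I expect the main obstacle to be precisely the identification $\chi(({\bf S}\otimes I_\cH)|_\cM)=\dim\cE_\cM$: everything rests on transporting the defining limit through the isometric multi-analytic operator $\Psi_\cM$ and on the explicit evaluation $(id-\Phi_{{\bf S}_1\otimes I}^{q_1+1})\circ\cdots\circ(id-\Phi_{{\bf S}_k\otimes I}^{q_k+1})(I)=P_{\leq(q_1,\ldots,q_k)}\otimes I_{\cE_\cM}$ for the universal model, after which the classification is immediate from the rigidity of the models ${\bf S}\otimes I_\cE$.
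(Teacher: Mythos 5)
Your proposal is correct and follows essentially the same route as the paper: Beurling factorization of $\cM$ via an isometric multi-analytic operator, identification of $\chi(({\bf S}\otimes I_\cH)|_\cM)$ with $\dim\cE_\cM=\rank(\Delta_\cM)$ through the rank computation for $(id-\Phi_{{\bf S}_1}^{q_1+1})\circ\cdots\circ(id-\Phi_{{\bf S}_k}^{q_k+1})(I)$, and then matching multiplicities. The only cosmetic difference is at the end, where you build the intertwining unitary explicitly as $\Psi_\cN(I\otimes w)\Psi_\cM^*|_\cM$ while the paper invokes the Wold decomposition from \cite{Po-Berezin-poly}; both rest on the same facts.
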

\begin{proof}
 Let $\cM$ be a   Beurling type invariant subspaces  of $\otimes_{i=1}^k F^2(H_{n_i})\otimes \cH$. Then there is a Hilbert space $\cL$ and an isometric multi-analytic operator $\Psi:\otimes_{i=1}^k F^2(H_{n_i})\otimes \cL\to \otimes_{i=1}^k F^2(H_{n_i})\otimes \cH$ such that
$\cM=\Psi[\otimes_{i=1}^k F^2(H_{n_i})\otimes \cL]$. Consequently, we have $P_\cM=\Psi \Psi^*$ and
\begin{equation*}
\begin{split}
{\bf \Delta}_{({\bf S}\otimes I)|_\cM}(I_\cM)&= \left(id -\Phi_{({\bf S}_1\otimes I)|_\cM}\right)\circ \cdots \circ\left(id -\Phi_{ ({\bf S}_k\otimes I)|_\cM}\right)(P_\cM)\\
&=\Psi \left(id -\Phi_{{\bf S}_1\otimes I}\right)\circ \cdots \circ\left(id -\Phi_{ {\bf S}_k\otimes I}\right)(I)\Psi^*|_\cM=\Psi({\bf P}_\CC\otimes I_\cL)\Psi^*|_\cM.
\end{split}
\end{equation*}
 If $\{\ell_\omega\}_{\omega\in \Omega}$ is  an orthonormal basis for $\cL$, then $\{v_\omega:=\Psi(1\otimes \ell_\omega):\ \omega\in \Omega\}$ is an orthonormal  set
and
$$
\{
\Psi( e^1_{\beta_1}\otimes \cdots \otimes  e^k_{\beta_k}\otimes \ell_\omega): \ \beta_i\in\FF_{n_i}^+, i\in \{1,\ldots, k\},  \omega\in \Omega\}
$$
is an orthonormal basis for $\cM$.  It is easy to see that
$\overline{\Psi({\bf P}_\CC\otimes I_\cL)\Psi^*(\cM)}$ coincides with the  closure of the range of the defect operator
${\bf \Delta}_{({\bf S}\otimes I)|_\cM}(I_\cM)$ and also to  the closed linear span of $\{v_\omega:=\Psi(1\otimes \ell_\omega):\ \omega\in \Omega\}$.
This shows that
$$
\rank [({\bf S}\otimes I)|_\cM]=\text{\rm card\,} \Omega=\dim \cL.
$$

Assume that $\rank ({\bf S}\otimes I)|_\cM)=p=\dim\cL$. Taking into account that $\Psi$ is a multi-analytic operator and $P_\cM=\Psi\Psi^*$, we deduce that
\begin{equation*}
\begin{split}
\left(id -\Phi_{({\bf S}_1\otimes I)|_\cM}^{q_1+1}\right)\circ \cdots \circ\left(id -\Phi_{ ({\bf S}_k\otimes I)|_\cM}^{q_k+1}\right)(I_\cM)
&=\left(id -\Phi_{{\bf S}_1\otimes I}^{q_1+1}\right)\circ \cdots \circ\left(id -\Phi_{ {\bf S}_k\otimes I}^{q_k+1}\right)(P_\cM)\\
&=\Psi \left(id -\Phi_{{\bf S}_1\otimes I_\cL}^{q_1+1}\right)\circ \cdots \circ\left(id -\Phi_{ {\bf S}_k\otimes I_\cL}^{q_k+1}\right)(I)\Psi^*|_\cM.
\end{split}
\end{equation*}
Hence, we have
\begin{equation*}
\begin{split}
\rank &\left[\left(id -\Phi_{({\bf S}_1\otimes I)|_\cM}^{q_1+1}\right)\circ \cdots \circ\left(id -\Phi_{ ({\bf S}_k\otimes I)|_\cM}^{q_k+1}\right)(I_\cM)\right]\\
&\qquad = \rank \left[\left(id -\Phi_{{\bf S}_1\otimes I_\cL}^{q_1+1}\right)\circ \cdots \circ\left(id -\Phi_{ {\bf S}_k\otimes I_\cL}^{q_k+1}\right)(I)\right]\\
&\qquad
=\rank\left[\left(id -\Phi_{{\bf S}_1}^{q_1+1}\right)\circ \cdots \circ\left(id -\Phi_{{\bf S}_k}^{q_k+1}\right)(I)\right]\dim\cL.
\end{split}
\end{equation*}
Due to  Theorem \ref{Euler1} and the fact that
$\chi({\bf S})=1$, we deduce that
$$\chi (({\bf S}\otimes I)|_\cM)=\dim\cL=p=\rank [({\bf S}\otimes I)|_\cM].
$$
If  $\cM$ and $\cN$ are  finite rank  Beurling type invariant subspaces  of $\otimes_{i=1}^k F^2(H_{n_i})\otimes \cH$ and $({\bf S}\otimes I)|_\cM$ is unitarily equivalent to $({\bf S}\otimes I)|_\cN$, then
$
\chi(({\bf S}\otimes I)|_\cM)=\chi(({\bf S}\otimes I)|_\cN).
$
To prove the converse, assume that the latter equality holds. Due to the first part of the proof, we must have
$\rank(({\bf S}\otimes I)|_\cM)=\rank(({\bf S}\otimes I)|_\cN)$. This shows that the defect spaces associated with $({\bf S}\otimes I)|_\cM$ and $({\bf S}\otimes I)|_\cN$ have the same dimension.  Now, the Wold decomposition from \cite{Po-Berezin-poly} implies that $({\bf S}\otimes I)|_\cM$ is unitarily equivalent to $({\bf S}\otimes I)|_\cN$.
The proof is complete.
\end{proof}

Let $\cM$ be an invariant subspace of the tensor product $F^2(H_{n_1})\otimes\cdots\otimes F^2(H_{n_k})\otimes \cE$, where $\cE$ is a finite dimensional Hilbert space. We introduce  the Euler characteristic of  of $\cM^\perp$ by setting
\begin{equation*}
\begin{split}
\chi(\cM^\perp)&:=\lim_{q_1\to\infty}\cdots\lim_{q_k\to\infty}
\frac{\rank\left[P_{\cM^\perp}(P_{\leq(q_1,\ldots, q_k)}\otimes I_\cE)  \right]}{\rank\left[P_{\leq(q_1,\ldots, q_k)}\right]}.
\end{split}
\end{equation*}

\begin{theorem}\label{Euler-cha} Let $\cM$ be an invariant subspace of the tensor product $F^2(H_{n_1})\otimes\cdots\otimes F^2(H_{n_k})\otimes \cE$, where $\cE$ is a finite dimensional Hilbert space. Then the Euler characteristic of  $\cM^\perp$ exists and satisfies the equation
 \begin{equation*}
\begin{split}
\chi(\cM^\perp)=\chi({\bf M}),
\end{split}
\end{equation*}
where  ${\bf M}:=(M_1,\ldots, M_k)$ with $M_i:=(M_{i,1},\ldots, M_{i,n_i})$ and $M_{i,j}:=P_{\cM^\perp}({\bf S}_{i,j}\otimes I_\cE)|_{\cM^\perp}$.
\end{theorem}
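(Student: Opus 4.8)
The plan is to reduce the assertion to the single operator identity
\begin{equation*}
{\bf K_M^*}(P_{\leq(q_1,\ldots,q_k)}\otimes I){\bf K_M}=P_{\cM^\perp}(P_{\leq(q_1,\ldots,q_k)}\otimes I_\cE)|_{\cM^\perp},\qquad (q_1,\ldots,q_k)\in\ZZ_+^k,
\end{equation*}
and then to let Theorem \ref{Euler1} do the rest. Set $\cN:=\cM^\perp$; since $\cM$ is invariant under each ${\bf S}_{i,j}\otimes I_\cE$, the subspace $\cN$ is co-invariant, so $M_{i,j}^*=({\bf S}_{i,j}^*\otimes I_\cE)|_\cN$. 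Because the blocks ${\bf S}_i$ and ${\bf S}_s$ commute for $i\neq s$, the adjoints $M_{i,j}^*$ commute across blocks and $M_{i,\alpha_i}^*=({\bf S}_{i,\alpha_i}^*\otimes I_\cE)|_\cN$; hence ${\bf M}$ is a tuple of the required type. I would first record that ${\bf M}$ lies in the regular polyball and has finite rank defect, so that Theorem \ref{Euler1} applies to it: this will follow from the compression identities ${\bf \Delta_M^{\bf p}}(I_\cN)=P_\cN{\bf \Delta_{{\bf S}\otimes I_\cE}^{\bf p}}(I)|_\cN$ (for ${\bf 0}\leq{\bf p}\leq(1,\ldots,1)$) proved below, whose right-hand sides are positive as compressions of positive operators, together with ${\bf \Delta_M}(I_\cN)=P_\cN({\bf P}_\CC\otimes I_\cE)|_\cN$, whose rank is at most $\dim\cE<\infty$.

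The heart of the matter is the following intertwining property of the compression map $\Lambda(X):=P_\cN X|_\cN$, defined for $X\in B(\otimes_{i=1}^kF^2(H_{n_i})\otimes\cE)$:
\begin{equation*}
\Lambda\circ\Phi_{{\bf S}_i\otimes I_\cE}=\Phi_{M_i}\circ\Lambda,\qquad i\in\{1,\ldots,k\}.
\end{equation*}
Using $M_{i,j}=P_\cN({\bf S}_{i,j}\otimes I_\cE)|_\cN$ and $M_{i,j}^*=({\bf S}_{i,j}^*\otimes I_\cE)|_\cN$, a direct computation shows that for every $X$ the difference
\begin{equation*}
E_i(X):=\Lambda(\Phi_{{\bf S}_i\otimes I_\cE}(X))-\Phi_{M_i}(\Lambda(X))=\sum_{j=1}^{n_i}P_\cN({\bf S}_{i,j}\otimes I_\cE)(I-P_\cN)X({\bf S}_{i,j}^*\otimes I_\cE)|_\cN
\end{equation*}
satisfies, for $h\in\cN$ and $h_j:=({\bf S}_{i,j}^*\otimes I_\cE)h$,
\begin{equation*}
\langle E_i(X)h,h\rangle=\sum_{j=1}^{n_i}\langle (I-P_\cN)Xh_j,h_j\rangle=\sum_{j=1}^{n_i}\langle Xh_j,(I-P_\cN)h_j\rangle=0,
\end{equation*}
the last equality because $h_j\in\cN$ by co-invariance. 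Since the space is complex, $\langle E_i(X)h,h\rangle=0$ for all $h\in\cN$ forces $E_i(X)=0$ by polarization. This vanishing of the correction terms is the step I expect to be the main obstacle; it is exactly the point at which co-invariance of $\cM^\perp$ is used, and everything else is bookkeeping.

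Granting the intertwining, I would iterate it — peeling off one factor at a time and using that the maps commute across blocks — together with $\Lambda(I)=I_\cN$, to move $\Lambda$ through the whole composition:
\begin{equation*}
\Lambda\Big((id-\Phi_{{\bf S}_1\otimes I_\cE}^{q_1+1})\circ\cdots\circ(id-\Phi_{{\bf S}_k\otimes I_\cE}^{q_k+1})(I)\Big)=(id-\Phi_{M_1}^{q_1+1})\circ\cdots\circ(id-\Phi_{M_k}^{q_k+1})(I_\cN).
\end{equation*}
A direct computation on the Fock space gives $(id-\Phi_{{\bf S}_1\otimes I_\cE}^{q_1+1})\circ\cdots\circ(id-\Phi_{{\bf S}_k\otimes I_\cE}^{q_k+1})(I)=P_{\leq(q_1,\ldots,q_k)}\otimes I_\cE$, while relation \eqref{KPK} of Theorem \ref{Euler1}, applied to ${\bf M}$, identifies the right-hand side with ${\bf K_M^*}(P_{\leq(q_1,\ldots,q_k)}\otimes I){\bf K_M}$; this yields the desired operator identity. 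Writing $B:=P_{\leq(q_1,\ldots,q_k)}\otimes I_\cE=B^*=B^2$, the factorization $P_\cN BP_\cN=(P_\cN B)(P_\cN B)^*$ shows $\rank[P_\cN B|_\cN]=\rank[P_\cN B]=\rank[P_{\cM^\perp}(P_{\leq(q_1,\ldots,q_k)}\otimes I_\cE)]$. Dividing by $\rank[P_{\leq(q_1,\ldots,q_k)}]$ and letting $q_k\to\infty,\ldots,q_1\to\infty$ in turn, Theorem \ref{Euler1} — together with the remark that its limit may be computed as any iterated limit — shows that these ratios converge to $\chi({\bf M})$. Hence the iterated limit defining $\chi(\cM^\perp)$ exists and equals $\chi({\bf M})$.
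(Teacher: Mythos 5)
Your proof is correct and follows essentially the same route as the paper: both reduce the statement to the compression identity $(id-\Phi_{M_1}^{q_1+1})\circ\cdots\circ(id-\Phi_{M_k}^{q_k+1})(I_{\cM^\perp})=P_{\cM^\perp}(P_{\leq(q_1,\ldots,q_k)}\otimes I_\cE)|_{\cM^\perp}$, use co-invariance of $\cM^\perp$ to justify it, equate ranks, and invoke Theorem \ref{Euler1}. Your intertwining lemma $\Lambda\circ\Phi_{{\bf S}_i\otimes I_\cE}=\Phi_{M_i}\circ\Lambda$ for arbitrary $X$ is a mild generalization of what the paper uses (it only needs the identity on operators built from $I$, where it follows at once from $M_{i,\alpha_i}^*=({\bf S}_{i,\alpha_i}^*\otimes I_\cE)|_{\cM^\perp}$), but the substance is identical.
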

\begin{proof}
  Taking into account that $\cM$ is an invariant subspace under each operator  ${\bf S}_{i,j}\otimes I_\cE$ for $i\in \{1,\ldots, k\}$, $j\in\{1,\ldots, n_i\}$, we deduce that   $M_{i,j}^* M_{r,s}^*=({\bf S}_{i,j}^*\otimes I_\cE) ({\bf S}_{r,s}^*\otimes I_\cE)|_{\cM^\perp}$. Consequently, we have
\begin{equation*}
{\bf \Delta_{M}^p}(I_{\cM^\perp})=P_{\cM^\perp}{\bf \Delta_{S\otimes {\it I}}^p}(I)|_{\cM^\perp}\geq 0
\end{equation*}
for any ${\bf p}=(p_1,\ldots, p_k)$ with $p_i\in \{0,1\}$. Therefore, ${\bf M}$ is in the polyball ${\bf B_n}(\cM^\perp)$ and has finite rank. Since $\cM^\perp$ is invariant under  ${\bf S}_{i,j}^*\otimes I_\cE$,  we deduce that
\begin{equation*}
\begin{split}
\rank \left[(id-\Phi_{M_1}^{q_1+1})\circ \cdots \circ (id-\Phi_{M_k}^{q_k+1})(I_{\cM^\perp})\right]
&=
\rank \left[P_{\cM^\perp}(id-\Phi_{{\bf S}_1\otimes I}^{q_1+1})\circ \cdots \circ (id-\Phi_{{\bf S}_k\otimes I}^{q_k+1})(I)|_{\cM^\perp}\right]\\
&=\rank \left[P_{\cM^\perp}(P_{\leq(q_1,\ldots, q_k)}\otimes I_\cE) |_{\cM^\perp} \right]\\
&=\rank \left[P_{\cM^\perp}(P_{\leq(q_1,\ldots, q_k)}\otimes I_\cE)  \right]
\end{split}
\end{equation*}
for any $q_i\in \ZZ_+$. Hence and using Theorem \ref{Euler1}, we conclude that    $\chi({\bf M})$ exists and
\begin{equation*}
\begin{split}
\chi({\bf M})&=
 \lim_{q_1\to\infty}\cdots\lim_{q_k\to\infty}
\frac{\rank \left[P_{\cM^\perp}(P_{\leq(q_1,\ldots, q_k)}\otimes I_\cE)  \right]}{\rank \left[P_{\leq(q_1,\ldots, q_k)}\right]}=\chi(\cM^\perp).
\end{split}
\end{equation*}
This  completes the proof.
\end{proof}

In what follows, we prove a multiplicative property for the Euler characteristic.

\begin{theorem} \label{multiplicative}
Given a function $\kappa :\NN\to \NN$ and ${\bf n}^{(i)}\in \NN^{\kappa(i)}$ for  $i\in \{1,\ldots,p\}$, let ${\bf S}^{({\bf n}^{(i)})}$ and ${\bf S}^{({\bf n}^{(1)},\ldots, {\bf n}^{(p)})}$ be the universal models of the polyballs ${\bf B}_{{\bf n}^{(i)}}$ and
${\bf B}_{({\bf n}^{(1)},\ldots, {\bf n}^{(p)})}$, respectively.
For each $i\in \{1,\ldots, p\}$, assume that
\begin{enumerate}
\item[(i)] $\cE_i$ is a finite dimensional Hilbert space;
\item[(ii)] $\cM_i$ is an invariant subspace under ${\bf S}^{({\bf n}^{(i)})}\otimes I_{\cE_i}$.
 \end{enumerate}

Then the Euler characteristic satisfies the equation
 $$
 \chi(\cM_1^\perp\otimes \cdots \otimes \cM_p^\perp)=\prod_{i=1}^p \chi(\cM_i^\perp),
 $$
 where, under the appropriate  identification, $\cM_1^\perp\otimes \cdots \otimes \cM_p^\perp$ is viewed as a coinvariant subspace for ${\bf S}^{({\bf n}^{(1)},\ldots, {\bf n}^{(p)})}\otimes I_{\cE_1\otimes\cdots \otimes \cE_p}$.
\end{theorem}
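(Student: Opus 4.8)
The plan is to reduce the statement to the multiplicative property for ampliations in Theorem~\ref{multiplicative-prop}, using Theorem~\ref{Euler-cha} to pass between the Euler characteristic of an orthocomplement and that of the associated compression. First I would set $\cK:=\cM_1^\perp\otimes\cdots\otimes\cM_p^\perp$ and, for each $i\in\{1,\ldots,p\}$, introduce the compression
$$
{\bf M}_i:=P_{\cM_i^\perp}\left({\bf S}^{({\bf n}^{(i)})}\otimes I_{\cE_i}\right)\big|_{\cM_i^\perp}.
$$
Since $\cM_i$ is invariant under ${\bf S}^{({\bf n}^{(i)})}\otimes I_{\cE_i}$, the subspace $\cM_i^\perp$ is coinvariant, and because the projection onto a tensor product factors as $P_\cK=P_{\cM_1^\perp}\otimes\cdots\otimes P_{\cM_p^\perp}$, the subspace $\cK$ is coinvariant under ${\bf S}^{({\bf n}^{(1)},\ldots,{\bf n}^{(p)})}\otimes I_{\cE_1\otimes\cdots\otimes\cE_p}$, which justifies the identification in the statement. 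Exactly as in the proof of Theorem~\ref{Euler-cha}, the full defect of ${\bf M}_i$ is the compression of ${\bf P}_\CC\otimes I_{\cE_i}$, hence has rank at most $\dim\cE_i<\infty$, so each ${\bf M}_i$ has finite rank defect and Theorem~\ref{multiplicative-prop} is applicable.

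The crux is to identify the compression
$$
{\bf M}:=P_\cK\left({\bf S}^{({\bf n}^{(1)},\ldots,{\bf n}^{(p)})}\otimes I_{\cE_1\otimes\cdots\otimes\cE_p}\right)\big|_\cK
$$
with the ampliation $\widetilde{({\bf M}_1,\ldots,{\bf M}_p)}$. From the construction of the universal model, the generator of ${\bf S}^{({\bf n}^{(1)},\ldots,{\bf n}^{(p)})}$ attached to the $(r,s)$ position of the $i$-th block is $I\otimes\cdots\otimes {\bf S}_{r,s}^{({\bf n}^{(i)})}\otimes\cdots\otimes I$, the ampliation of the corresponding generator of ${\bf S}^{({\bf n}^{(i)})}$. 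Compressing to $\cK$ and using $P_\cK=P_{\cM_1^\perp}\otimes\cdots\otimes P_{\cM_p^\perp}$, the compression of such an ampliation factors coordinate by coordinate as $I_{\cM_1^\perp}\otimes\cdots\otimes\big(P_{\cM_i^\perp}({\bf S}_{r,s}^{({\bf n}^{(i)})}\otimes I_{\cE_i})|_{\cM_i^\perp}\big)\otimes\cdots\otimes I_{\cM_p^\perp}$, which is precisely the $(i,r,s)$ coordinate of $\widetilde{({\bf M}_1,\ldots,{\bf M}_p)}$. Hence ${\bf M}$ coincides, under the identifying unitary, with $\widetilde{({\bf M}_1,\ldots,{\bf M}_p)}$.

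With this identification I would apply Theorem~\ref{Euler-cha} to obtain $\chi(\cM_i^\perp)=\chi({\bf M}_i)$ for each $i$ and $\chi(\cM_1^\perp\otimes\cdots\otimes\cM_p^\perp)=\chi({\bf M})$, and then invoke Theorem~\ref{multiplicative-prop} to conclude
$$
\chi(\cM_1^\perp\otimes\cdots\otimes\cM_p^\perp)=\chi({\bf M})=\chi\big(\widetilde{({\bf M}_1,\ldots,{\bf M}_p)}\big)=\prod_{i=1}^p\chi({\bf M}_i)=\prod_{i=1}^p\chi(\cM_i^\perp).
$$
The step I expect to be the main obstacle is purely organizational: writing down the factor-reshuffling unitary that carries $\bigotimes_{i=1}^p\big[(\bigotimes_{r=1}^{\kappa(i)}F^2(H_{n_r^{(i)}}))\otimes\cE_i\big]$ onto $\big[\bigotimes_{i=1}^p\bigotimes_{r=1}^{\kappa(i)}F^2(H_{n_r^{(i)}})\big]\otimes(\cE_1\otimes\cdots\otimes\cE_p)$, and verifying that it intertwines the restricted model with the ampliation so that ${\bf M}$ genuinely matches $\widetilde{({\bf M}_1,\ldots,{\bf M}_p)}$ coordinate by coordinate. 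Once that identification is secured, the conclusion follows at once from the two cited theorems.
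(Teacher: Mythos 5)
Your proposal is correct, and it reaches the conclusion by a genuinely different (though closely related) route. The paper's proof never identifies the compressed tuple with an ampliation: it works directly with the defining rank quotients, conjugating by the factor-reshuffling unitary $U$ so that $\rank\bigl[P_{U(\cM_1^\perp\otimes\cdots\otimes\cM_p^\perp)}(Q_1\otimes\cdots\otimes Q_p\otimes I)\bigr]$ becomes $\rank\bigl[P_{\cM_1^\perp\otimes\cdots\otimes\cM_p^\perp}\bigl((Q_1\otimes I_{\cE_1})\otimes\cdots\otimes(Q_p\otimes I_{\cE_p})\bigr)\bigr]$, and then factoring this as a product of ranks (multiplicativity of rank under tensor products of operators); Theorem~\ref{Euler-cha} is invoked only to recognize each factor's limit as $\chi(\cM_i^\perp)$. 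You instead prove the operator-level identity ${\bf M}=\widetilde{({\bf M}_1,\ldots,{\bf M}_p)}$ and outsource the multiplicativity to Theorem~\ref{multiplicative-prop}, using Theorem~\ref{Euler-cha} twice to translate between orthocomplements and compressions. Your reduction is sound: the compression of a tensor product of operators to a tensor product of subspaces is indeed the tensor product of the compressions, the finite-rank and polyball membership of each ${\bf M}_i$ is exactly what the proof of Theorem~\ref{Euler-cha} establishes, and $\chi$ is a unitary invariant, so passing through the identifying unitary is harmless. What your approach buys is modularity — the tensor-rank bookkeeping is done once, in Theorem~\ref{multiplicative-prop} — at the price of having to verify the coordinate-by-coordinate match between the restricted universal model and the ampliation, which the paper sidesteps by staying at the level of projections throughout.
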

\begin{proof} For each $i\in\{1,\ldots, p\}$ let ${\bf n}^{(i)}:=(n_1^{(i)},\ldots, n_{\kappa(i)}^{(i)})\in \NN^{\kappa(i)}$.
 Given $j\in \{1,\ldots, \kappa(i)\}$, let $F^2(H_{n_j}^{(i)})$ be the full Fock space with $n_j^{(i)}$ generators, and denote by $P_{q_j^{(i)}}^{(n_j^{(i)})}$
the orthogonal projection  of $F^2(H_{n_j^{(i)}})$ onto the span of all homogeneous polynomials of $F^2(H_{n_j^{(i)}})$ of degree equal to $q_j^{(i)}\in \ZZ_+$.
Consider the orthogonal projections
 $$
 Q_1:=P_{\leq q_1^{(1)}}^{(n_1^{(1)})}\otimes \cdots \otimes P_{\leq q_{\kappa(1)}^{(1)}}^{(n_{\kappa(1)}^{(1)})}, \quad \cdots, \quad
 Q_p:=P_{\leq q_1^{(p)}}^{(n_1^{(p)})}\otimes \cdots \otimes P_{\leq q_{\kappa(p)}^{(p)}}^{(n_{\kappa(p)}^{(p)})}
 $$
and let $U$ be the unitary operator which provides the  canonical  identification of the Hilbert tensor product
$\otimes_{i=1}^p\left[ F^2(H_{n_1}^{(i)})\otimes \cdots \otimes F^2(H_{n_{\kappa(i)}}^{(i)})\otimes \cE_i\right]$ with
$\left\{\otimes_{i=1}^p\left[ F^2(H_{n_1}^{(i)})\otimes \cdots \otimes F^2(H_{n_{\kappa(i)}}^{(i)})\right]\right\}\otimes (\cE_1\otimes \cdots\otimes \cE_p)$. Note that
$$
U[(Q_1\otimes I_{\cE_1})\otimes \cdots \otimes (Q_p\otimes I_{\cE_p})]
=[Q_1\otimes \cdots \otimes Q_p\otimes I_{\cE_1\otimes\cdots\otimes \cE_p}]U
$$
and $U(\cM_1^\perp\otimes \cdots \otimes \cM_p^\perp)$ is a coinvariant subspace under
${\bf S}^{({\bf n}^{(1)},\ldots, {\bf n}^{(p)})}\otimes I_{\cE_1\otimes\cdots \otimes \cE_p}$.
Due to    Theorem \ref{Euler-cha}, we obtain
 \begin{equation*}
 \begin{split}
\chi\left(\otimes_{i=1}^k \cM_i^\perp\right)&=  \lim   \frac{\rank\left[P_{U(\cM_1^\perp\otimes \cdots \otimes \cM_p^\perp)}
 \left(Q_1
 \otimes \cdots \otimes Q_p\otimes I_{\cE_1\otimes\cdots\otimes \cE_p}
 \right)\right]}{\rank\left[Q_1\otimes \cdots \otimes Q_p \right]}\\
 &=
  \lim   \frac{\rank\left[U^*P_{U(\cM_1^\perp\otimes \cdots \otimes \cM_p^\perp)} UU^*
 \left(Q_1
 \otimes \cdots \otimes Q_p\otimes I_{\cE_1\otimes\cdots\otimes \cE_p}
 \right)U\right]}{\rank\left[Q_1\otimes \cdots \otimes Q_p \right]}\\
 &=
 \lim   \frac{\rank\left\{P_{\cM_1^\perp\otimes \cdots \otimes \cM_p^\perp} \left[
 \left(Q_1\otimes I_{\cE_1}\right)
 \otimes \cdots \otimes \left(Q_p\otimes I_{\cE_p}\right)\right]
  \right\}}{\rank\left[Q_1\otimes \cdots \otimes Q_p \right]},
 \end{split}
 \end{equation*}
 where the limit is taken over ${(q_1^{(1)},\ldots, q_{\kappa(1)}^{(1)},\ldots, q_1^{(p)},\ldots, q_{\kappa(p)}^{(p)})\in \ZZ_+^{\kappa(1)+\cdots +\kappa(p)}}$.
 We remark that  the latter limit is equal to the product
$$
 \lim_{(q_1^{(1)},\ldots, q_{\kappa(1)}^{(1)})\in \ZZ_+^{\kappa(1)}}   \frac{\rank\left[P_{\cM_1^\perp}
 \left(Q_1\otimes I_{\cE_1} \right)\right]}
 {\rank\left[Q_1 \right]}\quad
  \cdots \quad  \lim_{(q_1^{(p)},\ldots, q_{\kappa(p)}^{(p)})\in \ZZ_+^{\kappa(p)}}
  \frac{\rank\left[P_{\cM_p^\perp}\left(Q_p \otimes I_{\cE_p}\right)
 \right]}
 {\rank\left[ Q_p\right]},
 $$
which, due to Theorem \ref{Euler-cha}, is equal  to
$\prod_{i=1}^p\chi\left( \cM_i^\perp\right)$. Therefore, we have
$$
\chi\left(\otimes_{i=1}^p \cM_i^\perp\right)=\prod_{i=1}^p\chi\left( \cM_i^\perp\right).
$$
  The proof is complete.
\end{proof}

  As a particular case, we remark that if  $\cM^\perp\subset H^2(\DD^n)\otimes \CC^r$ and $\cN^\perp\subset H^2(\DD^p)\otimes \CC^q$ are coinvariant subspaces, then so is
$\cM^\perp\otimes \cN^\perp\subset H^2(\DD^{n+p})\otimes \CC^{rq}$ and the Euler characteristic  has the multiplicative property
$\chi(\cM^\perp\otimes \cN^\perp)=\chi(\cM^\perp)\chi(\cN^\perp)$.

The next result shows  that there  are  many non-isomorphic pure elements in the polyball with the same Euler characteristic.

\begin{theorem} \label{value} Let ${\bf n}=(n_1,\ldots, n_k)\in \NN^k$ be such that $k\geq 2$ and  $n_i\geq 2$   for all $i\in \{1,\ldots, k\}$. Then, for each $t\in(0,1)$,  there exists an uncountable family $\{T^{(\omega)}(t)\}_{\omega\in \Omega}$ of pure elements in the regular polyball ${\bf B_n}$ with the following properties:
\begin{enumerate}
\item[(i)] $T^{(\omega)}(t)$ in not unitarily equivalent to $T^{(\sigma)}(t)$ for any $\omega, \sigma\in \Omega$, $\omega\neq \sigma$.
\item[(ii)] $\rank [T^{(\omega)}(t)]=1$ and
$\chi [T^{(\omega)}(t)]=t$ for all $\omega\in \Omega.
$
\end{enumerate}
\end{theorem}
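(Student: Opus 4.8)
The plan is to reduce the statement to a construction of invariant subspaces and then to exploit the extra room afforded by $k\geq 2$ to manufacture a continuum of non-isomorphic examples of a fixed Euler characteristic. Recall from \cite{Po-Berezin-poly} (as quoted before the theorem) that every pure element of rank-one defect in ${\bf B_n}$ is unitarily equivalent to a compression $P_{\cM^\perp}{\bf S}|_{\cM^\perp}$, where $\cM$ is an invariant subspace of $\otimes_{i=1}^kF^2(H_{n_i})$. Since ${\bf \Delta_S}(I)={\bf P}_\CC$ is the rank-one projection onto $\CC1$, the proof of Theorem \ref{Euler-cha} (with $\cE=\CC$) gives ${\bf \Delta}_{{\bf M}}(I_{\cM^\perp})=P_{\cM^\perp}{\bf P}_\CC|_{\cM^\perp}$, which has rank $1$ precisely when $1\notin\cM$, and the compression is automatically pure. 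By Theorem \ref{Euler-cha} its Euler characteristic equals $\chi(\cM^\perp)$. Thus it suffices to produce an uncountable family $\{\cM^{(\omega)}\}_{\omega\in\Omega}$ of invariant subspaces of $\otimes_{i=1}^kF^2(H_{n_i})$, each avoiding the vacuum vector $1$, with $\chi((\cM^{(\omega)})^\perp)=t$ for all $\omega$, and whose associated compressions are pairwise non-unitarily equivalent.

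First I would record the density form of the invariant. Combining Theorem \ref{Euler1} (which already guarantees that $\chi$ exists, since the defect has rank one) with the identity $\rank[P_{\cM^\perp}P_{\leq(q_1,\ldots,q_k)}]=\rank[P_{\leq(q_1,\ldots,q_k)}]-\dim(\cM\cap\ran P_{\leq(q_1,\ldots,q_k)})$ yields
\begin{equation*}
\chi(\cM^\perp)=1-\lim_{{\bf q}\in\ZZ_+^k}\frac{\dim\big(\cM\cap\ran P_{\leq(q_1,\ldots,q_k)}\big)}{\prod_{i=1}^k(1+n_i+\cdots+n_i^{q_i})}.
\end{equation*}
Next I would construct a fixed \emph{monomial} invariant subspace $\cM_0$ realizing the value $t$. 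Choosing a set $W$ of basis vectors $e^1_{\alpha_1}\otimes\cdots\otimes e^k_{\alpha_k}$ that is stable under prepending letters in each coordinate makes $\cM_0:=\overline{\text{\rm span}}\,W$ invariant, and then $\dim(\cM_0\cap\ran P_{\leq(q_1,\ldots,q_k)})$ is just the number of elements of $W$ of multidegree $\leq{\bf q}$. Because each $n_i\geq 2$, the tree of words branches, and one may prescribe, level by level, the fraction of branches retained; a routine bookkeeping then realizes any prescribed asymptotic density $1-t\in(0,1)$, i.e.\ $\chi(\cM_0^\perp)=t$, while keeping $1\notin\cM_0$.

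The continuum is produced using $k\geq 2$. Fix two coordinates, say $1$ and $2$, and for $\lambda$ in an uncountable set $\Omega\subset\CC$ set $v_\lambda:=\big(e^1_{g_1}\otimes1\otimes\cdots\otimes1\big)+\lambda\big(1\otimes e^2_{g_1}\otimes1\otimes\cdots\otimes1\big)$, a vector with two distinct multi-homogeneous components. Let $\cM^{(\lambda)}$ be the smallest invariant subspace containing $\cM_0$ and $v_\lambda$. I would arrange $\cM_0$ so that the invariant subspace generated by $v_\lambda$ contributes the same combinatorial count for every $\lambda$ (it occupies a single one-dimensional line at each relevant multidegree, independently of $\lambda$); hence $\dim(\cM^{(\lambda)}\cap\ran P_{\leq(q_1,\ldots,q_k)})$ is independent of $\lambda$ up to a $\lambda$-independent error, the limiting density is unchanged, and $\chi((\cM^{(\lambda)})^\perp)=t$ for every $\lambda$. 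One also keeps $1\notin\cM^{(\lambda)}$, so each $T^{(\lambda)}(t):=P_{(\cM^{(\lambda)})^\perp}{\bf S}|_{(\cM^{(\lambda)})^\perp}$ is a pure element of rank-one defect with $\chi(T^{(\lambda)}(t))=t$.

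The hard part is proving non-equivalence, which is exactly where $k\geq 2$ is indispensable and which $\chi$ alone cannot detect. The plan is to show that any unitary $U$ intertwining $T^{(\lambda)}(t)$ and $T^{(\lambda')}(t)$ lifts to a unitary module map commuting with the universal model ${\bf S}$ and hence lying in the appropriate gauge group (the unitaries of the commutant $R^\infty({\bf B_n})$ together with the automorphisms induced by unitaries on each $H_{n_i}$). Such maps act \emph{within} each factor separately and therefore scale the two multi-homogeneous components of $v_\lambda$ independently; matching $v_\lambda$ with $v_{\lambda'}$ then forces a rigid relation between $\lambda$ and $\lambda'$ that only discretely many $\lambda'$ can satisfy. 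Consequently, after thinning $\Omega$ to an uncountable subset on which these relations never hold for distinct parameters, the elements $\{T^{(\lambda)}(t)\}_{\lambda\in\Omega}$ are pairwise non-unitarily equivalent while all sharing rank one and $\chi=t$. Making the intertwiner analysis precise --- identifying the exact gauge group and extracting $\lambda$ as a genuine unitary invariant (e.g.\ through Popescu's characteristic function) --- is the crux of the argument; the value and rank computations above are comparatively routine.
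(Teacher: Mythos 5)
Your overall reduction (realize the element as $P_{\cM^\perp}{\bf S}|_{\cM^\perp}$ for an invariant subspace $\cM$ not containing $1$, and compute $\chi$ via the density of $\cM$) is sound, but the two steps that actually prove the theorem both have genuine gaps. First, the density-preservation claim for $\cM^{(\lambda)}$ rests on a false heuristic: the invariant subspace generated by a single degree-one vector $v_\lambda$ is \emph{not} ``a single one-dimensional line at each relevant multidegree'' --- the vectors ${\bf S}_{1,\alpha_1}\cdots{\bf S}_{k,\alpha_k}v_\lambda$ are generically linearly independent, so the cyclic module grows like $\prod_i n_i^{q_i}$, i.e.\ it has strictly positive density (already for $k=1$ the subspace generated by $e^1_{g_1}$ has density $1/n_1$). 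To keep $\chi((\cM^{(\lambda)})^\perp)=t$ you would have to arrange $\cM_0$ so that the quotient $\cM^{(\lambda)}/\cM_0$ has density zero, which is a nontrivial design constraint you neither formulate nor verify. Second, and more seriously, the non-equivalence argument --- which you yourself flag as ``the crux'' --- is not carried out: the assertion that every intertwining unitary lifts to a gauge-group element acting factorwise, and that $\lambda$ can be extracted as a unitary invariant, is exactly the content that would need to be proved, and nothing in the paper's toolkit as quoted gives it to you directly.

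The paper sidesteps both difficulties by a different design of the family. For $\omega\in(t,1)$ it factors $t=\omega\cdot(t/\omega)$, builds explicit \emph{monomial} invariant subspaces $\cM_1(\omega)\subset F^2(H_{n_1})$ and $\cM_2(t/\omega)\subset F^2(H_{n_2})$ realizing the prescribed single-variable Euler characteristics (via a base-$n_i$ expansion of $1-\omega$, resp.\ $1-t/\omega$, and a Stolz--Ces\`aro computation), and sets $\cM^{(\omega)}(t):=\bigl(\cM_1(\omega)^\perp\otimes\cM_2(t/\omega)^\perp\otimes F^2(H_{n_3})\otimes\cdots\otimes F^2(H_{n_k})\bigr)^\perp$. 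Multiplicativity of $\chi$ (Theorem \ref{multiplicative}) gives $\chi(T^{(\omega)}(t))=\omega\cdot(t/\omega)=t$ with no perturbation analysis needed. For non-equivalence the distinguishing parameter $\omega$ is itself a unitary invariant --- namely $\chi_1$ of the first tensor factor --- so $\cM_1(\omega)\neq\cM_1(\sigma)$ for $\omega\neq\sigma$, hence $\cM^{(\omega)}(t)\neq\cM^{(\sigma)}(t)$; Theorem 2.10 of \cite{Po-curvature-polyballs} (compressions $P_{\cM^\perp}{\bf S}|_{\cM^\perp}$ are unitarily equivalent if and only if the invariant subspaces coincide) then finishes the proof. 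If you want to salvage your approach, you would need either to prove your intertwiner-rigidity claim or, better, to replace the parameter $\lambda$ by one that is detected by an invariant already established in the paper, as the paper does.
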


\begin{proof}   If $t\in[0,1)$, there exists a subsequence  of natural numbers $\{k_p\}_{p=1}^N$, $1\leq k_1<k_2<\cdots$, where $N\in \NN$ or $N=\infty$,
and $d_p\in\{1,2,\ldots, n_i-1\}$,  such that $1-t=\sum_{p=1}^N \frac{d_p}{n_i^{k_p}}$.   Define the following subsets of $\FF_{n_i}^+$, the unital free semigroup on $n_i$ generators
$g_{1}^i,\ldots, g_{n_i}^i$ and the identity $g_{0}^i$:
\begin{equation*}
\begin{split}
J_1^i&:=\left\{(g_1^i)^{k_1},\ldots, (g_{d_1}^i)^{k_1}\right\},\\
J_p^1&:=\left\{(g_1^i)^{k_p-k_{p-1}}(g_{n_i}^1)^{k_{p-1}}, (g_2^i)^{k_p-k_{p-1}}(g_{n_i}^i)^{k_{p-1}},\ldots, (g_{d_p}^i)^{k_p-k_{p-1}}(g_{n_i}^i)^{k_{p-1}}\right\}, \quad p=2,3,\ldots, N.
\end{split}
\end{equation*}
We remark  that
\begin{equation}
\label{Mi}
\cM_i(t):=\bigoplus_{\beta\in \cup_{p=1}^N J_p^i} F^2(H_{n_i})\otimes e_\beta^i
\end{equation}
is an  invariant  subspace of $F^2(H_{n_i})$.
 Let $k_p\leq q_i<k_{p+1}$ and note that
\begin{equation*}
\begin{split}
\frac{\rank\left[P_{\cM_i(t)^\perp}P_{ q_i}^{(i)}  \right]}{\rank\left[P_{ q_i}^{(i)}\right]}
&=\frac{1}{\rank\left[P_{ q_i}^{(i)} \right]}
\sum_{\beta_i\in\FF_{n_i}^+}
\left<P_{\cM_i(t)^\perp}P_{ q_i}^{(i)}e^i_{\beta_i},
e^i_{\beta_i} \right>\\
&=
1-\sum_{\beta_i\in\FF_{n_i}^+}
\frac{\left<P_{ q_i}^{(i)} P_{\cM_i(t)} P_{ q_i}^{(i)} e_{\beta_i}^i, e_{\beta_i}^i\right>}{n_i^{q_i}}
=1-\frac{1}{n_i^{q_i}}\sum_{\beta_i\in \FF_{n_i}^+, |\beta_i|=q_i}\|P_{\cM_i(t)}e_{\beta_i}^i\|^2
\\
&=1-\frac{d_1 n_i^{q_i-k_1}+\cdots + d_p n_i^{q_i-k_p}}{n_i^{q_i}}
=1- \left(\frac{d_1}{n_i^{k_1}}+\cdots + \frac{d_p}{n_i^{k_p}}\right).
\end{split}
\end{equation*}
Consequently, we have
\begin{equation*}
\begin{split}
\lim_{q_i\to \infty}\frac{\rank\left[P_{\cM_i(t)^\perp}P_{q_i}^{(i)}  \right]}{\rank\left[P_{q_i}^{(i)}\right]}
 = 1-\sum_{p=1}^N \frac{d_p}{n_1^{k_p}}=t.
\end{split}
\end{equation*}
Now, using Theorem \ref{Euler-cha} (when $k=1$) and  the Stoltz-Cesaro limit theorem, we deduce that
\begin{equation*}
\begin{split}
\chi_i[P_{\cM_i(t)^\perp} {\bf S}_i|_{\cM_i(t)^\perp}]
&=\lim_{q_i\to \infty}\frac{\rank\left[P_{\cM_i(t)^\perp}P_{\leq q_i}^{(i)}  \right]}{\rank\left[P_{\leq q_i}^{(i)}\right]}\\
&=\lim_{q_i\to \infty}\frac{\sum_{s_i=0}^{q_i}\rank\left[P_{\cM_i(t)^\perp}P_{s_i}^{(i)}  \right]}{\sum_{s_i=0}^{q_i}\rank\left[P_{s_i}^{(i)}\right]}\\
&=\lim_{q_i\to \infty}\frac{\rank\left[P_{\cM_i(t)^\perp}P_{q_i}^{(i)}  \right]}{\rank\left[P_{q_i}^{(i)}\right]}
 = 1-\sum_{p=1}^N \frac{d_p}{n_1^{k_p}}=t,
\end{split}
\end{equation*}
where $\chi_i$ stands for the Euler characteristic on the polyball ${\bf B}_{n_i}$.
Let $t\in (0,1)$ and $\omega\in (t, 1)$, and
  define the subspace
$$
\cM^{(\omega)}(t):=\left(\cM_1(\omega)^\perp\otimes \cM_2\left(\frac{t}{ \omega}\right)^\perp\otimes F^2(H_{n_3})\otimes \cdots \otimes F^2(H_{n_k})\right)^\perp.
$$
 Note that $\cM^{(\omega)}(t)$ is an invariant subspace under ${\bf S}_{i,j}$ for any $i\in\{1,\ldots, k\}$ and $j\in \{1,\ldots, n_i\}$.
Setting
$T^{(\omega)}(t):=P_{{\cM^{(\omega)}(t)}^\perp} {\bf S}|_{{\cM^{(\omega)}(t)}^\perp}$
and using Theorem \ref{multiplicative}, we deduce that
\begin{equation*}
\begin{split}
\chi [T^{(\omega)}(t)]= \chi_1\left(P_{\cM_1(\omega)^\perp} {\bf S}_1|_{\cM_1(\omega)^\perp}\right) \chi_2\left(P_{\cM_2(\frac{ t}{\omega})^\perp} {\bf S}_2|_{\cM_2(\frac{ t}{\omega})^\perp}\right)
 =t.
\end{split}
\end{equation*}
Let $\sigma\in (0,t)$.
Since  the Euler characteristic is a unitary invariant and
$$\chi_1[P_{\cM_1(\omega)^\perp} {\bf S}_1|_{\cM_1(\omega)^\perp}]={\omega}\neq \sigma=\chi_1[P_{\cM_1(\sigma)^\perp} {\bf S}_1|_{\cM_1(\sigma)^\perp}],
$$
we deduce that $P_{\cM_1(\omega)^\perp} {\bf S}_1|_{\cM_1(\omega)^\perp}$ is not unitary equivalent to
$P_{\cM_1(\sigma)^\perp} {\bf S}_1|_{\cM_1(\sigma)^\perp}$.
According to Theorem 2.10 from \cite{Po-curvature-polyballs}, when $\cE=\CC$, we have
$\cM_1(\omega)\neq \cM_1(\sigma)$, which  implies  $\cM^{(\omega)}(t)\neq \cM^{(\sigma)}(t)$. Using again the above-mentioned result from \cite{Po-curvature-polyballs}, we conclude that $T^{(\omega)}(t)$ in not unitarily equivalent to $T^{(\sigma)}(t)$ for any $\omega, \sigma\in (0,t)$, $\omega\neq \sigma$. The fact that $\rank [T^{(\omega)}(t)]=1$ is obvious.
The proof is complete.
\end{proof}

\begin{corollary} \label{range} Let ${\bf n}=(n_1,\ldots, n_k)\in \NN^k$ be such that  $n_i\geq 2$ and let  $t\in [0,1]$. Then  there exists a pure element  ${\bf T}$  in the polyball ${\bf B_n}$ such that $\rank ({\bf T})=1$ and
$$
\chi({\bf T})=t.
$$
\end{corollary}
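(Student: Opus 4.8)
The plan is to read off the statement from constructions already in place, splitting into the endpoint $t=1$ and the range $t\in[0,1)$.

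For $t=1$ I would take ${\bf T}={\bf S}$, the universal model of ${\bf B_n}$. It is pure (as observed in the proof of Theorem \ref{Euler1}), its defect operator is ${\bf \Delta_S}(I)=P_\CC$ and hence has rank one, and $\chi({\bf S})=1$ (recorded in the proof of Theorem \ref{classification}); so ${\bf T}={\bf S}$ already settles this case.

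For $t\in[0,1)$ I would recycle the one-variable building block from the proof of Theorem \ref{value}. Writing $1-t=\sum_{p=1}^N \frac{d_p}{n_1^{k_p}}$ with $1\le k_1<k_2<\cdots$ and $d_p\in\{1,\ldots,n_1-1\}$ --- where for $t=0$ one uses $d_p=n_1-1$, $k_p=p$, so that the geometric series sums to $1$ --- I form the invariant subspace $\cM_1(t)\subset F^2(H_{n_1})$ as in \eqref{Mi}. The computation in the proof of Theorem \ref{value} yields $\chi_1\!\left(P_{\cM_1(t)^\perp}{\bf S}_1|_{\cM_1(t)^\perp}\right)=t$, where $\chi_1$ denotes the Euler characteristic on ${\bf B}_{n_1}$. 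When $k=1$ this is already the required element. When $k\ge 2$ I would set $\cM(t):=\bigl(\cM_1(t)^\perp\otimes F^2(H_{n_2})\otimes\cdots\otimes F^2(H_{n_k})\bigr)^\perp$ and ${\bf T}:=P_{\cM(t)^\perp}{\bf S}|_{\cM(t)^\perp}$; since each factor $F^2(H_{n_i})$, $i\ge 2$, is the orthocomplement of the zero invariant subspace and so contributes the universal model ${\bf S}_i$ with $\chi_i({\bf S}_i)=1$, the multiplicative property of Theorem \ref{multiplicative} gives $\chi({\bf T})=t\cdot 1\cdots 1=t$.

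It remains to confirm that ${\bf T}$ is pure, lies in the polyball, and has rank one. Purity and membership in ${\bf B_n}(\cM(t)^\perp)$ follow from the identity ${\bf \Delta_{M}^p}(I_{\cM(t)^\perp})=P_{\cM(t)^\perp}{\bf \Delta_{S}^p}(I)|_{\cM(t)^\perp}\ge 0$ used in the proof of Theorem \ref{Euler-cha}, together with the purity of ${\bf S}$. For the rank, the full defect ${\bf \Delta_{T}}(I_{\cM(t)^\perp})=P_{\cM(t)^\perp}P_\CC P_{\cM(t)^\perp}$ has rank at most one, and it is nonzero precisely because the constant vector $1$ belongs to $\cM(t)^\perp$: every generator $e_\beta^1$ of $\cM_1(t)$ has length $|\beta|=k_p\ge 1$, so $1\notin\cM_1(t)$ and hence $1\notin\cM(t)$. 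This verification is the only point that is not pure bookkeeping, and I expect no real obstacle, since all the analytic work --- existence of $\chi$, the asymptotic formulas, and the multiplicative property --- has already been carried out.
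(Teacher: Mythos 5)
Your proposal is correct and follows essentially the same route as the paper: the case $t=1$ is handled by $\chi({\bf S})=1$, and for $t\in[0,1)$ the paper likewise takes $\cM(t):=(\cM_1(t)^\perp\otimes F^2(H_{n_2})\otimes\cdots\otimes F^2(H_{n_k}))^\perp$ with $\cM_1(t)$ from \eqref{Mi} and invokes the computation from the proof of Theorem \ref{value} (which rests on Theorems \ref{Euler-cha} and \ref{multiplicative}, exactly as you do). Your explicit verification that $1\in\cM(t)^\perp$, hence $\rank({\bf T})=1$, is a detail the paper leaves implicit but is correctly argued.
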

\begin{proof}
If $t=1$, the Euler characteristic  $\chi({\bf S})=1$. When $t\in [0,1)$,  we consider the
subspace $\cM(t):=(\cM_1(t)^\perp\otimes F^2(H_{n_2})\otimes \cdots \otimes F^2(H_{n_k}))^\perp$, where $\cM_1(t)$ is defined by relation \eqref{Mi}.
As in the proof of Theorem \ref{value}, one can show that
$\chi (P_{\cM(t)^\perp} {\bf S}|_{\cM(t)^\perp})=t$. This   completes the proof.
\end{proof}

Using Corollary \ref{range} and tensoring  with the identity on $\CC^m$, one can easily see that for any $t\in [0,m]$, there exists a pure element ${\bf T}$  in the polyball ${\bf B_n}$,  with
$\rank ({\bf T})=m$ and $\chi ({\bf T})=t$. Consequently, the range of the Euler characteristic coincides with the interval $[0,\infty)$.

\begin{theorem} Let  ${\bf T}\in {\bf B_n}(\cH)$  have finite rank and let $\cM$ be an invariant subspace under ${\bf T}$ such that  ${\bf T}|_\cM\in {\bf B_n}(\cM)$ and $\dim \cM^\perp<\infty$. Then ${\bf T}|_\cM$ has finite rank and
$$
\left|\chi({\bf T})-\chi({\bf T}|_\cM)\right|
\leq \dim \cM^\perp\prod_{i=1}^k (n_i-1).
$$
\end{theorem}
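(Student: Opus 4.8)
The plan is to reduce everything to the rank asymptotic formula of Theorem \ref{Euler1} and to compare the two tuples inside the single space $\cH=\cM\oplus\cM^\perp$. Write $A_{i,j}:=T_{i,j}|_\cM$ and ${\bf A}:={\bf T}|_\cM$; since $\cM$ is invariant, each $T_{i,j}$ is upper triangular with respect to $\cH=\cM\oplus\cM^\perp$ with $(1,1)$ corner $A_{i,j}$. Set $r:=\dim\cM^\perp$, abbreviate $\Lambda_{\bf q}:=(id-\Phi_{T_1}^{q_1+1})\circ\cdots\circ(id-\Phi_{T_k}^{q_k+1})$, and put $D_{\bf q}:=\prod_{i=1}^k(1+n_i+\cdots+n_i^{q_i})$. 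By Theorem \ref{Euler1}, $\chi({\bf T})=\lim_{\bf q}\rank[\Lambda_{\bf q}(I_\cH)]/D_{\bf q}$. First I would record that ${\bf A}$ has finite rank defect, so that $\chi({\bf A})$ is defined: this follows since $\rank[{\bf \Delta_A}(I_\cM)]=\rank[{\bf \Delta_T}(P_\cM)]\le\rank[{\bf \Delta_T}(I_\cH)]+\rank[{\bf \Delta_T}(P_{\cM^\perp})]<\infty$.

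The key algebraic identity I would prove next is that, for every $W\in B(\cM)$, writing $\widehat W:=W\oplus0$ on $\cH$, one has $\Phi_{T_i}(\widehat W)=\widehat{\Phi_{A_i}(W)}$; this is a direct two-by-two block computation using the vanishing $(2,1)$ corner of $T_{i,j}$. Iterating gives $\Lambda_{\bf q}(P_\cM)=\widehat{(id-\Phi_{A_1}^{q_1+1})\circ\cdots\circ(id-\Phi_{A_k}^{q_k+1})(I_\cM)}$, so by Theorem \ref{Euler1} applied to ${\bf A}$ we get $\rank[\Lambda_{\bf q}(P_\cM)]=\dim M_{\bf q}({\bf A})$ and $\chi({\bf A})=\lim_{\bf q}\rank[\Lambda_{\bf q}(P_\cM)]/D_{\bf q}$. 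Thus, setting $X:=\Lambda_{\bf q}(I_\cH)\ge0$ and $Y:=\Lambda_{\bf q}(P_\cM)\ge0$, we have $\rank X=\dim M_{\bf q}({\bf T})$, $\rank Y=\dim M_{\bf q}({\bf A})$, and, by linearity, the self-adjoint difference is $Z:=X-Y=\Lambda_{\bf q}(P_{\cM^\perp})$. Since both limits exist, it suffices to control $\limsup_{\bf q}\rank Z_\pm/D_{\bf q}$: from $X\le Y+Z_+$ and $Y\le X+Z_-$, eigenvalue monotonicity yields $\rank X-\rank Y\le\rank Z_+$ and $\rank Y-\rank X\le\rank Z_-$, hence $|\dim M_{\bf q}({\bf T})-\dim M_{\bf q}({\bf A})|\le\max(\rank Z_+,\rank Z_-)$.

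The main obstacle — and the place where the sharp constant $\prod_i(n_i-1)$ appears instead of the crude $\prod_i(1+n_i)$ — is the estimation of $\rank Z_\pm$. Here I would expand $Z=\sum_{S\subseteq\{1,\dots,k\}}(-1)^{|S|}\Phi_S^{\bf q}(P_{\cM^\perp})$, where $\Phi_S^{\bf q}:=\prod_{i\in S}\Phi_{T_i}^{q_i+1}$ and each $\Phi_S^{\bf q}(P_{\cM^\perp})=\sum_{|\beta_i|=q_i+1,\,i\in S}\big(\prod_{i\in S}T_{i,\beta_i}\big)P_{\cM^\perp}\big(\prod_{i\in S}T_{i,\beta_i}\big)^*$ is a positive operator of rank at most $r\prod_{i\in S}n_i^{q_i+1}$. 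Grouping by parity gives $Z=P_{\mathrm{even}}-P_{\mathrm{odd}}$ with $P_{\mathrm{even}},P_{\mathrm{odd}}\ge0$; from $Z\le P_{\mathrm{even}}$ and $-Z\le P_{\mathrm{odd}}$, eigenvalue monotonicity yields $\rank Z_+\le\rank P_{\mathrm{even}}$ and $\rank Z_-\le\rank P_{\mathrm{odd}}$, whence $\rank Z_\pm\le\sum_{S}r\prod_{i\in S}n_i^{q_i+1}$ summed over the relevant parity class.

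Finally I would let all $q_i\to\infty$. Dividing by $D_{\bf q}=\prod_i\frac{n_i^{q_i+1}-1}{n_i-1}$, the only surviving contribution is the top term $S=\{1,\dots,k\}$, for which $r\prod_i n_i^{q_i+1}/D_{\bf q}\to r\prod_i(n_i-1)$, while every proper subset $S$ omits some factor $n_{i_0}^{q_{i_0}+1}$ and therefore contributes $o(1)$. Since $S=\{1,\dots,k\}$ lies in the even or odd class according to the parity of $k$, exactly one of $\rank P_{\mathrm{even}}$, $\rank P_{\mathrm{odd}}$ carries this term while the other is $o(D_{\bf q})$; in either case $\limsup_{\bf q}\max(\rank Z_+,\rank Z_-)/D_{\bf q}\le r\prod_i(n_i-1)$. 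Combining with the second paragraph yields $|\chi({\bf T})-\chi({\bf A})|=\lim_{\bf q}|\dim M_{\bf q}({\bf T})-\dim M_{\bf q}({\bf A})|/D_{\bf q}\le\dim\cM^\perp\prod_{i=1}^k(n_i-1)$, as claimed.
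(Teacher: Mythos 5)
Your proof is correct and follows essentially the same route as the paper: both reduce to the rank asymptotics of Theorem \ref{Euler1}, identify $\rank\bigl[(id-\Phi_{T_1|_\cM}^{q_1+1})\circ\cdots\circ(id-\Phi_{T_k|_\cM}^{q_k+1})(I_\cM)\bigr]$ with $\rank\bigl[(id-\Phi_{T_1}^{q_1+1})\circ\cdots\circ(id-\Phi_{T_k}^{q_k+1})(P_\cM)\bigr]$ (you supply the block-triangular computation the paper leaves implicit), and then control the rank of $(id-\Phi_{T_1}^{q_1+1})\circ\cdots\circ(id-\Phi_{T_k}^{q_k+1})(P_{\cM^\perp})$. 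The only difference is that your parity decomposition into $P_{\mathrm{even}}-P_{\mathrm{odd}}$ is unnecessary for the stated constant: plain subadditivity of rank over all $2^k$ terms already gives the bound $\dim\cM^\perp\cdot\prod_{i=1}^k(1+n_i^{q_i+1})$, and dividing by $\prod_{i=1}^k(1+n_i+\cdots+n_i^{q_i})$ this too converges to $\dim\cM^\perp\cdot\prod_{i=1}^k(n_i-1)$, which is exactly the paper's argument.
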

\begin{proof} First, note that $\rank ({\bf T}|_\cM)=\rank {\bf \Delta_T}(P_\cM)$. Taking into account  the fact that
${\bf \Delta_T}(P_\cM)={\bf \Delta_T}(I_\cH)-{\bf \Delta_T}(P_{\cM^\perp})$, we deduce that $\rank ({\bf T}|_\cM)<\infty$.
On the other hand, since $\cM$ is an invariant subspace under ${\bf T}$, we have
\begin{equation*}
\begin{split}
\rank &\left[\left(id -\Phi_{T_1|_\cM}^{q_1+1}\right)\circ \cdots \circ\left(id -\Phi_{ T_k|_\cM}^{q_k+1}\right)(I_\cM)\right]\\
&= \rank \left[\left(id -\Phi_{T_1}^{q_1+1}\right)\circ \cdots \circ\left(id -\Phi_{ T_k}^{q_k+1}\right)(P_\cM)\right]\\
&\leq \rank\left[\left(id -\Phi_{T_1}^{q_1+1}\right)\circ \cdots \circ\left(id -\Phi_{ T_k}^{q_k+1}\right)(I_\cH)\right]+\rank \left[\left(id -\Phi_{T_1}^{q_1+1}\right)\circ \cdots \circ\left(id -\Phi_{ T_k}^{q_k+1}\right)(P_{\cM^\perp})\right].
\end{split}
\end{equation*}
Since
\begin{equation*}
\begin{split}
\rank &\left[\left(id -\Phi_{T_1}^{q_1+1}\right)\circ \cdots \circ\left(id -\Phi_{ T_k}^{q_k+1}\right)(P_{\cM^\perp})\right]\leq (1+n_1^{q_1+1})\cdots (1+n_k^{q_k+1})
\rank [P_{\cM^\perp}],
\end{split}
\end{equation*}
we deduce that
\begin{equation*}
\begin{split}
&\left|\frac{\rank\left[\left(id -\Phi_{T_1}^{q_1+1}\right)\circ \cdots \circ\left(id -\Phi_{ T_k}^{q_k+1}\right)(I_\cH)\right]}{{\prod_{i=1}^k(1+n_i+\cdots + n_i^{q_i})}}-\frac{\rank \left[\left(id -\Phi_{T_1|_\cM}^{q_1+1}\right)\circ \cdots \circ\left(id -\Phi_{ T_k|_\cM}^{q_k+1}\right)(I_\cM)\right]}{{\prod_{i=1}^k(1+n_i+\cdots + n_i^{q_i})}}
 \right|\\
 &\qquad\qquad\leq \frac{(1+n_1^{q_1+1})\cdots (1+n_k^{q_k+1})}{{\prod_{i=1}^k(1+n_i+\cdots + n_i^{q_i})}}\text{\rm trace}\,[P_{\cM^\perp}].
\end{split}
\end{equation*}
 Since  $n_i\geq 2$, we can use Theorem \ref{Euler1} and  obtain
$$
\left|\chi({\bf T})-\chi({\bf T}|_\cM)\right|
\leq \dim \cM^\perp\prod_{i=1}^k (n_i-1).
$$
The proof is complete.
\end{proof}

\begin{theorem} Let  ${\bf T}\in {\bf B_n}(\cH)$  have finite rank and let $\cM$ be a co-invariant subspace under ${\bf T}$   with $\dim \cM^\perp<\infty$. Then $P_\cM{\bf T}|_\cM$ has finite rank and
$$
\chi({\bf T})=\chi(P_\cM{\bf T}|_\cM).
$$
\end{theorem}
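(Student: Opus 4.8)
The plan is to exhibit the compression ${\bf M}=P_\cM{\bf T}|_\cM$ as an \emph{exact} compression of ${\bf T}$ at the level of the defect maps, using co-invariance, and then to invoke the asymptotic formula of Theorem~\ref{Euler1}. Set $M_{i,j}:=P_\cM T_{i,j}|_\cM$, $Q:=P_{\cM^\perp}=I-P_\cM$, and let $C:B(\cH)\to B(\cM)$ denote the compression $C(Y):=P_\cM Y|_\cM$. Co-invariance of $\cM$, that is $T_{i,j}^*\cM\subseteq\cM$, is equivalent to $P_\cM T_{i,j}=P_\cM T_{i,j}P_\cM$ and hence to $M_{i,j}^*=T_{i,j}^*|_\cM$. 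First I would record the identity $C\circ\Phi_{T_i}=\Phi_{M_i}\circ C$: for $\xi\in\cM$ one has $M_{i,j}^*\xi=T_{i,j}^*\xi\in\cM$, so $\Phi_{M_i}(C(Y))\xi=\sum_j M_{i,j}\bigl(P_\cM Y T_{i,j}^*\xi\bigr)=\sum_j P_\cM T_{i,j}Y T_{i,j}^*\xi=P_\cM\Phi_{T_i}(Y)\xi$, the middle equality being $P_\cM T_{i,j}P_\cM=P_\cM T_{i,j}$.

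Next I would iterate: $C\circ\Phi_{T_i}^{m}=\Phi_{M_i}^{m}\circ C$ for every $m$, so $C$ intertwines each factor $id-\Phi_{T_i}^{q_i+1}$ with $id-\Phi_{M_i}^{q_i+1}$; composing these in order and using $C(I_\cH)=I_\cM$ gives, for all ${\bf q}=(q_1,\dots,q_k)\in\ZZ_+^k$,
\begin{equation*}
(id-\Phi_{M_1}^{q_1+1})\circ\cdots\circ(id-\Phi_{M_k}^{q_k+1})(I_\cM)=P_\cM\left[(id-\Phi_{T_1}^{q_1+1})\circ\cdots\circ(id-\Phi_{T_k}^{q_k+1})(I_\cH)\right]\Big|_\cM.
\end{equation*}
The same reasoning for ${\bf 0}\le{\bf p}\le(1,\dots,1)$ yields ${\bf\Delta_M^p}(I_\cM)=P_\cM{\bf\Delta_T^p}(I_\cH)|_\cM\ge 0$, so ${\bf M}\in{\bf B_n}(\cM)$ and $\rank{\bf\Delta_M}(I_\cM)\le\rank{\bf\Delta_T}(I_\cH)<\infty$; thus ${\bf M}$ has finite rank.

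Now write $A_{\bf q}:=(id-\Phi_{T_1}^{q_1+1})\circ\cdots\circ(id-\Phi_{T_k}^{q_k+1})(I_\cH)$, a positive finite-rank operator by \eqref{KPK}. The displayed identity says the rank of its left-hand side equals $\rank(P_\cM A_{\bf q}P_\cM)$, so everything reduces to comparing this with $\rank A_{\bf q}$. Since $A_{\bf q}-P_\cM A_{\bf q}P_\cM=QA_{\bf q}+P_\cM A_{\bf q}Q$ and each summand has rank at most $\rank Q=\dim\cM^\perp$, I obtain
\begin{equation*}
\left|\rank A_{\bf q}-\rank\bigl(P_\cM A_{\bf q}P_\cM\bigr)\right|\le 2\dim\cM^\perp
\end{equation*}
for every ${\bf q}$, a bound independent of ${\bf q}$. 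Dividing by $\prod_{i=1}^k(1+n_i+\cdots+n_i^{q_i})\to\infty$ (here $n_i\ge 2$), the discrepancy vanishes, and the third asymptotic formula of Theorem~\ref{Euler1}, applied to both ${\bf T}$ and ${\bf M}$ (each of finite rank), gives $\chi({\bf T})=\chi({\bf M})=\chi(P_\cM{\bf T}|_\cM)$.

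The decisive point, and the only place where something could go wrong, is the exact factorization ${\bf\Delta_M^p}(I_\cM)=P_\cM{\bf\Delta_T^p}(I_\cH)|_\cM$ through the compression $C$. This is precisely what co-invariance provides and what separates this statement from the preceding (invariant) theorem: there the analogous identity carries the correction term ${\bf\Delta_T}(P_{\cM^\perp})$, whose rank grows like $\prod_i(1+n_i^{q_i+1})$ and survives the normalization, leaving only the inequality $|\chi({\bf T})-\chi({\bf T}|_\cM)|\le\dim\cM^\perp\prod_i(n_i-1)$; here the error is the ${\bf q}$-independent constant $2\dim\cM^\perp$, so exact equality results.
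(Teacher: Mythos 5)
Your proof is correct and follows essentially the same route as the paper's: the compression identity $\Phi_{M_i}^{m}\circ C=C\circ\Phi_{T_i}^{m}$ furnished by co-invariance, a comparison of $\rank A_{\bf q}$ with the rank of its compression in which the error is bounded by a constant multiple of $\dim\cM^\perp$ independent of ${\bf q}$, and the third asymptotic formula of Theorem~\ref{Euler1}. The only cosmetic difference is that you bound the rank discrepancy by $2\dim\cM^\perp$ via a perturbation estimate, while the paper's chain of inequalities (using positivity of $A_{\bf q}$) gives $\dim\cM^\perp$; either constant vanishes under the normalization by $\prod_{i=1}^k(1+n_i+\cdots+n_i^{q_i})$.
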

\begin{proof} Denote $A:=(A_1,\ldots, A_k)$ and  $A_i:=(A_{i,1},\ldots, A_{i,n_i})$, where $A_{i,j}:=P_\cM T_{i,j}|_\cM$ for $i\in \{1,\ldots, k\}$ and $j\in \{1,\ldots, n_i\}$.
Using the fact that $\Phi_{A_i}^{q_i}(I_{\cM})=P_{\cM} \Phi_{T_i}^{q_i}(I_\cH)|_\cM$, we deduce that
\begin{equation*}
\begin{split}
&\rank \left[\left(id -\Phi_{A_1}^{q_1+1}\right)\circ \cdots \circ\left(id -\Phi_{ A_k}^{q_k+1}\right)(I_\cM)\right]\\
&\leq \rank \left[\left(id -\Phi_{T_1}^{q_1+1}\right)\circ \cdots \circ\left(id -\Phi_{ T_k}^{q_k+1}\right)(I_\cH)\right]\\
&\leq \rank \left[P_\cM \left(id -\Phi_{T_1}^{q_1+1}\right)\circ \cdots \circ\left(id -\Phi_{ T_k}^{q_k+1}\right)(I_\cH)\right]+
\rank\left[P_{\cM^\perp}\left(id -\Phi_{T_1}^{q_1+1}\right)\circ \cdots \circ\left(id -\Phi_{ T_k}^{q_k+1}\right)(I_\cH)\right]\\
&\leq \rank\left[ \left(id -\Phi_{A_1}^{q_1+1}\right)\circ \cdots \circ\left(id -\Phi_{ A_k}^{q_k+1}\right)(I_\cM)\right]+
\rank \left[P_{\cM^\perp}\left(id -\Phi_{T_1}^{q_1+1}\right)\circ \cdots \circ\left(id -\Phi_{ T_k}^{q_k+1}\right)(I_\cH)\right].
\end{split}
\end{equation*}
Note also that
$$
\rank\left[P_{\cM^\perp}\left(id -\Phi_{T_1}^{q_1+1}\right)\circ \cdots \circ\left(id -\Phi_{ T_k}^{q_k+1}\right)(I_\cH)\right]
\leq \dim\cM^\perp.
$$
Hence, we deduce that
\begin{equation*}
\begin{split}
&\left|\frac{\rank\left[\left(id -\Phi_{T_1}^{q_1+1}\right)\circ \cdots \circ\left(id -\Phi_{ T_k}^{q_k+1}\right)(I_\cH)\right]}{{\prod_{i=1}^k(1+n_i+\cdots + n_i^{q_i})}}-\frac{\rank\left[\left(id -\Phi_{A_1}^{q_1+1}\right)\circ \cdots \circ\left(id -\Phi_{ A_k}^{q_k+1}\right)(I_\cM)\right]}{{\prod_{i=1}^k(1+n_i+\cdots + n_i^{q_i})}}
 \right|\\
 &\qquad\qquad\leq \frac{ 1}{{\prod_{i=1}^k(1+n_i+\cdots + n_i^{q_i})}}\dim\cM^\perp.
\end{split}
\end{equation*}
Now, using Theorem \ref{Euler1}, we conclude that $\chi({\bf T})=\chi({\bf A})$, which completes the proof.
\end{proof}

According to \cite{Po-curvature}, there are invariant subspaces $\cM_1\subset F^2(H_{n_1})$ which do not contain nonzero polynomials in $F^2(H_{n_1})$. This implies that the set
$\{P_{\cM_1^\perp} e^1_{\alpha_1}\}_{\alpha_1\in \FF_{n_1}^+, |\alpha_1|\leq m}$ is linearly independent for any $m\in \NN$. Consequently, the set
$\{(P_{\cM_1^\perp}\otimes I\otimes \cdots \otimes I) (e^1_{\alpha_1}\otimes \cdots \otimes e_{\alpha_k}^k)\}_{\alpha_i\in \FF_{n_i}^+, |\alpha_i|\leq m}$ is linearly independent for each $m\in \NN$. This shows that the subspace
$\cM_1\otimes F^2(H_{n_2})\otimes \cdots \otimes F^2(H_{n_k})$ does not contain nonzero polynomials. On the other hand, according to \cite{Po-charact} (see also \cite{Po-analytic}),  any invariant subspace $\cM_1\subset F^2(H_{n_1})$ is of Beurling type. Therefore,   $\cM_1\otimes F^2(H_{n_2})\otimes \cdots \otimes F^2(H_{n_k})$ is also of Beurling type.

\begin{lemma} \label{prelim} Let $\cM\subset F^2(H_{n_1})\otimes\cdots \otimes  F^2(H_{n_k})$ be an  nonzero  invariant subspace  satisfying either one of the
 following conditions:
 \begin{enumerate}
 \item[(i)] $\cM$  is a Beurling type invariant subspace which does not contain any polynomial;

 \item[(ii)] $\cM=(\cM_1^\perp\otimes \cdots \cM_k^\perp)^\perp$, where $\{0\}\neq \cM_i\subset F^2(H_{n_i})$ are invariant subspaces  which do not contain  nonzero polynomials.
 \end{enumerate}
 Then
 $$
 \text{\rm curv} \, (P_{\cM^\perp}{\bf S}|_{\cM^\perp})<1 \quad  \text{ and } \quad
 \chi(P_{\cM^\perp}{\bf S}|_{\cM^\perp})=1.
 $$
\end{lemma}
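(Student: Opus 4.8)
The plan is to treat the two assertions separately, establishing $\chi(P_{\cM^\perp}{\bf S}|_{\cM^\perp})=1$ first and then $\text{\rm curv}\,(P_{\cM^\perp}{\bf S}|_{\cM^\perp})<1$. For the Euler characteristic I would invoke Theorem \ref{Euler-cha} (with $\cE=\CC$) to replace $\chi(P_{\cM^\perp}{\bf S}|_{\cM^\perp})$ by $\chi(\cM^\perp)$. The single observation driving this part is: if an invariant subspace $\cN\subseteq\otimes_{i=1}^kF^2(H_{n_i})$ contains no nonzero polynomial, then for every ${\bf q}$ the vectors $\{P_{\cN^\perp}(e^1_{\alpha_1}\otimes\cdots\otimes e^k_{\alpha_k}):|\alpha_i|\leq q_i\}$ are linearly independent, since a vanishing combination would place a polynomial in $\cN$. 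Hence $\rank[P_{\cN^\perp}P_{\leq(q_1,\ldots,q_k)}]=\rank[P_{\leq(q_1,\ldots,q_k)}]$ and $\chi(\cN^\perp)=1$.

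In case (i) this observation applies directly to $\cM$, yielding $\chi(\cM^\perp)=1$. In case (ii) I would instead use that $\cM^\perp=\cM_1^\perp\otimes\cdots\otimes\cM_k^\perp$, apply the observation in one variable to each nonzero $\cM_i\subseteq F^2(H_{n_i})$ to get $\chi(\cM_i^\perp)=1$, and then conclude $\chi(\cM^\perp)=\prod_{i=1}^k\chi(\cM_i^\perp)=1$ from the multiplicativity of Theorem \ref{multiplicative}. Either way Theorem \ref{Euler-cha} returns $\chi(P_{\cM^\perp}{\bf S}|_{\cM^\perp})=1$.

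For the curvature I would first record, exactly as in the proof of Theorem \ref{Euler-cha} but using the trace formula for $\text{\rm curv}$ from \cite{Po-curvature-polyballs} together with relation \eqref{KPK}, that
\begin{equation*}
\text{\rm curv}\,(P_{\cM^\perp}{\bf S}|_{\cM^\perp})=\lim_{{\bf q}}\frac{\text{\rm trace}\,[P_{\cM^\perp}P_{\leq(q_1,\ldots,q_k)}]}{\text{\rm trace}\,[P_{\leq(q_1,\ldots,q_k)}]}=1-\lim_{{\bf q}}\frac{\text{\rm trace}\,[P_{\cM}P_{\leq(q_1,\ldots,q_k)}]}{\text{\rm trace}\,[P_{\leq(q_1,\ldots,q_k)}]}.
\end{equation*}
Thus it suffices to exhibit a subspace $\cN\subseteq\cM$ of strictly positive ``trace density''. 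I would take $\cN=\widetilde\psi({\bf R}_{i,j})[\otimes_{i=1}^kF^2(H_{n_i})]$, where $\psi\in\cM$ is a unit vector generating an isometric multiplier: in case (i) the first factor $\psi_1$ of the Beurling representation of $P_\cM$ from Theorem \ref{Beurling}, and in case (ii) the tensor product $\psi^{(1)}\otimes\cdots\otimes\psi^{(k)}$ of the generating multipliers of the $\cM_i$, so that $\widetilde\psi({\bf R}_{i,j})$ is a tensor product of isometries. Since $P_\cM\geq P_\cN$, any positive lower bound for the density of $\cN$ transfers to $\cM$.

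The main work — and the step I expect to be the real obstacle — is the quantitative estimate showing the density of $\cN$ is positive. Writing $\{f_\alpha=\widetilde\psi({\bf R}_{i,j})(e^1_{\alpha_1}\otimes\cdots\otimes e^k_{\alpha_k})\}$ for the orthonormal basis of $\cN$, I would fix a multi-degree ${\bf d}=(d_1,\ldots,d_k)$ with nonzero homogeneous component $\psi_{\bf d}$ and verify that the component of $f_\alpha$ at multi-degree $(|\alpha_1|+d_1,\ldots,|\alpha_k|+d_k)$ is $\widetilde{\psi_{\bf d}}({\bf R}_{i,j})(e^1_{\alpha_1}\otimes\cdots\otimes e^k_{\alpha_k})$, whose norm is exactly $\|\psi_{\bf d}\|$ independently of $\alpha$ (appending distinct words of fixed length to a given word yields orthonormal vectors). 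This gives $\|P_{\leq(q_1,\ldots,q_k)}f_\alpha\|^2\geq\|\psi_{\bf d}\|^2$ whenever $|\alpha_i|\leq q_i-d_i$, so that
\begin{equation*}
\frac{\text{\rm trace}\,[P_\cN P_{\leq(q_1,\ldots,q_k)}]}{\text{\rm trace}\,[P_{\leq(q_1,\ldots,q_k)}]}\geq\|\psi_{\bf d}\|^2\,\frac{\prod_{i=1}^k(1+n_i+\cdots+n_i^{q_i-d_i})}{\prod_{i=1}^k(1+n_i+\cdots+n_i^{q_i})},
\end{equation*}
whose iterated limit equals $\|\psi_{\bf d}\|^2\prod_{i=1}^kn_i^{-d_i}>0$. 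Passing to iterated limits then yields $\text{\rm curv}\,(P_{\cM^\perp}{\bf S}|_{\cM^\perp})\leq 1-\|\psi_{\bf d}\|^2\prod_{i=1}^kn_i^{-d_i}<1$. The delicate conceptual point is that the hypothesis ``$\cM$ contains no polynomial'' is used only to push $\chi$ up to $1$, whereas the strict drop in the curvature comes from the orthogonal fact that $\cM$ is nonzero and therefore contains a full rank-one Beurling-type summand $\cN$ of positive density; reconciling these is precisely what produces $\text{\rm curv}<\chi$ here.
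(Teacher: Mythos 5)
Your proposal is correct, and its Euler-characteristic half is essentially the paper's own argument: the same linear-independence observation (a vanishing combination of the vectors $P_{\cM^\perp}(e^1_{\alpha_1}\otimes\cdots\otimes e^k_{\alpha_k})$ would put a nonzero polynomial in $\cM$) combined with Theorem \ref{Euler-cha}, and in case (ii) the same reduction to the one-variable case via the multiplicativity of Theorem \ref{multiplicative}. Where you genuinely diverge is the curvature inequality. The paper disposes of it in one line by citing Proposition 3.14 of \cite{Po-curvature-polyballs} in case (i) and the multiplicativity of the curvature (Corollary 2.5 of \cite{Po-curvature-polyballs}) in case (ii); you instead give a self-contained quantitative argument: extract a single isometric factor $\widetilde\psi({\bf R}_{i,j})$ from the Beurling representation of Theorem \ref{Beurling} (a tensor product of one-variable factors in case (ii)), use $P_\cM\geq \widetilde\psi({\bf R}_{i,j})\widetilde\psi({\bf R}_{i,j})^*$ and monotonicity of the trace, and bound the trace density of the range from below. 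That estimate is sound: since ${\bf S}_{1,\alpha_1}\cdots {\bf S}_{k,\alpha_k}$ is an isometry shifting multi-degrees by $(|\alpha_1|,\ldots,|\alpha_k|)$, the component of $f_\alpha={\bf S}_{1,\alpha_1}\cdots {\bf S}_{k,\alpha_k}\psi$ at multi-degree $(|\alpha_1|+d_1,\ldots,|\alpha_k|+d_k)$ is ${\bf S}_{1,\alpha_1}\cdots {\bf S}_{k,\alpha_k}\psi_{\bf d}$ of norm $\|\psi_{\bf d}\|$, which gives the density $\|\psi_{\bf d}\|^2\prod_{i=1}^k n_i^{-d_i}>0$ and hence the explicit bound $\text{\rm curv}\,(P_{\cM^\perp}{\bf S}|_{\cM^\perp})\leq 1-\|\psi_{\bf d}\|^2\prod_{i=1}^k n_i^{-d_i}<1$. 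Both routes rest on the trace formula for the curvature of a compression from \cite{Po-curvature-polyballs}, which you correctly obtain via the analogue of relation \eqref{KPK}. What your route buys is independence from the external Proposition 3.14 and an effective upper bound for the curvature; what the paper's route buys is brevity and, in case (ii), the exact product formula for the curvature rather than only the strict inequality.
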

\begin{proof} Assume that $\cM\subset F^2(H_{n_1})\otimes\cdots \otimes  F^2(H_{n_k})$ is an  nonzero   invariant subspace which does not contain any polynomial. Then the set
$\{ P_{\cM^\perp} (e^1_{\alpha_1}\otimes \cdots \otimes e_{\alpha_k}^k)\}_{\alpha_i\in \FF_{n_i}^+, |\alpha_i|\leq m}$ is linearly independent for each $m\in \NN$ and, consequently, using Theorem \ref{Euler-cha}, we deduce that
$$
\chi(P_{\cM^\perp}{\bf S}|_{\cM^\perp})=1.
$$
If, in addition, $\cM$ is a Beurling type invariant subspace, then due to \cite{Po-curvature-polyballs} (see  Proposition 3.14), we have
$\text{\rm curv} \, (P_{\cM^\perp}{\bf S}|_{\cM^\perp})<1$.

To prove part (ii), assume that $\cM=(\cM_1^\perp\otimes \cdots \cM_k^\perp)^\perp$, where $\{0\}\neq \cM_i\subset F^2(H_{n_i})$ are invariant subspaces  which do not contain  nonzero polynomials. Due to the first part of the proof, when $k=1$, we have $\chi(P_{\cM_i^\perp}{\bf S}_i|_{\cM_i^\perp})=1$ and $\text{\rm curv} \, (P_{\cM_i^\perp}{\bf S}_i|_{\cM_i^\perp})<1$. Due to Theorem \ref{Euler-cha} and Theorem \ref{multiplicative}, we have
\begin{equation*}
\begin{split}
\chi(P_{\cM^\perp}{\bf S}|_{\cM^\perp})=\chi(\cM_1^\perp\otimes \cdots \cM_k^\perp) =\prod_{i=1}^k \chi(P_{\cM_i^\perp}{\bf S_i}|_{\cM_i^\perp})=1.
\end{split}
\end{equation*}
On the other hand, using \cite{Po-curvature-polyballs} (see Corollary 2.5), we obtain
$$
\text{\rm curv} \, (P_{\cM^\perp}{\bf S}|_{\cM^\perp})=\text{\rm curv} \, (P_{\cM_1^\perp\otimes \cdots \cM_k^\perp}{\bf S}|_{\cM_1^\perp\otimes \cdots \cM_k^\perp})=\prod_{i=1}^k \text{\rm curv} \, (P_{\cM_i^\perp}{\bf S}|_{\cM_i^\perp})<1
$$
This completes the proof.
\end{proof}

The next result shows that the curvature and the Euler characteristic can be far from each each other.

\begin{proposition} For every $\epsilon\in (0,1)$ and $m\in \NN$, there exists $k\in \NN$ and ${\bf T}\in {\bf B}_{\bf n}(\cH)$ with ${\bf n}=(n_1,\ldots, n_k)$ and $n_i\geq 2$  such that
$$0< \text{\rm curv}\, ({\bf T})<\epsilon \quad \text{ and } \quad \chi({\bf T})=\rank ({\bf T})=m.
$$
\end{proposition}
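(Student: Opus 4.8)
The plan is to build $\bf T$ out of a single rank-one building block whose Euler characteristic equals $1$ while its curvature is a prescribed small positive number, and then to take an $m$-fold direct sum. The mechanism that drives the curvature down while pinning $\chi$ at $1$ is the interplay of the two multiplicativity properties over $k$-fold tensor products: the Euler characteristic is multiplicative (Theorem~\ref{multiplicative}, or Lemma~\ref{prelim}(ii)) and so is the curvature (Corollary~2.5 of \cite{Po-curvature-polyballs}, already invoked in the proof of Lemma~\ref{prelim}). Using $k$ identical factors therefore raises $\chi$ to the power $k$ (leaving the value $1$ unchanged) but raises the curvature to the power $k$ (driving it to $0$ as $k\to\infty$).

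First I would isolate one invariant subspace $\cM_0\subset F^2(H_2)$ that contains no nonzero polynomial and whose compression has curvature strictly between $0$ and $1$. Take $\cM_0:=\widetilde{\phi}({\bf R}_1)F^2(H_2)$, where $\phi=\sum_{j\geq 0}a_j z^j$ is a non-polynomial inner function (for instance a singular inner function or an infinite Blaschke product), so that $\widetilde{\phi}({\bf R}_1)=\sum_{j\geq 0}a_j {\bf R}_1^j$ is an isometry. Since $\phi$ has no polynomial inner divisor, $\cM_0$ contains no nonzero polynomial, and Lemma~\ref{prelim}(i) already yields $\chi(P_{\cM_0^\perp}{\bf S}|_{\cM_0^\perp})=1$ together with $\text{\rm curv}\,(P_{\cM_0^\perp}{\bf S}|_{\cM_0^\perp})<1$. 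The strict positivity, which Lemma~\ref{prelim} does \emph{not} provide, I would obtain by a direct computation: writing $A_r:=\sum_{j=0}^r|a_j|^2$, one has $\|P_{\cM_0}e_\beta^1\|^2=A_{r(\beta)}$, where $r(\beta)$ is the length of the trailing run of $g_1^1$'s in $\beta$; counting the $2^q$ words of length $q$ by this trailing run gives
\begin{equation*}
\text{\rm curv}\,(P_{\cM_0^\perp}{\bf S}|_{\cM_0^\perp})
=\lim_{q\to\infty}\frac{\text{\rm trace}\,[P_{\cM_0^\perp}P_{q}^{(1)}]}{2^{q}}
=1-\sum_{r=0}^{\infty}\frac{A_r}{2^{\,r+1}}=:c_0 ,
\end{equation*}
where the passage from $P_{\leq q}$ to $P_q^{(1)}$ uses the Stolz--Cesàro argument of Theorem~\ref{value}. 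Because $A_0=|a_0|^2<1$ for a non-constant $\phi$, we get $\sum_{r\geq 0}A_r2^{-(r+1)}<1$, hence $c_0\in(0,1)$.

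Finally I would assemble the block and pass to a direct sum. Fix such a $\cM_0$ with $c_0\in(0,1)$ and, for $k\in\NN$, set $\cM:=(\cM_0^\perp\otimes\cdots\otimes\cM_0^\perp)^{\perp}\subset\otimes_{i=1}^k F^2(H_2)$ ($k$ equal factors), so that ${\bf n}=(2,\ldots,2)$ with each $n_i=2\geq 2$. By Lemma~\ref{prelim}(ii) (equivalently Theorem~\ref{multiplicative}) one has $\chi(P_{\cM^\perp}{\bf S}|_{\cM^\perp})=\prod_{i=1}^k 1=1$, while the multiplicativity of the curvature gives $\text{\rm curv}\,(P_{\cM^\perp}{\bf S}|_{\cM^\perp})=c_0^{\,k}$. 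Choosing $k$ so large that $c_0^{\,k}<\epsilon/m$ and setting ${\bf T}_0:=P_{\cM^\perp}{\bf S}|_{\cM^\perp}$ produces a pure, rank-one element with $\chi({\bf T}_0)=1$ and $0<\text{\rm curv}\,({\bf T}_0)<\epsilon/m$. Now put ${\bf T}:={\bf T}_0\otimes I_{\CC^m}$, the $m$-fold direct sum of ${\bf T}_0$. Since $\chi$, the curvature, and $\rank$ are all additive under direct sums (the corollary following Theorem~\ref{Euler1}, and \cite{Po-curvature-polyballs}), this gives $\chi({\bf T})=\rank({\bf T})=m$ and $0<\text{\rm curv}\,({\bf T})=m\,c_0^{\,k}<\epsilon$, as required.

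The main obstacle is the second step: exhibiting a single non-polynomial invariant subspace with \emph{strictly positive} curvature. Everything else is a bookkeeping consequence of the multiplicativity and additivity results already proved, but the lower bound $c_0>0$ is genuinely new information not contained in Lemma~\ref{prelim} (which only gives $\text{\rm curv}\,<1$), and it must be extracted from the explicit formula $c_0=1-\sum_{r\geq 0}A_r2^{-(r+1)}$ above (or from a suitable example in \cite{Po-curvature}).
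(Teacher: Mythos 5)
Your proposal is correct and follows the same route as the paper: take a one--variable co-invariant block with $\chi=1$ and curvature $t<1$, tensor $k$ copies of it so that multiplicativity of $\chi$ (Lemma \ref{prelim}(ii)/Theorem \ref{multiplicative}) keeps the Euler characteristic at $1$ while multiplicativity of the curvature drives it to $t^k<\epsilon$. The paper's own proof is exactly this and nothing more; it cites Lemma \ref{prelim} for the block, never verifies that $t>0$ (which is needed for the asserted strict inequality $0<\text{\rm curv}\,({\bf T})$), and stops at rank one, leaving the passage to general $m$ implicit (as in the remark after Corollary \ref{range}). You supply both missing pieces: the explicit block $\cM_0=\widetilde\phi({\bf R}_1)F^2(H_2)$ with a classical non-polynomial inner $\phi$ (the operator $\sum_j a_j{\bf R}_1^j$ is indeed an isometry on the full Fock space because the autocorrelations $\sum_l a_{l+r}\bar a_l$ vanish for $r\neq 0$, and $\cM_0$ contains no nonzero polynomial since on each ray $\{\gamma' g_1^s\}_s$ the operator acts as multiplication by $\phi$ on $H^2$), together with the computation $c_0=1-\sum_{r\ge 0}A_r2^{-(r+1)}\in(0,1)$, and the $m$-fold direct sum using additivity of $\chi$, curvature, and rank. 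Both additions are sound, so your argument is, if anything, more complete than the one in the paper.
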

\begin{proof} Due to Lemma \ref{prelim}, there is an invariant subspace
$\cM_1\subseteq F^2(H_{n_1})$  such that
$\chi(P_{\cM_1^\perp}{\bf S}_1|_{\cM_1^\perp})=1$ and $t:=\text{\rm curv} \, (P_{\cM_1^\perp}{\bf S}_1|_{\cM_1^\perp})<1$. Given $\epsilon >0$, take $k\in \NN$ such that $t^k<\epsilon$.
Consider $n_1=\cdots =n_k$ and let $\cM:=(\cM_1^\perp\otimes \cdots \cM_1^\perp)^\perp$.
As in the proof of Lemma \ref{prelim}, we have
$$
\text{\rm curv} \, (P_{\cM^\perp}{\bf S}|_{\cM^\perp}) =t^k<\epsilon \quad \text{ and } \chi(P_{\cM^\perp}{\bf S}|_{\cM^\perp})=1.
$$
  The proof is complete.
\end{proof}

\bigskip

\section{A Gauss-Bonnet-Chern type  theorem on noncommutative polyballs}

In this section, we provide a characterization of the graded invariant subspaces for
the tensor product $F^2(H_{n_1})\otimes\cdots\otimes F^2(H_{n_k})$, and obtain  a version of the Gauss-Bonnet-Chern  theorem on noncommutative polyballs.

 A Hilbert space $\cH$  is called $\ZZ^k$-graded if there is a strongly continuous unitary representation  $U:\TT^k\to B(\cH)$  of the $k$-torus $\TT^k=\TT\times \cdots \times \TT$, where $\TT:=\{z\in \CC:\ |z|=1\}$.
$U$ is called the gauge group of $\cH$.

 \begin{definition}
 A tuple ${\bf T}=(T_1,\ldots, T_k)\in B(\cH)^{n_1}\times\cdots \times B(\cH)^{n_k}$ with $T_i=(T_{i,1},\ldots, T_{i,n_i})$, is called $\ZZ^k$-graded if there is a distinguished gauge group $U$ on $\cH$ such that
$$
U(\lambda)T_{i,j} U(\lambda)^*=\lambda_i T_{i,j}, \qquad \lambda=(\lambda_1,\ldots, \lambda_k)\in \TT^k,
$$
for any $i\in \{1,\ldots, k\}$ and $j\in \{1,\ldots, n_i\}$.
\end{definition}

A closed subspace $\cM\subseteq \cH$ is called graded with respect to $U$ if $U(\lambda)\cM\subseteq \cM$ for all $\lambda\in \TT^k$. In this case, the subrepresentation  $U|_\cM$ is the  gauge group on $\cM$. Similarly, the orthocomplement $\cM^\perp:=\cH\ominus \cM$  is called graded  if $U(\lambda)\cM^\perp\subseteq \cM^\perp$ for all $\lambda\in \TT^k$. Note that $\cM$ is graded if and only if $\cM^\perp$ is graded.

Assume that ${\bf T}$ is graded with respect to the gauge group $U$ on $\cH$ and let $\cM\subseteq \cH$ be  an invariant subspace under ${\bf T}$.  Note that if $\cM$ is  graded with respect to $U$, then ${\bf T}|_\cM$ is graded with respect to the subrepresentation  $U|_\cM$, i.e.
$$
U(\lambda)|_\cM(T_{i,j}|_\cM) =\lambda_i (T_{i,j}|_\cM)U(\lambda)|_\cM, \qquad \lambda=(\lambda_1,\ldots, \lambda_k)\in \TT^k,
$$
for any $i\in \{1,\ldots, k\}$ and $j\in \{1,\ldots, n_i\}$. Similarly, $\cM^\perp$ is graded with respect to $U$ and $P_{\cM^\perp}{\bf T}|_{\cM^\perp}$ is  graded with respect to the subrepresentation  $U|_{\cM^\perp}$.

We remark that the universal model ${\bf S}=({\bf S}_1,\ldots, {\bf S}_k)$,
${\bf S}_i=({\bf S}_{i,1}, \ldots, {\bf S}_{i,j})$, with ${\bf S}_{i,j}$ acting on the tensor  product
$F^2(H_{n_1})\otimes\cdots\otimes F^2(H_{n_k})$, is $\ZZ^k$-graded with respect to the canonical gauge group $U:\TT^k\to B(F^2(H_{n_1})\otimes\cdots\otimes F^2(H_{n_k}))$ defined by
$$
U(\lambda)\left( \sum_{{\alpha_i\in \FF_{n_i}^+}\atop{i\in \{1,\ldots,k\}}}
a_{\alpha_1,\ldots, \alpha_k} e^1_{\alpha_1}\otimes\cdots \otimes   e^k_{\alpha_k}\right)
=
\sum_{{\alpha_i\in \FF_{n_i}^+}\atop{i\in \{1,\ldots,k\}}}
\lambda_1^{|\alpha_1|}\cdots \lambda_k^{|\alpha|}a_{\alpha_1,\ldots, \alpha_k} e^1_{\alpha_1}\otimes \cdots \otimes  e^k_{\alpha_k}.
$$
Given $(s_1,\ldots, s_k)\in \ZZ_+^k$,  we say that a polynomial  in $F^2(H_{n_1})\otimes\cdots\otimes F^2(H_{n_k})$ is multi-homogeneous of order  $(s_1,\ldots, s_k)$ if it has the form
$$q= \sum_{{\alpha_i\in \FF_{n_i}^+, |\alpha_i|=s_i}\atop{i\in \{1,\ldots,k\}}}
a_{\alpha_1,\ldots, \alpha_k} e^1_{\alpha_1}\otimes\cdots \otimes   e^k_{\alpha_k}.
$$
This is equivalent to the fact that
 $U(\lambda)q=\lambda_1^{s_1}\cdots \lambda_k^{s_k} q$  for any
$\lambda=(\lambda_1,\ldots, \lambda_k)\in \TT^k$.

Let   $\cM\subset F^2(H_{n_1})\otimes\cdots\otimes F^2(H_{n_k})$  be an invariant subspace for the universal model ${\bf S}$.
We recall (see Corollary 2.2 from \cite{Po-curvature-polyballs}) that the  defect map ${\bf \Delta_{S}}$ is one-to-one and any  projection $P_\cM$ has the following Taylor type representation around its defect
$$
P_\cM=\sum_{s_1=0}^\infty \Phi_{{\bf S}_1}^{s_1}\left(\sum_{s_2=0}^\infty \Phi_{{\bf S}_2}^{s_2}\left(\cdots \sum_{s_k=0}^\infty \Phi_{{\bf S}_k}^{s_k} \left({\bf \Delta_{S}}(P_\cM)\right)\cdots \right)\right),
$$
where the iterated series converge in the weak operator topology. If, in addition, ${\bf \Delta_{S}}(P_\cM)\geq 0$, then
\begin{equation}
\label{PM}
P_\cM=\sum_{(s_1,\ldots, s_k)\in \ZZ_+^k} \Phi_{{\bf S}_1}^{s_1}\circ \cdots \circ \Phi_{{\bf S}_k}^{s_k}({\bf \Delta_{S}}(P_\cM)).
\end{equation}
We recall that {\it defect operator}  of $\cM$ is given by $\Delta_\cM:={\bf \Delta_{S}}(P_\cM)$. Note that $\Delta_\cM(\cM)\subseteq \cM$ and $\Delta_\cM(\cM^\perp)=\{0\}$.
Moreover, we have ${\bf \Delta_{S|_{\cM}}}(I_\cM)=\Delta_\cM|_\cM.$ Now, it is clear that $\Delta_\cM$ is positive and has finite rank if and only if ${\bf \Delta_{S|_{\cM}}}(I_\cM)$ has the same properties.
The remarks above  show that the invariant subspace $\cM$ is uniquely determined by its defect operator $\Delta_\cM$.

Note  that  any  set $\{\psi_s\}_{s\in \Lambda}$, $\Lambda\subseteq\NN$,  of
multi-homogeneous  polynomials (perhaps of different orders) generates a graded closed invariant subspace  of $F^2(H_{n_1})\otimes\cdots\otimes F^2(H_{n_k})$ by setting
$$\cM:=\overline{\text{\rm span}}\{{\bf S}_{1,\alpha_1}\cdots {\bf S}_{k,\alpha_k}\psi_s:\ \alpha\in \FF_{n_i}^+,  i\in \{1,\ldots, k\}, s\in \Lambda\}.
$$
The next theorem provides a characterization of those graded invariant subspaces $\cM\subseteq F^2(H_{n_1})\otimes\cdots\otimes F^2(H_{n_k})$ with positive defect operator $\Delta_\cM$.

 If ${f}:= \sum_{\beta_i\in \FF_{n_i}^+}a_{\beta_1,\ldots, \beta_k} e^1_{\beta_1}\otimes\cdots \otimes  e^k_{\beta_k}$  is a polynomial in   $F^2(H_{n_1})\otimes\cdots\otimes F^2(H_{n_k})$,  and $X_{i,j}$ are bounded operators on a Hilbert space, where $i\in \{1,\ldots, k\}$ and $j\in \{1,\ldots, n_i\}$,
we define the polynomial calculus
$f(X_{i,j}):=\sum_{\beta_i\in \FF_{n_i}^+}a_{\beta_1,\ldots, \beta_k}X_{1,\beta_1}\cdots X_{k,\beta_k}$.
We use the notation
   $\widetilde f({\bf R}_{i,j}):=\sum_{\beta_i\in \FF_{n_i}^+} a_{\beta_1,\ldots,\beta_k}
   {\bf R}_{1,\widetilde\beta_1} \cdots  {\bf R}_{k,\widetilde
   \beta_k}$, where
$\widetilde \beta_i=g_{j_p}^i\cdots g_{j_1}^i$ denotes the reverse of
$\beta_i=g_{j_1}^i\cdots g_{j_p}^i\in \FF_{n_i}^+$.

\begin{theorem} Let $\cM\subseteq F^2(H_{n_1})\otimes\cdots\otimes F^2(H_{n_k})$ be an invariant subspace. Then  $\cM$ is graded  and has positive  defect operator $\Delta_\cM$ if and only if there is a sequence of  multi-homogeneous  polynomials $\{\psi_s\}_{s=1}^N$, where $N\in \NN$ or $N=\infty$, with the following properties:
\begin{enumerate}
 \item[(i)] each $\psi_s$ is in the range of $\Delta_\cM$;
  \item[(ii)] $\{\widetilde\psi_s({\bf R}_{i,j})\}_{s=1}^N$ are isometries with orthogonal ranges, where $\{{\bf R}_{i,j}\}$ is the right universal model; \item[(iii)] The orthogonal projection $P_\cM$ satisfies the relation
$$
P_\cM=\sum_{s=1}^N \widetilde\psi_s(R_{i,j})\widetilde\psi_s(R_{i,j})^*,$$
 where the convergence is in the strong operator topology.
\end{enumerate}
Moreover, in this case, we have $N=\rank(\Delta_\cM)$
and
$$
  \Delta_\cM\xi=\sum_{t=1}^N\left<\xi, \psi_s\right>\psi_s, \qquad \xi\in \otimes_{i=1}^kF^2(H_{n_i}).
  $$
\end{theorem}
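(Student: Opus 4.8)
The plan is to deduce the statement from Theorem \ref{Beurling} (the Beurling-type characterization of invariant subspaces with positive defect), using the grading only to upgrade the right multipliers produced there to multi-homogeneous polynomials. The engine is that the gauge group commutes with the completely positive maps $\Phi_{{\bf S}_i}$. First I would record the intertwining: since $U(\lambda){\bf S}_{i,j}U(\lambda)^*=\lambda_i{\bf S}_{i,j}$ and $|\lambda_i|=1$, expanding $\Phi_{{\bf S}_i}$ gives $U(\lambda)\Phi_{{\bf S}_i}(X)U(\lambda)^*=\Phi_{{\bf S}_i}(U(\lambda)XU(\lambda)^*)$ for all $X$ and $\lambda\in\TT^k$; hence $\mathrm{Ad}_{U(\lambda)}$ commutes with each $\Phi_{{\bf S}_i}$ and therefore with the defect map ${\bf \Delta_{S}}$. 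When $\cM$ is graded, $U(\lambda)P_\cM U(\lambda)^*=P_\cM$, so $U(\lambda)\Delta_\cM U(\lambda)^*={\bf \Delta_{S}}(U(\lambda)P_\cM U(\lambda)^*)=\Delta_\cM$; thus $\Delta_\cM$ commutes with every $U(\lambda)$, and by the continuous functional calculus so does $\Delta_\cM^{1/2}$. Writing $\otimes_{i=1}^k F^2(H_{n_i})=\bigoplus_{{\bf d}\in\ZZ_+^k}\cH_{\bf d}$, where $\cH_{\bf d}$ is the finite-dimensional span of the multi-homogeneous polynomials of order ${\bf d}$ (the isotypic components of $U$), this commutation shows $\Delta_\cM^{1/2}$ preserves each $\cH_{\bf d}$ and the defect space $\overline{\Delta_\cM^{1/2}(\otimes_{i=1}^kF^2(H_{n_i}))}=(\ker\Delta_\cM)^\perp$ is the orthogonal direct sum of its finite-dimensional graded pieces.

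For the forward direction I would then run the construction in the proof of Theorem \ref{Beurling} with one adjustment: select the orthonormal basis $\{\chi_s\}$ of the defect space so that each $\chi_s$ lies in a single block $\cH_{\bf d}$, which is possible by the block decomposition above. Setting $\psi_s:=\Delta_\cM^{1/2}\chi_s$, the invariance of $\cH_{\bf d}$ under $\Delta_\cM^{1/2}$ forces $\psi_s$ to be multi-homogeneous of the same order as $\chi_s$; and since $\cH_{\bf d}$ is finite dimensional, $\ran(\Delta_\cM^{1/2}|_{\cH_{\bf d}})=\ran(\Delta_\cM|_{\cH_{\bf d}})$, so $\psi_s$ lies in $\ran\Delta_\cM$, yielding (i). Properties (ii) and (iii), the identity $\Delta_\cM\xi=\sum_s\langle\xi,\psi_s\rangle\psi_s$, and $N=\rank(\Delta_\cM)$ (using $\rank\Delta_\cM^{1/2}=\rank\Delta_\cM$ for positive $\Delta_\cM$) are then inherited verbatim from Theorem \ref{Beurling}, whose argument used nothing about $\{\chi_s\}$ beyond its being an orthonormal basis of the defect space.

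For the converse, suppose multi-homogeneous polynomials $\{\psi_s\}$ satisfying (i)--(iii) are given. The converse half of Theorem \ref{Beurling} applies directly to give that $\cM$ is invariant under ${\bf S}$ with $\Delta_\cM\geq 0$, so only gradedness remains. Here I would invoke $\widetilde\psi_s({\bf R}_{i,j})p=p\psi_s$, established in the proof of Theorem \ref{Beurling}, so that (iii) gives $\cM=\overline{\text{\rm span}}\{{\bf S}_{1,\alpha_1}\cdots {\bf S}_{k,\alpha_k}\psi_s:\ \alpha_i\in\FF_{n_i}^+,\ s\}$. Since each $\psi_s$ is an eigenvector of $U$ and $U(\lambda){\bf S}_{i,j}U(\lambda)^*=\lambda_i{\bf S}_{i,j}$, every spanning vector ${\bf S}_{1,\alpha_1}\cdots{\bf S}_{k,\alpha_k}\psi_s$ is again multi-homogeneous, whence $U(\lambda)\cM\subseteq\cM$ for all $\lambda\in\TT^k$ and $\cM$ is graded.

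The main obstacle is the forward direction's delicate point: producing multi-homogeneous polynomials $\psi_s$ rather than the general right multipliers of Theorem \ref{Beurling}. The crux is exactly the intertwining $\mathrm{Ad}_{U(\lambda)}\circ{\bf \Delta_{S}}={\bf \Delta_{S}}\circ \mathrm{Ad}_{U(\lambda)}$, which forces $\Delta_\cM$ to be block diagonal along the grading; once this is in hand, the finiteness of each graded block $\cH_{\bf d}$ does all the remaining work, and the rest is a bookkeeping reuse of Theorem \ref{Beurling}.
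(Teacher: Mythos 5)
Your proposal is correct, and it rests on the same structural observation as the paper's proof --- namely that $\mathrm{Ad}_{U(\lambda)}$ commutes with the defect map, so $\Delta_\cM$ commutes with the gauge group and is block-diagonal along the finite-dimensional spectral subspaces $Q_{(s_1,\ldots,s_k)}(\otimes_{i=1}^kF^2(H_{n_i}))$ --- but the two arguments assemble the conclusion differently. The paper gives a self-contained reconstruction: it diagonalizes each block $\Delta_\cM Q_{(p_1,\ldots,p_k)}$ into rank-one operators $\Lambda_t\xi=\left<\xi,\psi_t\right>\psi_t$ with multi-homogeneous $\psi_t$, proves the identity $\sum_{(s_1,\ldots,s_k)}\Phi_{{\bf S}_1}^{s_1}\circ\cdots\circ\Phi_{{\bf S}_k}^{s_k}(\Lambda_t)=\widetilde\psi_t({\bf R}_{i,j})\widetilde\psi_t({\bf R}_{i,j})^*$, and then sums these using the Taylor-type representation \eqref{PM} of $P_\cM$ around its defect, re-deriving the isometry and orthogonal-range properties at the end via the partial-isometry argument. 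You instead reduce the theorem to Theorem \ref{Beurling}: since its construction takes as input only an orthonormal basis $\{\chi_s\}$ of the defect space and outputs $\psi_s=\Delta_\cM^{1/2}\chi_s$, choosing a basis adapted to the grading (possible because $\Delta_\cM^{1/2}$ commutes with each $Q_{(s_1,\ldots,s_k)}$) immediately yields multi-homogeneous $\psi_s$, with (i) secured by the finite-dimensionality of each block (so $\ran(\Delta_\cM^{1/2}|_{\cH_{\bf d}})=\ran(\Delta_\cM|_{\cH_{\bf d}})$), and (ii)--(iii) inherited verbatim. Your route is more economical and makes the logical dependence on the Beurling-type theorem explicit, at the cost of leaning on the internal details of that proof rather than just its statement; the paper's route is longer but independent of how Theorem \ref{Beurling} was proved. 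Your treatment of the converse (gradedness of $\cM=\overline{\operatorname{span}}\{{\bf S}_{1,\alpha_1}\cdots{\bf S}_{k,\alpha_k}\psi_s\}$ because each spanning vector is multi-homogeneous) fills in a step the paper dismisses as straightforward, and is fine.
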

\begin{proof}
Assume that $\cM\subseteq F^2(H_{n_1})\otimes\cdots\otimes F^2(H_{n_k})$ is a graded  invariant subspace  and has positive  defect operator $\Delta_\cM$. Let  $U:\TT^k\to B(F^2(H_{n_1})\otimes\cdots\otimes F^2(H_{n_k}))$ be the canonical gauge group of $F^2(H_{n_1})\otimes\cdots\otimes F^2(H_{n_k})$. Then $U(\lambda_1,\ldots, \lambda_k)\cM=\cM$ for any
$(\lambda_1,\ldots, \lambda_k)\in \TT^k$ and, consequently $U(\lambda_1,\ldots, \lambda_k)$ commutes with $P_\cM$.
Taking into account that  $$
U(\lambda_1,\ldots, \lambda_k){\bf S}_{i,j} U(\lambda_1,\ldots, \lambda_k)^*=\lambda_i {\bf S}_{i,j}, \qquad \lambda=(\lambda_1,\ldots, \lambda_k)\in \TT^k,
$$
for any $i\in \{1,\ldots, k\}$ and $j\in \{1,\ldots, n_i\}$,  one can easily see that
$$
U(\lambda_1,\ldots, \lambda_k)\Delta_\cM=\Delta_\cM U(\lambda_1,\ldots, \lambda_k),  \qquad \lambda=(\lambda_1,\ldots, \lambda_k)\in \TT^k.
$$
For each $(s_1,\ldots, s_k)\in \ZZ_+^k$, let $Q_{(s_1,\ldots, s_k)}$  be the orthogonal projection of $F^2(H_{n_1})\otimes\cdots\otimes F^2(H_{n_k})$ onto the subspace of multi-homogeneous polynomials of order $(s_1,\ldots, s_k)$.
Since
$$
U(\lambda_1,\ldots, \lambda_k)=\sum_{p_1=0}^\infty\cdots \sum_{p_k=0}^\infty
\lambda_1^{\alpha_1}\cdots \lambda_k^{\alpha}Q_{(s_1,\ldots, s_k)},  \qquad \lambda=(\lambda_1,\ldots, \lambda_k)\in \TT^k,
$$
and  due to the spectral theorem, we deduce that
$$
\Delta_\cM Q_{(s_1,\ldots, s_k)}= Q_{(s_1,\ldots, s_k)}\Delta_\cM,\qquad (s_1,\ldots, s_k)\in \ZZ^k.
$$
 If $(p_1,\ldots, p_k)$ is in the set $$\Omega:=\{(s_1,\ldots, s_k)\in \ZZ^k:\ \Delta_\cM Q_{(s_1,\ldots, s_k)}\neq 0\},
  $$
  then $\Delta_\cM Q_{(p_1,\ldots, p_k)}$ is a nonzero  positive finite rank operator  supported in the space of multi-homogeneous polynomials $ Q_{(p_1,\ldots, p_k)}(\otimes_{i=1}^kF^2(H_{n_i}))$. Due to the spectral theorem, it can be expressed as a finite  sum
$\Delta_\cM Q_{(p_1,\ldots, p_k)}=\sum_{t=1}^m \Lambda_t$
 of  rank-one positive operators $\Lambda_t$ defined by
 \begin{equation}
 \label{def-La}
 \Lambda_t\xi:=\left<\xi, \psi_t\right>\psi_t, \qquad \xi\in \otimes_{i=1}^kF^2(H_{n_i}),
  \end{equation}
  where $\{\psi_t\}_{t=1}^m$ are orthonormal  multi-homogeneous polynomials in $ \Delta_\cM Q_{(p_1,\ldots, p_k)}(\otimes_{i=1}^kF^2(H_{n_i}))$.
 Note that
 \begin{equation*}
 \begin{split}
 \widetilde\psi_t({\bf R}_{i,j}) P_\CC \widetilde\psi_t({\bf R}_{i,j})^*\xi=\widetilde\psi_t({\bf R}_{i,j})(\left<\xi,\psi_t\right>)= \left<\xi,\psi_t\right> \psi_t=\Lambda_t \xi.
 \end{split}
 \end{equation*}
 Consequently,  using the fact that $\widetilde\psi_t({\bf R}_{i,j})$ is a multi-analytic operator with respect to the universal model ${\bf S}$,  we have
  \begin{equation} \label{qqqq}
 \sum_{s_1=0}^{q_1}\cdots \sum_{s_k=0}^{q_k} \Phi_{{\bf S}_1}^{s_1} \circ\cdots \circ \Phi_{{\bf S}_k}^{s_k} (\Lambda_t)
 = \widetilde\psi_t({\bf R}_{i,j})\left(\sum_{s_1=0}^{q_1}\cdots \sum_{s_k=0}^{q_k} \Phi_{{\bf S}_1}^{s_1} \circ\cdots \circ \Phi_{{\bf S}_k}^{s_k} ( P_\CC)\right) \widetilde\psi_t({\bf R}_{i,j})^*
 \end{equation}
 Taking the limits as $q_1\to\infty,\ldots, q_k\to\infty$, and using the identity
 $$
I=\sum_{(s_1,\ldots, s_k)\in \ZZ_+^k} \Phi_{{\bf S}_1}^{s_1}\circ \cdots \circ \Phi_{{\bf S}_k}^{s_k}({\bf \Delta_{S}}(I))=\sum_{(s_1,\ldots, s_k)\in \ZZ_+^k} \Phi_{{\bf S}_1}^{s_1}\circ \cdots \circ \Phi_{{\bf S}_k}^{s_k}(P_\CC),
$$
 we deduce that
 \begin{equation}\label{lim}
 \sum_{s_1=0}^{\infty}\cdots \sum_{s_k=0}^{\infty} \Phi_{{\bf S}_1}^{s_1} \circ\cdots \circ \Phi_{{\bf S}_k}^{s_k} (\Lambda_t)=\widetilde\psi_t({\bf R}_{i,j}) \widetilde\psi_t({\bf R}_{i,j})^*.
 \end{equation}
 Now, let $\{\Lambda_t\}_{t=1}^N$, where $N\in \NN$ or $N=\infty$, be the set of the rank-one positive operators associated, as above, with all the operators
 $\Delta_\cM Q_{(p_1,\ldots, p_k)}$  with $(p_1,\ldots, p_k)\in \Omega$, and let $\{\psi_t\}_{t=1}^N$ be the corresponding orthonormal multi-homogeneous polynomials,  according to relation \eqref{def-La}.  We have $\Delta_\cM=\sum_{t=1}^N \Lambda_t$ where the convergence in the strong operator topology, and $\rank \Delta_\cM=N$. Note also that
  $$
  \Delta_\cM\xi=\sum_{t=1}^N\left<\xi, \psi_t\right>\psi_t, \qquad \xi\in \otimes_{i=1}^kF^2(H_{n_i}).
  $$
   Due to the fact each completely positive map $\Phi_{{\bf S}_i}$ is $WOT$-continuous and using relation \eqref{qqqq}, we deduce that
    that
  \begin{equation*}
 \begin{split}
 \sum_{s_1=0}^{q_1}\cdots \sum_{s_k=0}^{q_k} \Phi_{{\bf S}_1}^{s_1} \circ\cdots \circ \Phi_{{\bf S}_k}^{s_k} (\Delta_\cM)
 &=\sum_{t=1}^\infty\left(\sum_{s_1=0}^{q_1}\cdots \sum_{s_k=0}^{q_k} \Phi_{{\bf S}_1}^{s_1} \circ\cdots \circ \Phi_{{\bf S}_k}^{s_k} (\Lambda_t)\right)\\
 &=
 \sum_{t=1}^\infty \widetilde\psi_t({\bf R}_{i,j})\left(\sum_{s_1=0}^{q_1}\cdots \sum_{s_k=0}^{q_k} \Phi_{{\bf S}_1}^{s_1} \circ\cdots \circ \Phi_{{\bf S}_k}^{s_k} ( P_\CC)\right) \widetilde\psi_t({\bf R}_{i,j})^*.
\end{split}
 \end{equation*}
    Due to relation \eqref{PM}, $\{\sum_{s_1=0}^{q_1}\cdots \sum_{s_k=0}^{q_k} \Phi_{{\bf S}_1}^{s_1} \circ\cdots \circ \Phi_{{\bf S}_k}^{s_k} (\Delta_\cM)\}$ is an increasing sequence of positive operators which converges strongly  to $P_\cM$  as $q_1\to\infty,\ldots, q_k\to\infty$, and
$\{ \sum_{s_1=0}^{q_1}\cdots \sum_{s_k=0}^{q_k} \Phi_{{\bf S}_1}^{s_1} \circ\cdots \circ \Phi_{{\bf S}_k}^{s_k} ( P_\CC)\}$ is an increasing sequence of positive operators which converges strongly to the identity operator on
$\otimes_{i=1}^kF^2(H_{n_i})$. Consequently,  the equalities above and relation \eqref{qqqq} can be used to deduce that
$$
P_\cM=\sum_{t=1}^\infty \widetilde\psi_t({\bf R}_{i,j})  \widetilde\psi_t({\bf R}_{i,j})^*,
$$
where the convergence is in the strong operator topology.
Let
    $\Psi:(\otimes_{i=1}^k F^2(H_{n_i}))\otimes
\CC^N \to \otimes_{i=1}^k F^2(H_{n_i})$  be the bounded operator having the $1\times N$ matrix representation
$$
[\widetilde\psi_1({\bf R}_{i,j}), \widetilde\psi_2({\bf R}_{i,j}), \ldots],
$$
  where $\CC^\infty$ stands for $\ell^2(\NN)$.
 Note that $\Psi$ is a multi-analytic operator with respect to the universal model ${\bf S}$ and $\Psi \Psi^*=P_\cM$. Therefore, $\Psi$ is a partial isometry.
Since ${\bf S}_{i,j}$ are isometries, the initial space of $\Psi$, i.e.
$$\Psi^*(\otimes_{i=1}^k F^2(H_{n_i}))
 =\{x\in (\otimes_{i=1}^k F^2(H_{n_i}))\otimes \CC^N: \ \|\Psi x\|=\|x\|\}$$
  is
 reducing under each operator ${\bf S}_{i,j}\otimes I_{\CC^N}$.
 Consequently, since ${\bf \Delta_{S}}(I)=P_\CC$,  to prove that $\Psi$ is an isometry, it is enough to show that
$$\CC= P_\CC\widetilde\psi_t({\bf R}_{i,j})^* (\otimes_{i=1}^k F^2(H_{n_i}))
 \quad \text{ for each }  t\in \{1,\ldots, N\}.
 $$
The latter equality is true due to the fact that
$$
P_\CC\widetilde\psi_t({\bf R}_{i,j})^* (g_t)=\left<\widetilde\psi_t({\bf R}_{i,j})^* (\psi_t),1\right>=\|\psi_t\|^2=1.
$$
 The converse of the theorem is straightforward.
\end{proof}

The next result is a Gauss-Bonnet-Chern type  theorem for rank-one,  graded, and  pure elements  in noncommutative polyballs.
\begin{theorem}\label{GBC} If  ${\bf T}$  is a   graded pure element of rank one  in the noncommutative polyball ${\bf B_n}(\cH)$, with $n_i\geq 2$, then
$$
\text{\rm curv}\, ({\bf T})=\chi({\bf T}).
$$
\end{theorem}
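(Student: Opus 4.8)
The plan is to reduce both invariants to the normalized trace and the normalized rank of one and the same sequence of operators, and then to use the grading to show that this operator is an orthogonal projection at every finite level, so that its rank and trace coincide identically — not merely asymptotically. Since $\text{\rm curv}$ and $\chi$ are unitary invariants, I would first invoke the structure theorem of \cite{Po-Berezin-poly}: a pure element of rank-one defect is unitarily equivalent to ${\bf M}:=P_{\cM^\perp}{\bf S}|_{\cM^\perp}$ for some invariant subspace $\cM\subseteq\otimes_{i=1}^k F^2(H_{n_i})$, and, ${\bf T}$ being $\ZZ^k$-graded, this $\cM$ may be taken graded. Thus it suffices to treat ${\bf T}={\bf M}$.

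Writing $A_{\bf q}:={\bf K_M^*}(P_{\leq(q_1,\ldots,q_k)}\otimes I){\bf K_M}$ for ${\bf q}=(q_1,\ldots,q_k)\in\ZZ_+^k$, Theorem \ref{Euler1} (which requires $n_i\geq 2$) gives
$$
\chi({\bf M})=\lim_{\bf q}\frac{\rank[A_{\bf q}]}{\rank[P_{\leq(q_1,\ldots,q_k)}]},
$$
while the asymptotic trace formula for the curvature from \cite{Po-curvature-polyballs} gives
$$
\text{\rm curv}({\bf M})=\lim_{\bf q}\frac{\text{\rm trace}[A_{\bf q}]}{\text{\rm trace}[P_{\leq(q_1,\ldots,q_k)}]}.
$$
Since $P_{\leq(q_1,\ldots,q_k)}$ is a projection, $\rank[P_{\leq(q_1,\ldots,q_k)}]=\text{\rm trace}[P_{\leq(q_1,\ldots,q_k)}]$; the two quotients have identical denominators, so the theorem reduces to the level-by-level identity $\rank[A_{\bf q}]=\text{\rm trace}[A_{\bf q}]$. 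To make this usable I would identify $A_{\bf q}$ concretely: exactly as in the computation of Theorem \ref{Euler-cha} with $\cE=\CC$, the intertwining relation ${\bf K_M}M_{i,j}^*=({\bf S}_{i,j}^*\otimes I){\bf K_M}$ yields $A_{\bf q}=(id-\Phi_{M_1}^{q_1+1})\circ\cdots\circ(id-\Phi_{M_k}^{q_k+1})(I_{\cM^\perp})=P_{\cM^\perp}P_{\leq(q_1,\ldots,q_k)}P_{\cM^\perp}$, the compression of the Fock-space projection to $\cM^\perp$.

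The decisive step is the grading. Because $\cM$ is graded, $U(\lambda)\cM=\cM$ for all $\lambda\in\TT^k$, so $P_{\cM^\perp}$ commutes with every $U(\lambda)$; inserting the spectral decomposition $U(\lambda)=\sum_{(s_1,\ldots,s_k)\in\ZZ_+^k}\lambda_1^{s_1}\cdots\lambda_k^{s_k}Q_{(s_1,\ldots,s_k)}$ and integrating in $\lambda$ — precisely the argument used earlier in the section for $\Delta_\cM$ — forces $P_{\cM^\perp}$ to commute with each multi-homogeneous projection $Q_{(s_1,\ldots,s_k)}$, hence with $P_{\leq(q_1,\ldots,q_k)}=\sum_{s_i\leq q_i}Q_{(s_1,\ldots,s_k)}$. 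Thus $P_{\cM^\perp}$ and $P_{\leq(q_1,\ldots,q_k)}$ are commuting orthogonal projections, their product $A_{\bf q}=P_{\cM^\perp}P_{\leq(q_1,\ldots,q_k)}$ is itself an orthogonal projection, and therefore $\rank[A_{\bf q}]=\text{\rm trace}[A_{\bf q}]$ for every ${\bf q}$. The curvature and Euler quotients are then the same sequence, so their limits (which both exist by the results quoted above) agree, giving $\text{\rm curv}({\bf T})=\chi({\bf T})$.

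The only genuine subtlety — and the point where the graded hypothesis is indispensable — is this last commutation. Without it, $A_{\bf q}$ is merely a positive contraction whose rank can strictly exceed its trace, and one recovers only the inequality $\text{\rm curv}({\bf T})\leq\chi({\bf T})$ of the earlier Corollary. The grading upgrades this to an exact level-by-level equality, which is exactly the Gauss-Bonnet-Chern phenomenon; accordingly I expect the commutation of $P_{\cM^\perp}$ with the gauge group to be the crux, the remaining manipulations being the bookkeeping already developed in Theorems \ref{Euler1} and \ref{Euler-cha}.
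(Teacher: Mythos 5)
Your overall strategy is the same as the paper's in its decisive step: both proofs come down to showing that the operator ${\bf K_T^*}(P_{\leq(q_1,\ldots,q_k)}\otimes I){\bf K_T}$ (equivalently ${\bf K_T}{\bf K_T^*}(P_{\leq(q_1,\ldots,q_k)}\otimes I)$) is an orthogonal projection for every ${\bf q}$, so that its rank and trace agree level by level and the curvature and Euler quotients are literally the same net. The endgame you describe is correct. The problem is the sentence ``and, ${\bf T}$ being $\ZZ^k$-graded, this $\cM$ may be taken graded,'' which you assert with no argument. That assertion is not a quotable fact --- the paper only recalls that pure rank-one elements have the form $P_{\cM^\perp}{\bf S}|_{\cM^\perp}$ --- and proving it is where essentially all of the work in the paper's proof is spent. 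The gauge group $U$ on $\cH$ is an arbitrary strongly continuous representation of $\TT^k$, with spectral subspaces indexed by $\ZZ^k$ and determined only up to a character shift; to transfer it to the \emph{canonical} grading of $\otimes_{i=1}^k F^2(H_{n_i})$ through the Berezin kernel you must first show that ${\bf \Delta_T}(I)^{1/2}$ commutes with every spectral projection $Q_{(s_1,\ldots,s_k)}$ of $U$, then use the rank-one hypothesis to conclude that the one-dimensional defect space is a joint eigenspace, i.e.\ sits inside a single $\cH_{(d_1,\ldots,d_k)}$, and only then deduce from ${\bf K_T}T_{i,j}^*=({\bf S}_{i,j}^*\otimes I){\bf K_T}$ and purity that $U(\lambda){\bf K_T^*}={\bf K_T^*}\bigl(\lambda_1^{d_1}\cdots\lambda_k^{d_k}\,U_{\mathrm{can}}(\lambda)\otimes I\bigr)$, whence $\cM^\perp$ is invariant under the canonical gauge group. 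This is precisely the role of the rank-one hypothesis, which your write-up never actually uses beyond invoking the structure theorem; if the defect had rank two its range could meet several spectral subspaces, the transfer would fail, and indeed the paper's Theorem \ref{GBC2} has to pass to a smaller invariant subspace $\cH_0$ in that case.

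Once that lemma is supplied, the rest of your argument closes correctly: a graded $\cM$ gives $[P_{\cM^\perp},Q_{(s_1,\ldots,s_k)}]=0$, hence $P_{\cM^\perp}P_{\leq(q_1,\ldots,q_k)}$ is a projection, and rank equals trace at every level. Note that the paper itself never performs the reduction to a graded subspace of the Fock space inside the proof of Theorem \ref{GBC}; instead it establishes the orthogonality relation ${\bf \Delta_T}(I)^{1/2}T_{k,\alpha_k}^*\cdots T_{1,\alpha_1}^*T_{1,\beta_1}\cdots T_{k,\beta_k}{\bf \Delta_T}(I)^{1/2}=0$ unless $|\alpha_i|=|\beta_i|$ for all $i$, directly on $\cH$, and reads off the commutation of ${\bf K_T}{\bf K_T^*}$ with $P_{q_1}^{(1)}\otimes\cdots\otimes P_{q_k}^{(k)}\otimes I$ from it; your route and the paper's are two packagings of the same computation, so filling the gap will require you to reproduce that computation rather than bypass it.
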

\begin{proof} Let $U:\TT^k\to B(\cH)$ be a strongly continuous unitary representation of the $k$-torus  and assume that ${\bf T}$ is graded with respect to the gauge group $U$.
The spectral subspaces of $U$ are
$$
\cH_{\bf s}=\cH_{(s_1,\ldots, s_k)}:=\left\{ h\in \cH:\ U(\lambda_1,\ldots, \lambda_k)h
=\lambda_1^{s_1}\cdots \lambda_k^{s_k}h \text{ for } (\lambda_1\ldots, \lambda_k)\in \TT^k\right\},
$$
where $ {\bf s}=(s_1,\ldots, s_k)\in \ZZ^k$.    With respect to the orthogonal  decomposition $\cH=\bigoplus_{{\bf s}\in \ZZ^k} \cH_{\bf s}$ we have
$$
U(\lambda_1,\ldots, \lambda_k)=\sum_{(s_1,\ldots, s_k)\in \ZZ^k} \lambda_1^{s_1}\cdots \lambda_k^{s_k} Q_{(s_1,\ldots, s_k)},\qquad
(\lambda_1\ldots, \lambda_k)\in \TT^k,
$$
where $Q_{(s_1,\ldots, s_k)}$ is the orthogonal projection  of $\cH$ onto
$\cH_{(s_1,\ldots, s_k)}$.
Note that if ${\bf T}$ is graded with respect to the  gauge group $U$, then
$$
T_{i,j}\cH_{(s_1,\ldots, s_k)}\subseteq \cH_{(s_1,\ldots,s_{i-1}, s_{i}+1, s_{i+1},\ldots, s_k)},\qquad (s_1,\ldots, s_k)\in \ZZ^k,
$$
for any $i\in \{1,\ldots, k\}$ and $j\in \{1,\ldots, n_i\}$.
Indeed, if $h\in \cH_{(s_1,\ldots, s_k)}$, then $U(\lambda_1,\ldots, \lambda_k)h=\lambda_1^{s_1}\cdots \lambda_k^{s_k} h$ and
\begin{equation*}
\begin{split}
U(\lambda_1,\ldots, \lambda_k)T_{i,j}h = \lambda_i T_{i,j} U(\lambda_1,\ldots, \lambda_k)h
  =\lambda_1^{s_1}\cdots\lambda_{i-1}^{s_{i-1}}\lambda_i^{s_i+1}\lambda_{i+1}^{s_{i+1}}\cdots \lambda_k^{s_k} T_{i,j}h
\end{split}
\end{equation*}
for any $(\lambda_1\ldots, \lambda_k)\in \TT^k$.
Consequently, $T_{i,j}h\in \cH_{(s_1,\ldots,s_{i-1}, s_{i}+1, s_{i+1},\ldots, s_k)}$, which proves our assertion. Therefore,
\begin{equation}\label{TQQT}
T_{i,j} Q_{(s_1,\ldots, s_k)}=Q_{(s_1,\ldots,s_{i-1}, s_{i}+1, s_{i+1},\ldots, s_k)} T_{i,j}
\end{equation}
for any $(s_1,\ldots, s_k)\in \ZZ^k$, $i\in \{1,\ldots, k\}$, and $j\in \{1,\ldots, n_i\}$.
 This also implies
\begin{equation}\label{TQQT2}
 Q_{(s_1,\ldots, s_k)}T_{i,j}^*=T_{i,j}^*Q_{(s_1,\ldots,s_{i-1}, s_{i}+1, s_{i+1},\ldots, s_k)}.
\end{equation}
Consequently, taking into account that $U(\lambda_1,\ldots, \lambda_k)Q_{(s_1,\ldots, s_k)}
=\lambda_1^{s_1}\cdots \lambda_k^{s_k}Q_{(s_1,\ldots, s_k)}$ for any $(\lambda_1,\ldots, \lambda_k)\in \TT^k$, one can easily check that
$${\bf \Delta_{T}}(I)U(\lambda_1,\ldots, \lambda_k)Q_{(s_1,\ldots, s_k)}=U(\lambda_1,\ldots, \lambda_k)Q_{(s_1,\ldots, s_k)}     {\bf \Delta_{T}}(I)
$$
for any $(s_1,\ldots, s_k)\in \ZZ^k$,
which implies
$${\bf \Delta_{T}}(I)^{1/2}U(\lambda_1,\ldots, \lambda_k) =U(\lambda_1,\ldots, \lambda_k) {\bf \Delta_{T}}(I)^{1/2},\qquad (\lambda_1,\ldots, \lambda_k)\in \TT^k.
$$
Due to the spectral theorem, we deduce that
$$
{\bf \Delta_{T}}(I)^{1/2} Q_{(s_1,\ldots, s_k)}= Q_{(s_1,\ldots, s_k)}     {\bf \Delta_{T}}(I)^{1/2},\qquad (s_1,\ldots, s_k)\in \ZZ^k.
$$

Now, we assume that ${\bf T}$ has rank one. Then there exists $(d_1,\ldots, d_k)\in \ZZ_+^k$ such that
\begin{equation}\label{DQDQ}
{\bf \Delta_{T}}(I)^{1/2}=Q_{(d_1,\ldots, d_k)}{\bf \Delta_{T}}(I)^{1/2}={\bf \Delta_{T}}(I)^{1/2}Q_{(d_1,\ldots, d_k)}\neq 0
\end{equation}
and ${\bf \Delta_{T}}(I)^{1/2} Q_{(s_1,\ldots, s_k)}=0$ for any $(s_1,\ldots, s_k)\in \ZZ^k$ with $(s_1,\ldots, s_k)\neq (d_1,\ldots, d_k)$. Consequently,
\begin{equation*}
{\bf \Delta_{T}}(I)^{1/2}(\cH)={\bf \Delta_{T}}(I)^{1/2}\cH_{(d_1,\ldots, d_k)} \subseteq \bigoplus_{s_1\geq d_1,\ldots, s_k\geq d_k} \cH_{(s_1, \ldots, s_k)}.
\end{equation*}
Since $\bigoplus_{s_1\geq d_1,\ldots, s_k\geq d_k} \cH_{(s_1, \ldots, s_k)}$ is invariant under each operator $T_{i,j}$ for  any $i\in \{1,\ldots, k\}$ and $j\in \{1,\ldots, n_i\}$, we have
\begin{equation} \label{inclu}
T_{1,\beta_1}\cdots T_{k,\beta_k}{\bf \Delta_{T}}(I)^{1/2}\cH\subseteq  \bigoplus_{s_1\geq d_1,\ldots, s_k\geq d_k} \cH_{(s_1, \ldots, s_k)}
\end{equation}
for any $\beta_i\in \FF_{n_i}^+$, $ i=1,\ldots,k$.
On the other hand, the  noncommutative Berezin kernel
   $${\bf K_{T}}: \cH \to F^2(H_{n_1})\otimes \cdots \otimes  F^2(H_{n_k}) \otimes  \overline{{\bf \Delta_{T}}(I) (\cH)}$$
   satisfies the relation
\begin{equation}
\label{KDD}
 {\bf K_{T}^*} (e^1_{\beta_1}\otimes \cdots \otimes  e^k_{\beta_k}\otimes  h)=T_{1,\beta_1}\cdots T_{k,\beta_k}{\bf \Delta_{T}}(I)^{1/2}h
 \end{equation}
 for any $\beta_i\in \FF_{n_i}^+$, $ i=1,\ldots,k$,  and $h\in \cH$.
Since  ${\bf T}$ is a pure element in the polyball, the Berezin kernel ${\bf K_{T}}$ is an isometry and, therefore,
$$
\cH=\overline{\text{\rm range} \,{\bf K_{T}^*}}=
\overline{\text{\rm span}}\{ T_{1,\beta_1}\cdots T_{k,\beta_k}{\bf \Delta_{T}}(I)^{1/2}h:\ \beta_i\in \FF_{n_i}^+, h\in \cH\}.
$$
Hence, and using relation \eqref{inclu},  we deduce that
$$
\cH=\bigoplus_{s_1\geq d_1,\ldots, s_k\geq d_k} \cH_{(s_1, \ldots, s_k)}.
$$
On the other hand, taking into account relations \eqref{TQQT}, \eqref{TQQT2}, and \eqref{DQDQ},  we have
\begin{equation*}
\begin{split}
&{\bf \Delta_{T}}(I)^{1/2}T_{k,\alpha_k}^*\cdots T_{1,\alpha_k}^* T_{1,\beta_1}\cdots T_{k,\beta_k}{\bf \Delta_{T}}(I)^{1/2}Q_{(d_1,\ldots, d_k)}\\
&= {\bf \Delta_{T}}(I)^{1/2}T_{k,\alpha_k}^*\cdots T_{1,\alpha_k}^* Q_{(d_1+|\beta_1|,\ldots, d_k+|\beta_k|)}T_{1,\beta_1}\cdots T_{k,\beta_k}{\bf \Delta_{T}}(I)^{1/2}\\
&= {\bf \Delta_{T}}(I)^{1/2}Q_{(d_1+|\beta_1|-|\alpha_1|,\ldots, d_k+|\beta_k|-|\alpha_k|)}T_{k,\alpha_k}^*\cdots T_{1,\alpha_k}^* T_{1,\beta_1}\cdots T_{k,\beta_k}{\bf \Delta_{T}}(I)^{1/2}\\
&=\begin{cases}
Q_{(d_1,\ldots, d_k )}{\bf \Delta_{T}}(I)^{1/2}T_{k,\alpha_k}^*\cdots T_{1,\alpha_k}^* T_{1,\beta_1}\cdots T_{k,\beta_k}{\bf \Delta_{T}}(I)^{1/2}& \ \text{\rm if } |\beta_1|=|\alpha_1|, \ldots, |\beta_k|=|\alpha_k|\\
0 & \  \text{\rm otherwise}
\end{cases}
\end{split}
\end{equation*}
for any $\alpha_1,\beta_1\in \FF_{n_1}^+, \ldots,  \alpha_k,\beta_k\in \FF_{n_k}^+$.
Consequently, we have
\begin{equation}\label{DTD}
{\bf \Delta_{T}}(I)^{1/2}T_{k,\alpha_k}^*\cdots T_{1,\alpha_k}^* T_{1,\beta_1}\cdots T_{k,\beta_k}{\bf \Delta_{T}}(I)^{1/2}=0
\end{equation}
for any $\alpha_1,\beta_1\in \FF_{n_1}^+, \ldots, \alpha_k,\beta_k\in \FF_{n_k}^+$, except when $|\beta_1|=|\alpha_1|, \ldots, |\beta_k|=|\alpha_k|$.
Note that due to relations \eqref{KDD} and \eqref{DTD}, for any $(q_1,\ldots, q_k)\in \ZZ_+^k$, we have
\begin{equation*}
\begin{split}
&(P_{q_1}^{(1)}\otimes \cdots \otimes P_{q_k}^{(k)}\otimes I_\cH){\bf K_{T}}
{\bf K_{T}^*} (e^1_{\beta_1}\otimes \cdots \otimes  e^k_{\beta_k}\otimes h)\\
&=\sum_{\alpha_1\in \FF_{n_1}^+, |\alpha_1|=q_1}\cdots \sum_{\alpha_k\in \FF_{n_k}^+, |\alpha_k|=q_k}
e^1_{\alpha_1}\otimes \cdots \otimes  e^k_{\alpha_k}\otimes{\bf \Delta_{T}}(I)^{1/2}T_{k,\alpha_k}^*\cdots T_{1,\alpha_k}^* T_{1,\beta_1}\cdots T_{k,\beta_k}{\bf \Delta_{T}}(I)^{1/2}h
 \end{split}
\end{equation*}
if $|\beta_1|=q_1, \ldots, |\beta_k|=q_k$,  and zero otherwise.
Now, one can see that
\begin{equation*}
\begin{split}
{\bf K_{T}}
{\bf K_{T}^*} (P_{q_1}^{(1)}\otimes \cdots \otimes P_{q_k}^{(k)}\otimes I_\cH) =(P_{q_1}^{(1)}\otimes \cdots \otimes P_{q_k}^{(k)}\otimes I_\cH){\bf K_{T}}
{\bf K_{T}^*}.
\end{split}
\end{equation*}
for any $(q_1,\ldots, q_k)\in \ZZ_+^k$, where $P_{q_i}^{(i)}$ is the orthogonal projection of the full Fock space $F^2(H_{n_i})$ onto the span of all vectors $e_{\alpha_i}^i$ with  $\alpha\in  \FF_{n_i}^+$ and $|\alpha_i|=q_i$. Since ${\bf K_{T}}
{\bf K_{T}^*}$ is a projection, we deduce that
${\bf K_{T}}
{\bf K_{T}^*} (P_{q_1}^{(1)}\otimes \cdots \otimes P_{q_k}^{(k)}\otimes I_\cH)\}_{(q_1,\ldots, q_k)\in \ZZ_+^k}$ is a net of orthogonal projections with orthogonal ranges. This implies that
$$
\sum_{{(s_1,\ldots, s_k)\in \ZZ_+^k}\atop{(s_1,\ldots, s_k)\leq (q_1,\ldots, q_k)}} {\bf K_{T}}
{\bf K_{T}^*} (P_{q_1}^{(1)}\otimes \cdots \otimes P_{q_k}^{(k)}\otimes I_\cH)= {\bf K_{T}}
{\bf K_{T}^*}(P_{\leq (q_1,\ldots, q_k)}\otimes I)
$$
is an orthogonal projection for any $(q_1,\ldots, q_k)\in \ZZ_+^k$.
Consequently, we have
$$
\text{\rm trace}\,[{\bf K_{T}}{\bf K_{T}^*}(P_{\leq (q_1,\ldots, q_k)}\otimes I)]
=\rank [{\bf K_{T}}{\bf K_{T}^*}(P_{\leq (q_1,\ldots, q_k)}\otimes I)].
$$
Applying Theorem \ref{Euler1}  and  \cite{Po-curvature-polyballs} (see Theorem 1.3), and using the fact that ${\bf K_T}$ is an isometry, we deduce that
\begin{equation*}
\begin{split}
\text{\rm curv}\,({\bf T})&=\lim_{q_1\to\infty}\cdots\lim_{q_k\to\infty}
\frac{\text{\rm trace}\,\left[ {\bf K_{T}^*}(P_{\leq(q_1,\ldots, q_k)}  \otimes I){\bf K_{T}}\right]}{\text{\rm trace}\,\left[P_{\leq(q_1,\ldots, q_k)}\right]}
=\lim_{q_1\to\infty}\cdots\lim_{q_k\to\infty}
\frac{\text{\rm trace}\,\left[{\bf K_{T}} {\bf K_{T}^*}(P_{\leq(q_1,\ldots, q_k)}  \otimes I)\right]}{\text{\rm trace}\,\left[P_{\leq(q_1,\ldots, q_k)}\right]}
\\
&=\lim_{q_1\to\infty}\cdots\lim_{q_k\to\infty}
\frac{\rank\left[{\bf K_{T}} {\bf K_{T}^*}(P_{\leq(q_1,\ldots, q_k)}  \otimes I) \right]}{\rank\left[P_{\leq(q_1,\ldots, q_k)}\right]}
=\lim_{q_1\to\infty}\cdots\lim_{q_k\to\infty}
\frac{\rank\left[ {\bf K_{T}^*}(P_{\leq(q_1,\ldots, q_k)}  \otimes I){\bf K_{T}}\right]}{\rank\left[P_{\leq(q_1,\ldots, q_k)}\right]}\\
&=\chi({\bf T}).
\end{split}
\end{equation*}
The proof is complete.
\end{proof}

\begin{theorem}
If $\cM$ is a graded  invariant subspace of the tensor product $F^2(H_{n_1})\otimes\cdots\otimes F^2(H_{n_k})$,
then
\begin{equation*}
\begin{split}
\lim_{q_1\to\infty}\cdots\lim_{q_k\to\infty}
\frac{\rank\left[P_{\cM^\perp}P_{\leq(q_1,\ldots, q_k)}  \right]}{\rank\left[P_{\leq(q_1,\ldots, q_k)}\right]}
=\lim_{q_1\to\infty}\cdots\lim_{q_k\to\infty}
\frac{\text{\rm trace}\,\left[P_{\cM^\perp}P_{\leq(q_1,\ldots, q_k)}  \right]}{\text{\rm trace}\, \left[P_{\leq(q_1,\ldots, q_k)}\right]}.
\end{split}
\end{equation*}
This is equivalent to
$$\chi(P_{\cM^\perp}{\bf S}|_{\cM^\perp})
=\text{\rm curv}\,(P_{\cM^\perp}{\bf S}|_{\cM^\perp}).
$$
\end{theorem}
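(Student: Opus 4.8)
The plan is to exploit the grading to reduce the entire statement to the elementary fact that the trace of an orthogonal projection equals its rank; indeed I claim the numerators of the two iterated limits agree term by term, so the limits are trivially equal once one knows they exist.

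First I would record that, since $\cM$ is graded with respect to the canonical gauge group $U:\TT^k\to B(F^2(H_{n_1})\otimes\cdots\otimes F^2(H_{n_k}))$, we have $U(\lambda)\cM=\cM$ for every $\lambda\in\TT^k$, so $P_{\cM^\perp}$ commutes with each $U(\lambda)$. The spectral projections of $U$ are precisely $E_{(s_1,\ldots,s_k)}:=P_{s_1}^{(1)}\otimes\cdots\otimes P_{s_k}^{(k)}$, the projections onto the multi-homogeneous polynomials of order $(s_1,\ldots,s_k)$, because $U(\lambda)=\sum_{(s_1,\ldots,s_k)\in\ZZ_+^k}\lambda_1^{s_1}\cdots\lambda_k^{s_k}E_{(s_1,\ldots,s_k)}$. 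The spectral theorem then yields that $P_{\cM^\perp}$ commutes with each $E_{(s_1,\ldots,s_k)}$.

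The key step would be to observe that each $P_{\cM^\perp}E_{(s_1,\ldots,s_k)}$ is a product of two commuting orthogonal projections, hence an orthogonal projection, and that for distinct multi-indices these have orthogonal ranges since $E_{(s_1,\ldots,s_k)}E_{(s_1',\ldots,s_k')}=0$. As $P_{\leq(q_1,\ldots,q_k)}=\sum_{(s_1,\ldots,s_k)\leq(q_1,\ldots,q_k)}E_{(s_1,\ldots,s_k)}$, the sum
\[
P_{\cM^\perp}P_{\leq(q_1,\ldots,q_k)}=\sum_{(s_1,\ldots,s_k)\leq(q_1,\ldots,q_k)}P_{\cM^\perp}E_{(s_1,\ldots,s_k)}
\]
is again an orthogonal projection for every $(q_1,\ldots,q_k)\in\ZZ_+^k$. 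Consequently $\rank[P_{\cM^\perp}P_{\leq(q_1,\ldots,q_k)}]=\text{\rm trace}[P_{\cM^\perp}P_{\leq(q_1,\ldots,q_k)}]$, so the two iterated limits share identical numerators and are therefore equal (their existence being guaranteed by Theorem \ref{Euler1} for the rank side and by \cite{Po-curvature-polyballs} for the trace side).

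To close, I would identify both sides with the named invariants. By Theorem \ref{Euler-cha} applied with $\cE=\CC$, the rank-limit equals $\chi(P_{\cM^\perp}{\bf S}|_{\cM^\perp})$; and by the trace computation carried out in the proof of Theorem \ref{Euler-cha}, together with the asymptotic curvature formula from \cite{Po-curvature-polyballs}, the trace-limit equals $\text{\rm curv}(P_{\cM^\perp}{\bf S}|_{\cM^\perp})$, giving the stated equivalence. I do not anticipate a real obstacle: the whole argument turns on the commutation $P_{\cM^\perp}E_{(s_1,\ldots,s_k)}=E_{(s_1,\ldots,s_k)}P_{\cM^\perp}$, and the only place the hypothesis is used essentially is the grading of $\cM$, without which $P_{\cM^\perp}P_{\leq(q_1,\ldots,q_k)}$ need not be a projection and rank and trace generically differ.
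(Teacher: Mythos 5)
Your argument is correct, and it is a more self-contained route than the one the paper actually takes. The paper's proof is a one-line deduction: it applies Theorem \ref{Euler-cha} (to identify the rank-limit with $\chi({\bf M})$ for ${\bf M}=P_{\cM^\perp}{\bf S}|_{\cM^\perp}$), the Gauss-Bonnet-Chern Theorem \ref{GBC} for graded pure rank-one elements (to get $\text{\rm curv}({\bf M})=\chi({\bf M})$), and Theorem 3.1 of \cite{Po-curvature-polyballs} (to identify the trace-limit with $\text{\rm curv}({\bf M})$). You instead prove the rank-equals-trace identity directly on the Fock space: gradedness of $\cM$ forces $P_{\cM^\perp}$ to commute with the gauge group, hence with each spectral projection $Q_{(s_1,\ldots,s_k)}=P_{s_1}^{(1)}\otimes\cdots\otimes P_{s_k}^{(k)}$, so each $P_{\cM^\perp}Q_{(s_1,\ldots,s_k)}$ is a projection and the sum $P_{\cM^\perp}P_{\leq(q_1,\ldots,q_k)}$ is an orthogonal projection, whence its rank and trace coincide term by term; this is exactly the mechanism that powers the paper's proof of Theorem \ref{GBC}, where the same conclusion is reached for the abstract operator ${\bf K_T}{\bf K_T^*}(P_{\leq(q_1,\ldots,q_k)}\otimes I)$ via the relations \eqref{TQQT}--\eqref{DQDQ}, but in the concrete case ${\bf T}={\bf M}$ one has ${\bf K_M}{\bf K_M^*}=P_{\cM^\perp}$ (up to the identification of the defect space) and your direct commutation argument short-circuits the Berezin-kernel computation. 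What your approach buys is transparency and independence from Theorem \ref{GBC}; what the paper's buys is generality, since Theorem \ref{GBC} covers arbitrary graded pure rank-one elements of ${\bf B_n}(\cH)$, not only compressions of ${\bf S}$. Two small remarks: since the two numerators are literally identical, the existence of either iterated limit already gives the other, so you only need one of your two existence citations; and in identifying the trace-limit with $\text{\rm curv}({\bf M})$ you should note that the identity $(id-\Phi_{M_1}^{q_1+1})\circ\cdots\circ(id-\Phi_{M_k}^{q_k+1})(I_{\cM^\perp})=P_{\cM^\perp}(P_{\leq(q_1,\ldots,q_k)}\otimes I_\cE)|_{\cM^\perp}$ from the proof of Theorem \ref{Euler-cha} passes to traces by cyclicity, which is what the citation of \cite{Po-curvature-polyballs} supplies.
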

\begin{proof}
Combining Theorem \ref{Euler-cha} with Theorem \ref{GBC}, and using Theorem 3.1 from \cite{Po-curvature-polyballs}, the result follows.
\end{proof}

A closer look at the proof of Theorem \ref{GBC}, reveals that if ${\bf T}\in {\bf B_n}(\cH)$ is   a  pure element of finite rank  and
\begin{equation*}
{\bf \Delta_{T}}(I)^{1/2}T_{k,\alpha_k}^*\cdots T_{1,\alpha_k}^* T_{1,\beta_1}\cdots T_{k,\beta_k}{\bf \Delta_{T}}(I)^{1/2}=0
\end{equation*}
for any $\alpha_1,\beta_1\in \FF_{n_1}^+, \ldots, \alpha_k,\beta_k\in \FF_{n_k}^+$, except when $|\beta_1|=|\alpha_1|, \ldots, |\beta_k|=|\alpha_k|$, then
$$
\text{\rm curv}\, ({\bf T})=\chi({\bf T}).
$$

Our final result is a version of the Gauss-Bonnet-Chern   theorem for graded pure elements with finite rank in the noncommutative polyball.
\begin{theorem}\label{GBC2} Let ${\bf T}\in {\bf B_n}(\cH)$, with $n_i\geq 2$,  be  a  pure element of finite rank which is graded with respect to a gauge group $U$, and let $$\cH=\bigoplus_{{\bf s}\in \ZZ^k} \cH_{\bf s}$$ be
the  corresponding orthogonal  decomposition.  Then the spectral subspaces  $\cH_{\bf s}$  of $U$ are finite dimensional  and there exist  ${\bf c},{\bf d}\in \ZZ^k$  with ${\bf c}\leq {\bf d}$ such that
 $\cH=\bigoplus_{{\bf s}\in \ZZ^k, {\bf s}\geq {\bf c}} \cH_{\bf s}$ and
   $$\cH_0:=\bigoplus_{{\bf s}\in \ZZ^k, {\bf s}\geq {\bf d}} \cH_{\bf s}$$
  is an invariant subspace   of ${\bf T}$  with the property that ${\bf T}|_{\cH_0}$  is a finite rank element in ${\bf B_n}(\cH_0)$ and
$$
\text{\rm curv}\, ({\bf T}|_{\cH_0})=\chi({\bf T}|_{\cH_0}).
$$
\end{theorem}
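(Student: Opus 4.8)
The plan is to imitate the proof of Theorem~\ref{GBC}, weakening the rank-one hypothesis to finite rank, and to single out an invariant subspace $\cH_0$ on which the compression of ${\bf T}$ has its defect supported in one single multi-degree, so that the orthogonality observation recorded in the remark following Theorem~\ref{GBC} applies verbatim. First I would carry over the grading analysis of that proof: from $U(\lambda)T_{i,j}U(\lambda)^*=\lambda_i T_{i,j}$ one obtains the spectral subspaces $\cH_{\bf s}$, the raising relation $T_{i,j}\cH_{\bf s}\subseteq \cH_{{\bf s}+\varepsilon_i}$ (where $\varepsilon_i$ is the $i$th standard vector of $\ZZ^k$), and the commutation ${\bf \Delta_{T}}(I)^{1/2}Q_{\bf s}=Q_{\bf s}{\bf \Delta_{T}}(I)^{1/2}$. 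Since ${\bf \Delta_{T}}(I)$ is graded and of finite rank, it is supported on a finite set $F\subseteq\ZZ^k$ of multi-degrees and the defect space $\cD:=\overline{{\bf \Delta_{T}}(I)(\cH)}$ is finite dimensional and graded, $\cD=\bigoplus_{{\bf e}\in F}\cD_{\bf e}$. Because ${\bf T}$ is pure, ${\bf K_{T}}$ is isometric and $\cH=\overline{\text{\rm span}}\{T_{1,\beta_1}\cdots T_{k,\beta_k}{\bf \Delta_{T}}(I)^{1/2}h\}$ by \eqref{KDD}; as $T_{1,\beta_1}\cdots T_{k,\beta_k}$ carries grade ${\bf e}$ to grade ${\bf e}+(|\beta_1|,\ldots,|\beta_k|)$, each $\cH_{\bf s}$ is spanned by the finitely many vectors $T_{1,\beta_1}\cdots T_{k,\beta_k}\xi$ with $\xi\in\cD_{\bf e}$, ${\bf e}\in F$, and ${\bf e}+(|\beta_i|)_i={\bf s}$. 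This yields $\dim\cH_{\bf s}<\infty$ and, with ${\bf c}:=(\min_{{\bf e}\in F}e_1,\ldots,\min_{{\bf e}\in F}e_k)$, the identity $\cH=\bigoplus_{{\bf s}\geq{\bf c}}\cH_{\bf s}$.

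Next I would set ${\bf d}:=(\max_{{\bf e}\in F}e_1,\ldots,\max_{{\bf e}\in F}e_k)$ and $\cH_0:=\bigoplus_{{\bf s}\geq{\bf d}}\cH_{\bf s}$, which is invariant under each $T_{i,j}$ by the raising relation. The crucial claim is that ${\bf T}_0:={\bf T}|_{\cH_0}$ has defect ${\bf \Delta_{T_0}}(I_{\cH_0})$ equal to the projection $Q_{\bf d}|_{\cH_0}$ onto the bottom grade. To prove it, for $h\in\cH_{\bf s}$ with ${\bf s}\geq{\bf d}$ I expand ${\bf \Delta_{T_0}}(I_{\cH_0})=\sum_{{\bf p}\in\{0,1\}^k}(-1)^{|{\bf p}|}\prod_i\Phi_{(T_0)_i}^{p_i}(I_{\cH_0})$ and evaluate the quadratic form $\langle{\bf \Delta_{T_0}}(I_{\cH_0})h,h\rangle=\sum_{\bf p}(-1)^{|{\bf p}|}\sum_{\beta:|\beta_i|=p_i}\|(T_0)_{k,\beta_k}^*\cdots (T_0)_{1,\beta_1}^*h\|^2$. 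Writing $(T_0)_{i,j}^*=P_{\cH_0}T_{i,j}^*|_{\cH_0}$ and tracking grades, any word that lowers a coordinate $i$ with $s_i=d_i$ leaves $\cH_0$ and is annihilated by $P_{\cH_0}$, while every surviving word agrees with the unrestricted $T_{k,\beta_k}^*\cdots T_{1,\beta_1}^*$; putting $Z:=\{i:s_i=d_i\}$, the sum collapses to $\langle\,\prod_{i\notin Z}(id-\Phi_{T_i})(I)\,h,h\rangle$. Inverting the commuting maps $id-\Phi_{T_i}$, $i\in Z$, through the purity telescoping (as in \eqref{id-Q}) gives $\prod_{i\notin Z}(id-\Phi_{T_i})(I)=\sum_{(q_i)_{i\in Z}\geq 0}\prod_{i\in Z}\Phi_{T_i}^{q_i}\big({\bf \Delta_{T}}(I)\big)$, and the right-hand side is supported on multi-degrees whose $j$th coordinate is at most $d_j$ for every $j\notin Z$. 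When ${\bf s}\neq{\bf d}$ there is some $j\notin Z$ with $s_j>d_j$, so the pairing is $0$; when ${\bf s}={\bf d}$ only ${\bf p}={\bf 0}$ survives and the value is $\|h\|^2$. Hence ${\bf \Delta_{T_0}}(I_{\cH_0})=Q_{\bf d}|_{\cH_0}$, positive of finite rank $\dim\cH_{\bf d}$. The same bookkeeping applied to each partial defect ${\bf \Delta_{T_0}^p}(I_{\cH_0})$ identifies it on every grade with a partial defect $\prod_{i\in S}(id-\Phi_{T_i})(I)\geq 0$ of ${\bf T}$ for a suitable $S\subseteq\{1,\ldots,k\}$, so ${\bf T}_0\in{\bf B_n}(\cH_0)$; and $\Phi_{(T_0)_i}^m(I_{\cH_0})\leq P_{\cH_0}\Phi_{T_i}^m(I)|_{\cH_0}\to 0$ shows ${\bf T}_0$ is pure.

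With the defect of ${\bf T}_0$ concentrated in the single grade ${\bf d}$, the conclusion is immediate. Since ${\bf \Delta_{T_0}}(I_{\cH_0})^{1/2}=Q_{\bf d}|_{\cH_0}$ and $(T_0)_{k,\alpha_k}^*\cdots(T_0)_{1,\alpha_1}^*(T_0)_{1,\beta_1}\cdots(T_0)_{k,\beta_k}$ moves grade ${\bf d}$ to grade ${\bf d}+(|\beta_i|-|\alpha_i|)_i$, the product ${\bf \Delta_{T_0}}(I_{\cH_0})^{1/2}(T_0)_{k,\alpha_k}^*\cdots(T_0)_{1,\alpha_1}^*(T_0)_{1,\beta_1}\cdots(T_0)_{k,\beta_k}{\bf \Delta_{T_0}}(I_{\cH_0})^{1/2}$ vanishes unless $|\beta_i|=|\alpha_i|$ for all $i$. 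This is exactly the hypothesis of the remark following Theorem~\ref{GBC}, which then yields $\text{\rm curv}({\bf T}_0)=\chi({\bf T}_0)$. I expect the defect computation of the second paragraph to be the main obstacle—especially the telescoping identity together with the grade bookkeeping that confines the surviving terms below ${\bf d}$ in the off-$Z$ coordinates; by contrast the finite dimensionality of the $\cH_{\bf s}$ and the two cone statements are routine once the grading relations are in hand.
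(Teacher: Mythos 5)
Your argument is correct and follows the paper's overall strategy — same choice of ${\bf c}$ and ${\bf d}$ (coordinatewise extremes of the support of the graded defect), same subspace $\cH_0$, and the same central structural fact, namely that ${\bf \Delta}_{{\bf T}|_{\cH_0}}(I_{\cH_0})=Q_{\bf d}|_{\cH_0}$ — but you reach that fact and the final equality by a different technical route. The paper multiplies the purity expansion $I=\sum_{\bf p}\Phi_{T_1}^{p_1}\circ\cdots\circ\Phi_{T_k}^{p_k}({\bf \Delta_{T}}(I))$ by $Q_{\bf q}$ to obtain $Q_{\bf q}=\Phi_{T_1}^{q_1-d_1}\circ\cdots\circ\Phi_{T_k}^{q_k-d_k}(B)$ for ${\bf q}\geq{\bf d}$, deduces $\Phi_{T_i}(Q_{\bf q})=Q_{{\bf q}+\varepsilon_i}$ on that cone, and then shows by induction that every truncated defect $(id-\Phi_{T_1|_{\cH_0}}^{m_1+1})\circ\cdots\circ(id-\Phi_{T_k|_{\cH_0}}^{m_k+1})(I_{\cH_0})$ is an orthogonal projection (a sum of $Q_{\bf s}|_{\cH_0}$ over a box), so that trace equals rank at every finite stage and the asymptotic formulas for $\text{\rm curv}$ and $\chi$ coincide directly. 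You instead compute the quadratic form of ${\bf \Delta}_{{\bf T}|_{\cH_0}}^{\bf p}(I_{\cH_0})$ grade by grade, using the annihilation of words that drop below ${\bf d}$ together with the telescoping identity $\prod_{i\notin Z}(id-\Phi_{T_i})(I)=\sum_{(q_i)_{i\in Z}}\prod_{i\in Z}\Phi_{T_i}^{q_i}({\bf \Delta_{T}}(I))$, and then finish by verifying the orthogonality condition ${\bf \Delta}_{{\bf T}_0}(I)^{1/2}(T_0)^*_{\alpha}(T_0)_{\beta}{\bf \Delta}_{{\bf T}_0}(I)^{1/2}=0$ (unless $|\alpha_i|=|\beta_i|$ for all $i$) and invoking the remark following Theorem \ref{GBC}. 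Both endgames are legitimate and essentially encode the same ``projection implies trace equals rank'' mechanism; yours has the merit of reducing cleanly to an already-stated criterion and of explicitly checking positivity of all partial defects and purity of ${\bf T}|_{\cH_0}$, while the paper's identity $\Phi_{T_i}(Q_{\bf q})=Q_{{\bf q}+\varepsilon_i}$ gives the full family of truncated defects in one stroke. The only point you should make explicit is that each ${\bf \Delta}_{{\bf T}|_{\cH_0}}^{\bf p}(I_{\cH_0})$ commutes with the grading projections (it does, since ${\bf T}|_{\cH_0}$ is graded with respect to $U|_{\cH_0}$), so that your gradewise evaluation of the quadratic form actually determines the operator.
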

\begin{proof}

Assume that ${\bf T}$ is graded with respect to the gauge group
$$
U(\lambda_1,\ldots, \lambda_k)=\sum_{(s_1,\ldots, s_k)\in \ZZ^k} \lambda_1^{s_1}\cdots \lambda_k^{s_k} Q_{(s_1,\ldots, s_k)},\qquad
(\lambda_1\ldots, \lambda_k)\in \TT^k.
$$
The first part of the proof is similar to that of Theorem \ref{GBC}. Therefore, we can obtain that
$$
{\bf \Delta_{T}}(I)^{1/2} Q_{(s_1,\ldots, s_k)}= Q_{(s_1,\ldots, s_k)}     {\bf \Delta_{T}}(I)^{1/2},\qquad (s_1,\ldots, s_k)\in \ZZ^k.
$$
Since
${\bf \Delta_{T}}(I)^{1/2}$  is a positive finite rank operator  in the commutatnt of $\{Q_{(s_1,\ldots, s_k)}\}_{(s_1,\ldots, s_k)\in \ZZ^k}$,  we must have that
${\bf \Delta_{T}}(I)^{1/2} Q_{(s_1,\ldots, s_k)}=0$ for all but finitely many  $ (s_1,\ldots, s_k)\in \ZZ^k$. Consequently, there are ${\bf c}=(c_1,\ldots, c_k)\leq {\bf d}=(d_1,\ldots, d_k)$ in $\ZZ^k$ such that
\begin{equation} \label{decomp}
{\bf \Delta_{T}}(I)^{1/2}=\sum_{c_1\leq s_1\leq d_1, \ldots, c_k\leq s_k\leq d_k}
D_{(s_1,\ldots, s_k)}^{1/2},
\end{equation}
where $D_{(s_1,\ldots, s_k)}^{1/2}:={\bf \Delta_{T}}(I)^{1/2}Q_{(s_1,\ldots, s_k)}$.
 Taking into account that $\bigoplus_{s_1\geq c_1,\ldots, s_k\geq c_k} \cH_{(s_1, \ldots, s_k)}$ is invariant under each operator $T_{i,j}$ for  any $i\in \{1,\ldots, k\}$ and $j\in \{1,\ldots, n_i\}$, we have
\begin{equation} \label{inclu2}
T_{1,\beta_1}\cdots T_{k,\beta_k}{\bf \Delta_{T}}(I)^{1/2}\cH\subseteq  \bigoplus_{s_1\geq c_1,\ldots, s_k\geq c_k} \cH_{(s_1, \ldots, s_k)}
\end{equation}
for any $\beta_i\in \FF_{n_i}^+$ and  $ i\in \{1,\ldots,k\}$.
Since  ${\bf T}$ is a pure element in the polyball, the Berezin kernel ${\bf K_{T}}$ is an isometry and
\begin{equation}\label{range2}
\cH=\overline{\text{\rm range} \,{\bf K_{T}^*}}=
\overline{\text{\rm span}}\{ T_{1,\beta_1}\cdots T_{k,\beta_k}{\bf \Delta_{T}}(I)^{1/2}h:\ \beta_i\in \FF_{n_i}^+, h\in \cH\}.
\end{equation}
Hence, and using relation \eqref{inclu2},  we deduce that
$
\cH=\bigoplus_{s_1\geq c_1,\ldots, s_k\geq c_k} \cH_{(s_1, \ldots, s_k)}.
$
Due to relation \eqref{decomp}  and the fact  that
$
{\bf \Delta_{T}}(I)^{1/2}
$
has  finite rank, we have
$$
{\bf \Delta_{T}}(I)^{1/2} \cH\subseteq \bigoplus_{c_1\leq s_1\leq d_1, \ldots, c_k\leq s_k\leq d_k} \cM_{(s_1, \ldots, s_k)}\subseteq \bigoplus_{c_1\leq s_1\leq d_1, \ldots, c_k\leq s_k\leq d_k} \cH_{(s_1, \ldots, s_k)},
$$
where $\cM_{(s_1, \ldots, s_k)}:={\bf \Delta_{T}}(I)^{1/2} \cH_{(s_1, \ldots, s_k)}$ is finite dimensional.
On the other hand,
if $\beta_i\in \FF_{n_i}^+$ with $|\beta_i|=m_i\in \ZZ_+$ and $i\in \{1,\ldots, k\}$, we have
\begin{equation*}
T_{1,\beta_1}\cdots T_{k,\beta_k}{\bf \Delta_{T}}(I)^{1/2}\cH\subseteq  \bigoplus_{c_1+m_1\leq s_1\leq d_1+m_1, \ldots, c_k+m_k\leq s_k\leq d_k+m_k} \cH_{(s_1, \ldots, s_k)}
\end{equation*}
and $T_{1,\beta_1}\cdots T_{k,\beta_k}{\bf \Delta_{T}}(I)^{1/2}\cH$ is a finite dimensional subspace. Consequently, using relation \eqref{range2} and the fact that
$
\cH=\bigoplus_{s_1\geq c_1,\ldots, s_k\geq c_k} \cH_{(s_1, \ldots, s_k)},
$
we conclude that all the spectral spaces  $\cH_{(s_1, \ldots, s_k)}$ are finite dimensional.

Since ${\bf T}$ is a pure element in ${\bf B_n}(\cH)$, we have
$$
I=\sum_{p_1=0}^\infty\cdots \sum_{p_k=0}^\infty \Phi_{T_1}^{p_1}\circ \cdots \circ \Phi_{T_k}^{p_k}({\bf \Delta_{T}}(I)),
$$
where the convergence is in the weak operator topology. Hence, after multiplying to the left by $Q_{(q_1,\ldots,q_k)}$,  we obtain
\begin{equation} \label{Qq}
Q_{(q_1,\ldots, q_k)}=\sum_{p_1=0}^\infty\cdots \sum_{p_k=0}^\infty Q_{(q_1,\ldots, q_k)}\Phi_{T_1}^{p_1}\circ \cdots \circ \Phi_{T_k}^{p_k}({\bf \Delta_{T}}(I))
\end{equation}
for any $(q_1,\ldots, q_k)\in \ZZ^k$. On the other hand, taking into account relation \eqref{decomp},
we deduce that
\begin{equation}
\label{Qq2}
Q_{(q_1,\ldots, q_k)}\Phi_{T_1}^{p_1}\circ \cdots \circ \Phi_{T_k}^{p_k}({\bf \Delta_{T}}(I))
=\sum_{s_1=c_1}^{d_1}\cdots \sum_{s_k=c_k}^{d_k}
Q_{(q_1,\ldots, q_k)}\Phi_{T_1}^{p_1}\circ \cdots \circ \Phi_{T_k}^{p_k}(D_{(s_1,\ldots, s_k)})
\end{equation}
for any $(p_1,\ldots, p_k)\in \ZZ_+^k$.
Since ${\bf \Delta_{T}}(I)\leq I$, it is clear that
$D_{(s_1,\ldots, s_k)}\leq Q_{(s_1,\ldots, s_k)}$.  Taking into account that
\begin{equation*}
\begin{split}
T_{i,j} Q_{(s_1,\ldots, s_k)}=Q_{(s_1,\ldots,s_{i-1}, s_{i}+1, s_{i+1},\ldots, s_k)} T_{i,j}
\end{split}
\end{equation*}
for any $(s_1,\ldots, s_k)\in \ZZ^k$, $i\in \{1,\ldots, k\}$, and $j\in \{1,\ldots, n_i\}$,  we have
\begin{equation*}
\begin{split}
\Phi_{T_i}(Q_{(s_1,\ldots, s_k)})&=\sum_{j=1}^{n_i} T_{i,j}Q_{(s_1,\ldots, s_k)}T_{i,j}^*\\
&=Q_{(s_1,\ldots,s_{i-1}, s_{i}+1, s_{i+1},\ldots, s_k)}\Phi_{T_i}(I)Q_{(s_1,\ldots,s_{i-1}, s_{i}+1, s_{i+1},\ldots, s_k)}\\
&\leq Q_{(s_1,\ldots,s_{i-1}, s_{i}+1, s_{i+1},\ldots, s_k)}
\end{split}
\end{equation*}
for any $ i\in\{1,\ldots, k\}$.
Consequently, and using the inequality $D_{(s_1,\ldots, s_k)}\leq Q_{(s_1,\ldots, s_k)}$, we deduce that
\begin{equation}
\label{FIFI}
\Phi_{T_1}^{p_1}\circ \cdots \circ \Phi_{T_k}^{p_k}(D_{(s_1,\ldots, s_k)})\leq
\Phi_{T_1}^{p_1}\circ \cdots \circ \Phi_{T_k}^{p_k}(Q_{(s_1,\ldots, s_k)})\leq
Q_{(s_1+p_1,\ldots, s_k+p_k)}
\end{equation}
for any $(p_1,\ldots, p_k)\in \ZZ_+^k.$
Now, let  $(q_1,\ldots, q_k)\in \ZZ^k$ be  such that  $(q_1,\ldots, q_k)\geq (d_1,\ldots, d_k)$, Since $\{Q_{(s_1,\ldots, s_k)}\}$ are orthogonal projections, relation  \eqref{FIFI} implies
\begin{equation*}
\begin{split}
\sum_{p_1=0}^\infty\cdots& \sum_{p_k=0}^\infty
\sum_{s_1=c_1}^{d_1}\cdots \sum_{s_k=c_k}^{d_k}
Q_{(q_1,\ldots, q_k)}\Phi_{T_1}^{p_1}\circ \cdots \circ \Phi_{T_k}^{p_k}(D_{(s_1,\ldots, s_k)})\\
&=\sum_{s_1=c_1}^{d_1}\cdots \sum_{s_k=c_k}^{d_k}
\Phi_{T_1}^{q_1-s_1}\circ \cdots \circ \Phi_{T_k}^{q_k-s_k}(D_{(s_1,\ldots, s_k)})\\
&=
\Phi_{T_1}^{q_1-d_1}\circ \cdots \circ \Phi_{T_k}^{q_k-d_k}
\left(\sum_{s_1=c_1}^{d_1}\cdots \sum_{s_k=c_k}^{d_k}
\Phi_{T_1}^{d_1-s_1}\circ \cdots \circ \Phi_{T_k}^{d_k-s_k}(D_{(s_1,\ldots, s_k)})\right).
\end{split}
\end{equation*}
Due to relations \eqref{Qq}  and  \eqref{Qq2}, we obtain
\begin{equation}
\label{QFI}
Q_{(q_1,\ldots, q_k)}=\Phi_{T_1}^{q_1-d_1}\circ \cdots \circ \Phi_{T_k}^{q_k-d_k}(B),
\end{equation}
where $$B:=\sum_{s_1=c_1}^{d_1}\cdots \sum_{s_k=c_k}^{d_k}
\Phi_{T_1}^{d_1-s_1}\circ \cdots \circ \Phi_{T_k}^{d_k-s_k}(D_{(s_1,\ldots, s_k)}).
$$
Due to relation \eqref{QFI}, we deduce that
$$
\Phi_{T_i}\left(Q_{(q_1,\ldots, q_k)}\right)=
Q_{(q_1,\ldots,q_{i-1}, q_{i}+1, q_{i+1},\ldots, q_k)}
$$
for any $(q_1,\ldots, q_k)\geq (d_1,\ldots, d_k)$ and $i\in \{1,\ldots, k\}$.
Consider the subspace
$$\cH_0:=\bigoplus_{s_1\geq d_1,\ldots, s_k\geq d_k} Q_{(s_1,\ldots, s_k)}\cH
$$
and  let ${\bf T}|_{\cH_0}:=(T_1|_{\cH_0},\ldots T_k|_{\cH_0})$ with
$T_i|_{\cH_0}:=(T_{i,1}|_{\cH_0},\ldots, T_{i,n_i}|_{\cH_0})$ for $i\in \{1,\ldots, k\}$.
  An inductive argument shows that, for any $(m_1,\ldots, m_k)\in \ZZ_+^k$,
\begin{equation*}
\begin{split}
&(id-\Phi_{T_1|_{\cH_0}}^{m_1+1})\circ\cdots \circ (id-\Phi_{T_k|_{\cH_0}}^{m_k+1})(I_{\cH_0})\\
&=
(id-\Phi_{T_2|_{\cH_0}}^{m_2+1})\circ\cdots \circ (id-\Phi_{T_k|_{\cH_0}}^{m_k+1})
\left(\sum_{d_1\leq s_1\leq d_1+m_1}\sum_{s_2=d_2}^\infty\cdots \sum_{s_k=d_k}Q_{(s_1,s_2,\ldots, s_k)}|_{\cH_0}\right) \\
&=\cdots=(id-\Phi_{T_k|_{\cH_0}}^{m_k+1})
\left(\sum_{d_1\leq s_1\leq d_1+m_1}\cdots  \sum_{d_{k-1}\leq s_{k-1}\leq d_{k-1}+m_{k-1}} \sum_{s_k=d_k}Q_{(s_1,s_2,\ldots,s_{k-1}, s_k)}|_{\cH_0}\right)\\
&= \sum_{d_1\leq s_1\leq d_1+m_1}\cdots  \sum_{d_{k}\leq s_{k}\leq d_{k}+m_{k}}  Q_{(s_1, \ldots,  s_k)}|_{\cH_0}.
\end{split}
\end{equation*}
In particular,  for $m_1=\cdots=m_k=1$, we have
\begin{equation*}
 (id-\Phi_{T_1|_{\cH_0}})\circ\cdots \circ (id-\Phi_{T_k|_{\cH_0}})(I_{\cH_0}) =Q_{(d_1,d_2,\ldots,d_{k})}|_{\cH_0}\geq 0,
 \end{equation*}
which shows  that ${\bf T}|_{\cH_0}\in {\bf B_n}(\cH_0)$ and has finite rank.
On the other hand, due to the relations above,  the operator $(id-\Phi_{T_1|_{\cH_0}}^{m_1+1})\circ\cdots \circ (id-\Phi_{T_k|_{\cH_0}}^{m_k+1})(I_{\cH_0})$ is an orthogonal projection
for any $(m_1,\ldots, m_k)\in \ZZ_+^k$. Therefore,
$$
\frac{\text{\rm trace}\, \left[(id-\Phi_{T_1}^{m_1+1})\circ \cdots \circ (id-\Phi_{T_k}^{m_k+1})(I)\right]}
{\prod_{i=1}^k(1+n_i+\cdots + n_i^{m_i})}=\frac{\rank \left[(id-\Phi_{T_1}^{m_1+1})\circ \cdots \circ (id-\Phi_{T_k}^{m_k+1})(I)\right]}
{\prod_{i=1}^k(1+n_i+\cdots + n_i^{m_i})}.
$$
 Consequently, applying   Theorem \ref{Euler1} and  using \cite{Po-curvature-polyballs} (see Theorem 1.3 and Corollary 1.4),  we deduce  that
$
\text{\rm curv}\, ({\bf T}|_{\cH_0})=\chi({\bf T}|_{\cH_0}).
$
This completes  the proof.
\end{proof}

       %

      \end{document}